\newtheorem{thm}{Theorem}[section]
\newtheorem{lem}[thm]{Lemma}
\newtheorem{pro}[thm]{Proposition}
\newtheorem{cor}[thm]{Corollary}
\newtheorem{conj}[thm]{Conjecture}
\theoremstyle{definition}
\newtheorem{de}[thm]{Definition}
\theoremstyle{remark}
\newtheorem{rmk}[thm]{Remark}
\numberwithin{equation}{section}
\def\subsection{\@startsection{subsection}{2}%
  \z@{.5\linespacing\@plus.7\linespacing}{.5\linespacing}%
  {\normalfont\bfseries}}
\let\leq\leqslant
\let\geq\geqslant
\newcommand{\st}{\mid}
\newcommand{\stmod}[1]{\mathsf{stmod}(#1)}
\newcommand{\StMod}[1]{\mathsf{StMod}(#1)}
\newcommand{\modu}[1]{\mathsf{mod}(#1)}
\newcommand{\Mod}[1]{\mathsf{Mod}(#1)}
\newcommand{\sfS}{\mathsf{S}}
\newcommand{\sfT}{\mathsf{T}}
\newcommand{\sHom}{\underline{\mathrm{Hom}}}
\newcommand{\Hom}{{\mathrm{Hom}}}
\newcommand{\PHom}{{\mathrm{PHom}}}
\newcommand{\smd}[1]{\langle #1 \rangle}
\newcommand{\thick}[1]{\mathsf{Thick}\langle #1 \rangle}
\newcommand{\thickC}[2]{\mathsf{Thick}_{#1}\langle #2 \rangle}
\newcommand{\loc}[1]{\mathsf{Loc}\langle #1 \rangle}
\newcommand{\locC}[2]{\mathsf{Loc}_{#1}\langle #2 \rangle}
\newcommand{\id}{\mathrm{id}}
\newcommand{\res}{\mathrm{res}}
\newcommand{\len}{\mathrm{len}}
\newcommand{\Flen}{\mathrm{len}^{\mathrm{F}}}
\newcommand{\gl}{\mathrm{gl}}
\newcommand{\sgl}{\mathrm{sgl}}
\newcommand{\stgl}{\mathrm{stgl}}
\newcommand{\rl}{\text{rad\,len}}
\newcommand{\up}{{\uparrow}}
\newcommand{\down}{{\downarrow}}
\newcommand{\iso}{\cong}
\newcommand{\lra}{\longrightarrow}
\newcommand{\llra}[1]{\stackrel{#1}{\lra}}  
\newcommand{\im}{\mathrm{im}}
\newcommand{\soc}{\mathrm{soc}}
\newcommand{\rad}{\mathrm{rad}}
\newcommand{\F}{\mathbb{F}}
\newcommand{\bP}{\mathbb{P}}
\newcommand{\bQ}{\mathbb{Q}}
\newcommand{\bS}{\mathbb{S}}
\newcommand{\Z}{\mathbb{Z}}
\newcommand{\cF}{\mathcal{F}}
\newcommand{\cG}{\mathcal{G}}
\newcommand{\cI}{\mathcal{I}}
\newcommand{\cJ}{\mathcal{J}}
\newcommand{\cP}{\mathcal{P}}
\newcommand{\cQ}{\mathcal{Q}}
\newcommand{\cS}{\mathcal{S}}
\newcommand{\stF}{\mathrm{st}\mathcal{F}}
\newcommand{\stG}{\mathrm{st}\mathcal{G}}
\newcommand{\sG}{\mathrm{s}\mathcal{G}}
\tikzstyle{pre}=[<-,shorten <=2pt,shorten >=1.3pt,>=stealth,semithick]
\tikzstyle{post}=[->,shorten >=2pt,shorten <=1.3pt,>=stealth,semithick]
\tikzstyle{dot}=[circle, draw, fill=black!50, inner sep=0pt, minimum width=3pt]
\tikzstyle{X}=[above left=-2.5pt,font=\footnotesize]
\tikzstyle{Y}=[above right=-2.5pt,xshift=-1.8pt,font=\footnotesize]
\newcommand{\Dstartt}[1]{\path (#1) node[dot] {}}
\newcommand{\Dend}{;}
\newcommand{\Draw}[4]{+#3 node[dot](next) {} +(0,0) edge[#2] node[#4] {#1} (next.center) ++#3}
\newcommand{\Dnext}[3]{\Draw{#1}{#2}{#3}{#1}}
\newcommand{\DXo}[1]{\Dnext{}{post}{#1}}
\newcommand{\DYo}[1]{\Dnext{}{post}{#1}}
\newcommand{\Dxo}[1]{\Dnext{}{pre}{#1}}
\newcommand{\Dyo}[1]{\Dnext{}{pre}{#1}}
\newcommand{\DX} {\DXo{(-.7,-1)}}
\newcommand{\Dx} {\Dxo{( .7, 1)}}
\newcommand{\Dxw}{\Dxo{(  1, 1)}}
\newcommand{\DY} {\DYo{( .7,-1)}}
\newcommand{\DYw}{\DYo{(  1,-1)}}
\newcommand{\Dy} {\Dyo{(-.7, 1)}}
\newcommand{\blank}{-}
\newcommand{\dfn}[1]{\textbf{\boldmath{#1}}}
\newcommand{\xqedhere}[1]{\rlap{\hbox to#1{\hfil\llap{\ensuremath{\qed}}}}}
\ifpdf  \usepackage[pdftex,bookmarks=false]{hyperref}
\else   \usepackage[hypertex]{hyperref}
\begin{document}

\title{Ghost numbers of Group Algebras II}

\author{J. Daniel Christensen}
\address{Department of Mathematics\\
University of Western Ontario\\
London, ON N6A 5B7, Canada}
\email{jdc@uwo.ca}

\author{Gaohong Wang}
\email{gwang72@uwo.ca}

\subjclass[2010]{Primary 20C20; Secondary 18E30, 20J06, 55P99}

\date{February 19, 2015}

\keywords{Tate cohomology, stable module category, generating hypothesis, ghost map}

\begin{abstract}
We study several closely related invariants of the group algebra $kG$ of a finite group.
The basic invariant is the ghost number, which measures the failure of the generating hypothesis
and involves finding non-trivial composites of maps each of which induces the zero map 
in Tate cohomology (``ghosts'').
The related invariants are the simple ghost number, which considers maps which are
stably trivial when composed with any map from a suspension of a simple module,
and the strong ghost number, which considers maps which are ghosts after restriction
to every subgroup of $G$.
We produce the first computations of the ghost number for non-$p$-groups, 
e.g., for the dihedral groups at all primes, as well as many new bounds.
We prove that there are close relationships between the three invariants,
and make computations of the new invariants for many families of groups.
\end{abstract}

\maketitle

\tableofcontents

\newpage

\section{Introduction}\label{se:intro}

In this paper, we study several closely related invariants of a group algebra $kG$,
where $G$ is a finite group, and $k$ is a field whose characteristic $p$
divides the order of $G$.
To describe these invariants, we work in the stable module
category $\StMod{kG}$, which is the triangulated category formed from
the category of $kG$-modules by killing the maps that factor through a projective.
A map $f$ in $\StMod{kG}$ is called a \dfn{ghost} if it induces the zero map
in Tate cohomology, or equivalently, if $\sHom(\Omega^i k, f) = 0$ for each $i \in \Z$.
Our most basic invariant is the \dfn{ghost number} of $kG$, which is the
smallest $n$ such that every composite of $n$ ghosts in $\thick k$ is zero.
Here $\thick k$ denotes the thick subcategory generated by the trivial module.
When there are no non-trivial ghosts in $\thick k$ (so $n = 1$), we say that the 
\dfn{generating hypothesis} holds for $kG$.
This is motivated by Freyd's generating hypothesis in stable homotopy theory~\cite{freydGH},
which is still an open question.
In a series of papers~\cite{GH for p,GH split,admit,GH per} (with a minor correction made below),
it has been shown that the generating hypothesis holds for $kG$ if and only
if the Sylow $p$-subgroup of $G$ is $C_2$ or $C_3$.
However, computing the ghost number in cases where it is larger than one has proven to be difficult.  
Some preliminary work was done in~\cite{Gh in rep}, where the ghost numbers
of cyclic $p$-groups were computed, and various upper and lower bounds were
obtained in other cases.
Substantial progress was made in our previous paper~\cite{Gh num}, where we
computed the ghost numbers of $k(C_3 \times C_3)$ and other algebras of wild representation
type, as well as the ghost numbers of dihedral $2$-groups, the first non-abelian computations.

In the present paper, we extend the past work in two different ways.
Our initial motivation was to produce the first computations of ghost numbers for non-$p$-groups.
For a $p$-group, $\thick k$ coincides with $\stmod{kG}$, the full subcategory
of finitely generated modules, which allows one to use induction from a subgroup to produce
ghosts in $\thick k$.
But for a general $p$-group, $\thick k$ is usually a proper subcategory of $\stmod{kG}$,
which makes things more delicate.
Nevertheless, we obtain a variety of exact computations of ghost numbers in this
setting, e.g., for all dihedral groups at all primes, as well as new bounds.
One of our new techniques is to produce ghosts for $kG$ by inducing
up a ghost from a subgroup and then projecting onto the principal block.
We show that this composite is faithful, 
and so when $\thick k$ coincides with the principal block of $\stmod{kG}$,
we are able to use this technique to study the ghost number of $kG$.
As an example, we prove that the ghost number is finite in this situation.
Our main results on ghost numbers are described in the detailed summary below.

Our work on non-$p$-groups led us to realize the importance of another
invariant in this setting, which is the \dfn{simple ghost number}, a
concept suggested in~\cite{GH general}.
A \dfn{simple ghost} is a map $f$ such that $\sHom(\Omega^i S, f) = 0$ for each
simple module $S$ and each $i \in \Z$, 
and the \dfn{simple ghost number} of $kG$ is the smallest $n$ such 
that every composite of $n$ simple ghosts in $\stmod{kG}$ is trivial.
The point here is that $\stmod{kG}$ is the thick subcategory generated
by the simple modules, so this is exactly analogous to the ghost number,
with the trivial module $k$ replaced by the set of all simple modules.
Moreover, for a $p$-group, $k$ is the only simple
module, so the two notions coincide.
In turns out that there is a close relationship between the simple
ghost number of $kG$ and the ghost number of $kP$, where $P$ is a
Sylow $p$-subgroup of $G$, and by studying both invariants at once
we can make many more computations.
Again, these are described in the detailed summary below.

One of the most important techniques in our work is the use of
induction and restriction, which brings us to the third and final
invariant that we study in this paper.
A \dfn{strong ghost} is a map $f$ whose restriction to every subgroup is a ghost,
or equivalently, such that $\sHom(\Omega^i k\up_H^G, f) = 0$ for each subgroup
$H$ of $G$ and each $i \in \Z$.
The \dfn{strong ghost number} of $kG$ is the smallest $n$ such that every composite
of $n$ strong ghosts in $\stmod{kG}$ is trivial.
This follows the same pattern as above, since $\stmod{kG}$ is the thick
subcategory generated by the test objects $k\up_H^G$.
Unlike the other invariants, one can show that the strong ghost number of $kG$
equals the strong ghost number of $kP$, and so it suffices to study $p$-groups.
Below we summarize our computations of and bounds on strong ghost numbers.

\medskip

The overall organization of the paper is as follows. 
In Section~\ref{se:background}, we introduce general concepts that will
be of use in the rest of the paper and recall some background material
on modular representation theory.
Sections~\ref{se:normal-Sylow}, \ref{se:princ-block-gen-by-k} and~\ref{se:cyclic-Sylow}
study both the ghost number and the simple ghost number, and
are distinguished by the assumptions placed on the group:
In Section~\ref{se:normal-Sylow}, we assume that the Sylow $p$-subgroup of $G$ is normal.
In Section~\ref{se:princ-block-gen-by-k}, we assume that $\thick k$ coincides with
the principal block.
And in Section~\ref{se:cyclic-Sylow}, we assume that the Sylow $p$-subgroup is cyclic.
Finally, in Section~\ref{se:strong-ghosts}, we study the strong ghost number.

Note that there is some overlap in the assumptions made in
Sections~\ref{se:normal-Sylow}, \ref{se:princ-block-gen-by-k} and~\ref{se:cyclic-Sylow}.
For example, in Section~\ref{ss:direct product} we study groups whose Sylow $p$-subgroup
is a direct factor, and these groups satisfy the assumptions of
Sections~\ref{se:normal-Sylow} and~\ref{se:princ-block-gen-by-k}.
This includes the case of $p$-groups.
And in Section~\ref{ss:normal-P}, we study groups with a cyclic normal Sylow $p$-subgroup,
and these satisfy the assumptions of all three sections.
In general, the assumptions made are independent, except that Sunil Chebolu and Jan Min\'{a}\v{c}
have an unpublished proof that when the Sylow $p$-subgroup is cyclic, $\thick k$
coincides with the principal block.
(This may be one of those results that is ``known to the experts''.)


\medskip

We now summarize the main results of each section in more detail.
In Section~\ref{ss:length-general}, working in a general triangulated category,
we define the Freyd length and Freyd number with respect to
a set $\bP$ of test objects.  The Freyd number generalizes the ghost number,
simple ghost number and strong ghost number defined above.
We also recall the closely related concept of length with respect to a 
projective class, and we prove general results about both of these invariants.
In Section~\ref{ss:stmod}, we recall the basics of the stable module category,
and in Section~\ref{ss:gh_classes} we formally introduce ghosts and simple
ghosts, specializing the Freyd length and Freyd number to these two situations.

In Section~\ref{se:normal-Sylow} we assume that our group $G$ has a normal Sylow $p$-subgroup $P$.
Under this assumption, in Section~\ref{ss:simple-class-normal} we show that a
map in $\StMod{kG}$ is a simple ghost if and only if its restriction to $P$ is a ghost,
and show that the simple ghost number of $kG$ is equal to the ghost number of $kP$.
It follows that when $P$ is normal, the simple generating hypothesis holds if and only if
$P$ is $C_2$ or $C_3$.
(We don't have a characterization of when the simple generating hypothesis holds in 
general, but we do know that it does not depend only on the Sylow $p$-subgroup.
See Section~\ref{ss:SL(2,p)}.)
In Section~\ref{ss:A_4}, we apply this result to the group $A_4$ at the prime $2$,
deducing that the simple ghost number is $2$ and that the ghost number is between
$2$ and $4$.
We also give an example of a ghost for $A_4$ whose restriction to the Sylow $p$-subgroup
is not a ghost.

In Section~\ref{se:princ-block-gen-by-k}, we focus on groups whose principal
block is generated by $k$ in the sense that $\stmod{B_0} = \thick k$ 
(or, equivalently, $\StMod{B_0} = \loc k$).
We show that this holds when the Sylow $p$-subgroup $P$ is a direct factor, 
in Section~\ref{ss:direct product}, using a result that shows that there is
an equivalence between $\stmod{kP}$ and $\thickC G k$.
This last result corrects an error in~\cite{GH per};
see the comments after Theorem~\ref{th:products}.
In Section~\ref{ss:B_0}, we show that if $\stmod{B_0} = \thick k$, 
then the ghost number of $kG$ is finite.
We prove this by using a comparison to the simple ghost number, which is finite for any $G$.
We conjecture that the ghost number is finite for general $G$.
This is related to a question proposed in~\cite{BCR}. (See Remark~\ref{rm:finite-ghost-number}.)
Still assuming that the principal block is generated by $k$, 
we show that the ghost number of $kG$ is greater than or equal
to the ghost number of $kP$, by first showing that the composite of
inducing up from $P$ to $G$ followed by projection onto the principal block
is faithful.
In Section~\ref{ss:dihedral}, working at the prime $2$,
we show that for a dihedral group $D_{2ql}$ of order $2ql$, with $q$ a power of $2$ and $l$ odd,
the principal block is generated by $k$ and the ghost number of $kD_{2ql}$ is
equal to the ghost number of the Sylow $2$-subgroup $D_{2q}$, which was shown
to be $\lfloor \frac{q}{2}+1 \rfloor$ in~\cite{Gh num}.
By computing the simple ghost lengths of modules in non-principal blocks,
we are also able to show that the simple ghost number of $kD_{2ql}$ is
again $\lfloor \frac{q}{2}+1 \rfloor$.

Section~\ref{se:cyclic-Sylow} studies the case when the Sylow $p$-subgroup $P$ is cyclic.
In Section~\ref{ss:normal-P}, we assume that $P$ is cyclic and normal,
and show that every simple module in the principal block is a suspension of the trivial module.  
It follows that $\stmod{B_0} = \thick k$ and that a map in $\thick k$ is a ghost
if and only if it is a simple ghost. 
Thus the simple ghost number of $kG$, the ghost number of $kG$ and the
ghost number of $kP$ are all equal.  Since $P$ is a cyclic $p$-group,
its ghost number is known~\cite{Gh in rep}.
In particular, this allows us to compute the ghost numbers of the dihedral groups
at an odd prime.
Combined with the results above,
this completes the computation of the ghost numbers of the dihedral groups, at any prime.
The group $SL(2, p)$ has a cyclic Sylow $p$-subgroup $P$, but it is not normal.
By studying the normalizer $L$ of $P$ and applying the results of Section~\ref{ss:normal-P} to $L$,
we show in Section~\ref{ss:SL(2,p)} that the simple generating hypothesis holds for $SL(2,p)$
over a field $k$ of characteristic $p$.
Along the way, we find that there is an equivalence $\stmod{kG} \to \stmod{kL}$,
but that the simple generating hypothesis does \emph{not} hold for $kL$.

In Section~\ref{se:strong-ghosts} we study strong ghosts.
We begin in Section~\ref{ss:sgh class} by showing that the strong ghost number of a
group algebra $kG$ equals the strong ghost number of $kP$, 
where $P$ is a Sylow $p$-subgroup of $G$.
Then we compute the strong ghost numbers of cyclic $p$-groups in Section~\ref{ss:sgn-C_p^r}.
Finally, in Section~\ref{ss:sgn-D_4q}, we show that the strong ghost number of a
dihedral $2$-group $D_{4q}$ is between $2$ and $3$, with the upper bound being
the non-trivial result.

\section{Background}\label{se:background}

In this section, we provide background material that will be used throughout the paper.
In Section~\ref{ss:length-general}, we define invariants of a triangulated category $\sfT$
which depend on a set $\bP$ of test objects, and prove general results about these invariants.
In Section~\ref{ss:stmod}, we recall some background results about the stable module
category of a finite group.
In Section~\ref{ss:gh_classes}, we apply the general theory to two sets of test objects
in the stable module category of a group, giving rise to invariants called the ghost number and
the simple ghost number.

\subsection{The generating hypothesis and related invariants}\label{ss:length-general}

We begin this section by stating the generating hypothesis with respect to 
a set of objects in a triangulated category and defining invariants, the 
Freyd length and the length, which measure the degree to which the 
generating hypothesis fails.
Motivated by this, we recall the definition of a projective class.
Then, working in a general triangulated category, we study the relationship 
between the lengths (and Freyd lengths) of an object with respect to different projective classes.
We also compare lengths in different categories by using the pullback projective class.

Let $\sfT$ be a triangulated category, and let $\bP$ be a set of objects in $\sfT$.
The thick subcategory generated by $\bP$, denoted $\thick {\bP}$,
is the smallest full triangulated subcategory of $\sfT$ 
that is closed under retracts and contains $\bP$.
It is easy to see that $\bP$ detects zero objects in $\thick {\bP}$, i.e.,
if $M \in \thick {\bP}$ and $[\Sigma^i \bP, M] = 0$ for all $P \in \bP$ and $i \in \Z$,
then $M \iso 0$.
Here we write $[\blank, \blank]$ for the hom-sets in $\sfT$.

The \dfn{generating hypothesis} for the set of test objects $\bP$
is the statement that
$\bP$ detects trivial \emph{maps} in $\thick \bP$, i.e.,
if $f$ is a map in $\thick {\bP}$ and $[\Sigma^i P, f] = 0$ for all $P \in \bP$ and $i \in \Z$,
then $f$ is the zero map~\cite{GH general}.

When the generating hypothesis for $\bP$ fails, there is a natural
invariant which measures the degree to which it fails.
Let $\cI$ denote the class of maps $f$ such that $[\Sigma^i P, f] = 0$ for all $P \in \bP$ and $i \in \Z$,
and write $\cI_t$ for such maps in $\thick \bP$.
The \dfn{Freyd length} $\Flen_{\bP}(X)$ of an object $X$ in $\thick \bP$ with respect to $\bP$
is the smallest number $n$ such that every composite $X \to X_1 \to \cdots \to X_n$
of $n$ maps in $\cI_t$ is zero.  
The \dfn{Freyd number} of $\sfT$ with respect to $\bP$ is the least upper bound of 
the Freyd lengths of the objects in $\thick \bP$.
With this terminology, the generating hypothesis holds for $\bP$ if and only if 
the Freyd number of $\sfT$ with respect to $\bP$ is 1.

It turns out to be fruitful to consider a related invariant, where none of the
objects are required to lie in $\thick \bP$.
The \dfn{length} $\len_{\bP}(X)$ of an object $X$ in $\sfT$ with respect to $\bP$
is the smallest number $n$ such that every composite $X \to X_1 \to \cdots \to X_n$
of $n$ maps in $\cI$ is zero, if this exists (which is the case when $X \in \thick \bP$).
This is clearly at least as big as the Freyd length, but has better formal
properties which make it easier to work with.
These properties are best expressed in terms of the \emph{projective class} generated by $\bP$.
To motivate the definition, note that $\smd{\bP}$ detects the same maps in 
$\sfT$ as $\bP$ does, 
where $\smd{\bP}$ denotes the closure of $\bP$ under retracts, sums,
suspensions and desuspensions.
Moreover, it is easy to show~(\cite{Chr}) that $\cP := \smd{\bP}$ and $\cI$ determine 
each other in the sense of the following definition:

\begin{de}
Let $\sfT$ be a triangulated category. 
A \dfn{projective class} in $\sfT$ consists of a class $\cP$ of objects of $\sfT$
and a class $\cI$ of morphisms of $\sfT$ such that:
\begin{enumerate}[(i)]
\item $\cP$ consists of exactly the objects $P$ such that every composite
      $P \to X \to Y$ is zero for each $X \to Y$ in $\cI$,
\item $\cI$ consists of exactly the maps $X \to Y$ such that every composite
      $P \to X \to Y$ is zero for each $P$ in $\cP$,
\item for each $X$ in $\sfT$, there is a triangle $P \to X \to Y \to \Sigma P$ with 
      $P$ in $\cP$ and $X \to Y$ in $\cI$.
\end{enumerate}
\end{de}

Our main examples will be projective classes of the form $(\smd{\bP}, \cI)$,
which we call the \dfn{(stable) projective class generated by $\bP$}.

Given a projective class $(\cP, \cI)$, there is a sequence of 
\dfn{derived projective classes} $(\cP_n,\cI^n)$~\cite{Chr}.
The ideal $\cI^n$ consists of all $n$-fold composites of maps in $\cI$, 
and $X$ is in $\cP_n$ if and only if it is a retract of an object $M$ that
sits inside a triangle $P \to M \to Q \to \Sigma P$ with $P \in \cP_1=\cP$ and 
$Q \in \cP_{n-1}$. 
For $n=0$, we let $\cP_0$ consist of all zero objects and $\cI^0$ consist of all
maps in $\sfT$. 

Extending the definition above to any projective class, we define
the \dfn{length} $\len_{\cP}(X)$ of an object $X$ in $\sfT$ with respect to $(\cP, \cI)$
to be the smallest number $n$ such that every map in $\cI^n$
with domain $X$ is trivial.
The fact that each pair $(\cP_n, \cI^n)$ is a projective class implies that
the length of $X$ is equal to the smallest $n$ such that $X \in \cP_n$.
When $\cP = \smd{\bP}$, we write $\len_{\bP}(X)$ as above.

We note that different sets of objects can generate the same projective
class but different thick subcategories, so the Freyd length depends on
the choice of generating set $\bP$, 
not just on the projective class $\smd{\bP}$ it generates.

The following lemma is a direct consequence of the definition of a projective class.
This idea is used in comparing the ghost length and the simple ghost length of a module.

\begin{lem}\label{le:lengths}
Let $\sfT$ be a triangulated category, and
let $(\cP,\cI)$ and $(\cQ,\cJ)$ be projective classes in $\sfT$.
Then we have the following relationships:
\begin{itemize}
\item
If $M$ has finite length with respect to $(\cP,\cI)$, then
\[ \len_{\cP_n} (M) = \bigg\lceil \frac{\len_{\cP} (M)}{n} \bigg\rceil. \]

\item
If $\cQ \subseteq \cP$, then
\[ \len_{\cP}(M) \leq \len_{\cQ}(M). \]

\item
If $\cQ \subseteq \cP_n$, then
\[ \len_{\cP} (M) \leq n \, \len_{\cP_n} (M) \leq n \, \len_{\cQ}(M).\]
\end{itemize}
\end{lem}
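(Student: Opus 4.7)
The plan is to read all three statements through the ideals associated with the projective classes and to exploit the identification $\len_\cP(M) = \min\{n : M \in \cP_n\} = \min\{n : \text{every map in } \cI^n \text{ with domain } M \text{ is zero}\}$ given in the text just above the lemma.

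For (i), the key observation is that the iterated projective class of $(\cP_n, \cI^n)$ coincides with $(\cP_{nk}, \cI^{nk})$. Indeed, the $k$-th derived ideal $(\cI^n)^k$ consists of $k$-fold composites of $n$-fold composites of maps in $\cI$, which is exactly $\cI^{nk}$; since a projective class is determined by its ideal, this gives $(\cP_n)_k = \cP_{nk}$. Once this is in hand, $\len_{\cP_n}(M)$ is the smallest $k$ with $M \in \cP_{nk}$, which is the smallest $k$ with $nk \ge \len_\cP(M)$, namely $\lceil \len_\cP(M)/n\rceil$.

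For (ii), I would translate the hypothesis $\cQ \subseteq \cP$ into the containment $\cI \subseteq \cJ$ of the associated ideals: if $f \in \cI$ then every composite $P \to X \to Y$ with $P \in \cP$ is zero, so in particular this holds for $P \in \cQ$, hence $f \in \cJ$. Consequently $\cI^k \subseteq \cJ^k$ for every $k$. If every $k$-fold composite in $\cJ$ starting at $M$ is zero, the same is true for every $k$-fold composite in $\cI$, yielding $\len_\cP(M) \le \len_\cQ(M)$.

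For (iii), apply (ii) to the pair of projective classes $(\cP_n, \cI^n)$ and $(\cQ, \cJ)$, where the hypothesis $\cQ \subseteq \cP_n$ is exactly what is needed; this gives $\len_{\cP_n}(M) \le \len_\cQ(M)$. Then multiply by $n$ and use part (i) in the form $n\lceil \len_\cP(M)/n\rceil \ge \len_\cP(M)$ to chain together the full inequality $\len_\cP(M) \le n\,\len_{\cP_n}(M) \le n\,\len_\cQ(M)$. None of the three steps appears genuinely hard; the only point requiring care is the identity $(\cI^n)^k = \cI^{nk}$ that drives (i), and even this is essentially definitional once one remembers that a projective class is determined by its ideal (so that the inductively defined $\cP_{nk}$ must agree with the projectives of the iterated class).
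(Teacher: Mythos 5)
Your proposal is correct and follows essentially the same route as the paper: both arguments rest on the identification $(\cI^n)^m = \cI^{nm}$ (equivalently $(\cP_n)_m = \cP_{nm}$) together with the translation between $M \in \cP_j$ and the vanishing of $\cI^j$ out of $M$, the only cosmetic difference being that the paper phrases part (i) as two separate inequalities obtained from the two inclusions rather than asserting the equality outright.
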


\begin{proof}
To show $\len_{\cP_n} (M) = \bigg\lceil \frac{\len_{\cP} (M)}{n} \bigg\rceil$,
we actually need to prove two inequalities:
\begin{equation}\label{eq:ineq}
 \len_{\cP_n} (M) \leq \bigg\lceil \frac{\len_{\cP} (M)}{n} \bigg\rceil
  \quad\text{and}\quad
  \len_{\cP} (M) \leq n \, \len_{\cP_n} (M).
\end{equation}

For the second inequality, let $\len_{\cP_n} (M) = m$.  
Then $M \in (\cP_n)_m \subseteq \cP_{mn}$,
which means that $\len_{\cP} (M) \leq mn$.
Equivalently, we can prove the inequality 
using the inclusion $\cI^{mn} \subseteq (\cI^n)^m$, i.e.,
if every $m$-fold composite of $n$-fold composites of maps in $\cI$ out of $M$ is trivial, 
then every $mn$-fold composite of maps in $\cI$ out of $M$ is trivial.

Using the inclusions the other way, i.e.,
$(\cP_n)_m \supseteq \cP_{mn}$ and $\cI^{mn} \supseteq (\cI^n)^m$,
one can prove that $\len_{\cP_n} (M) \leq \big\lceil \len_{\cP} (M) / n \big\rceil$.

The other inequalities in the lemma follow with similar proofs.
\end{proof}

The analog of Lemma~\ref{le:lengths} for Freyd lengths is a bit more
subtle because of the need to take into account the appropriate
thick subcategories.  
For example, if $(\smd{\bP}, \cI)$ and $(\smd{\bQ}, \cJ)$ are projective
classes and $\bQ \subseteq \bP$, then clearly $\cI \subseteq \cJ$.
But the inclusion $\thick \bQ \subseteq \thick \bP$ goes in the other
direction, so in general there is no inclusion between $\cI_t$ and $\cJ_t$.

Nevertheless, if we include assumptions which control the thick subcategories,
then most of the results go through.
We simply work with $(\cI_t)^n$ instead of $\cI^n$.
However, one difference is that we only have an inclusion
$(\cI_t)^{mn} \subseteq ((\cI^n)_t)^m$,
rather than an equality, and
as a result, we lose the first inequality from equation~\eqref{eq:ineq}.
In the next lemma, we give a result which we will use later.

\begin{lem}\label{le:Flengths}
Let $\sfT$ be a triangulated category, and
let $(\smd\bP,\cI)$ and $(\smd\bQ,\cJ)$ be projective classes in $\sfT$
generated by sets $\bP$ and $\bQ$.
If $\bP \subseteq \thick \bQ$, $\bQ \subseteq \smd{\bP}_n$
and $M \in \thick \bP$, then
\[ \Flen_{\bP} (M) \leq n \, \Flen_{\bQ} (M) . \]
\end{lem}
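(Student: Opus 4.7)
The plan is to reduce an $nm$-fold composite of maps in $\cI_t$ to an $m$-fold composite of maps in $\cJ_t$, where $m := \Flen_{\bQ}(M)$, so that vanishing of the latter forces vanishing of the former.

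First I would unpack the two hypotheses. From $\bP \subseteq \thick \bQ$ we immediately get $\thick \bP \subseteq \thick \bQ$, so $M$ lies in $\thick \bQ$ and $\Flen_{\bQ}(M)$ is a well-defined integer $m$. From $\bQ \subseteq \smd{\bP}_n = \cP_n$ I would extract the ideal inclusion $\cI^n \subseteq \cJ$: since $(\cP_n,\cI^n)$ is a projective class, every object of $\cP_n$ kills every map in $\cI^n$ under $[\Sigma^i\blank,\blank]$, so any $f \in \cI^n$ automatically satisfies $[\Sigma^i Q, f] = 0$ for each $Q \in \bQ$, which is precisely the condition $f \in \cJ$.

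Next I would take an arbitrary composite $M \to X_1 \to \cdots \to X_{nm}$ of $nm$ maps in $\cI_t$ and group the factors into $m$ blocks of $n$ consecutive maps, yielding maps $f_1,\ldots,f_m$ with $f_j : X_{(j-1)n} \to X_{jn}$ and whose overall composite equals the original one. Each $f_j$ is an $n$-fold composite of maps in $\cI$, hence $f_j \in \cI^n \subseteq \cJ$. Moreover, every intermediate $X_i$ lies in $\thick \bP \subseteq \thick \bQ$, so $f_j$ is a map in $\thick \bQ$; thus $f_j \in \cJ_t$. Then $f_m \circ \cdots \circ f_1$ is an $m$-fold composite of maps in $\cJ_t$ starting at $M$, and by the definition of $m = \Flen_{\bQ}(M)$ it vanishes. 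This yields $\Flen_{\bP}(M) \leq nm = n\,\Flen_{\bQ}(M)$.

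I do not expect a serious obstacle here; the only real subtlety is the one the paper has just flagged, namely that $\cI_t$ and $\cJ_t$ are in general incomparable. The two hypotheses are exactly what is needed to push the thick subcategories in the right direction ($\thick \bP \subseteq \thick \bQ$) and the ideals in the right direction ($\cI^n \subseteq \cJ$) so that the weaker inclusion $(\cI_t)^n \subseteq \cJ_t$ does hold, which is precisely what the grouping argument requires.
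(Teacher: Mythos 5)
Your proof is correct and follows essentially the same route as the paper's: derive $\thick\bP \subseteq \thick\bQ$ and $\cI^n \subseteq \cJ$ from the two hypotheses, then group an $nm$-fold composite of maps in $\cI_t$ into $m$ blocks lying in $\cJ_t$. The extra justification you give for $\cI^n \subseteq \cJ$ via the derived projective class $(\cP_n,\cI^n)$ is a welcome elaboration of a step the paper leaves implicit.
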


\begin{proof}
Let $m = \Flen_{\bQ}(M)$.
We must show that any composite $M = M_0 \to M_1 \to \cdots \to M_{mn}$ of maps in $\cI$
with the $M_i$ in $\thick \bP$ is zero.
The inclusion $\bP \subseteq \thick \bQ$ tells us that $\thick \bP \subseteq \thick \bQ$,
so these maps are in $\thick \bQ$.
The inclusion $\bQ \subseteq \smd{\bP}_n$ tells us that $\cI^n \subseteq \cJ$.
Thus the above composite is an $m$-fold composite of maps in $\cJ \cap \thick \bQ$,
and so is zero by the definition of $m$.
\end{proof}

Consider a triangle
\[ M' \lra M \lra M'' \lra \Sigma M' \]
in $\sfT$.
We know that $\len(M) \leq \len(M') + \len(M'')$ by~\cite[Note~3.6]{Chr}.
We will prove the analog for Freyd lengths.

\begin{lem}\label{le:Flengths-in-triangle}
Let $\sfT$ be a triangulated category with a set of test objects $\bP$, and let
$\cI_t$ be the class of maps in $\thick \bP$ that are trivial on $\bP$.
Let $M' \xrightarrow{\alpha} M \xrightarrow{\beta} M'' \to \Sigma M'$ be a triangle in $\sfT$.
If $M'$ and $M''$ have finite Freyd lengths, then
\[\Flen_{\bP}(M) \leq \Flen_{\bP}(M') + \Flen_{\bP}(M''). \]
\end{lem}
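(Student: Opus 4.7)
The plan is to set $a = \Flen_{\bP}(M')$ and $b = \Flen_{\bP}(M'')$ and show that any composite
\[ M = N_0 \xrightarrow{f_1} N_1 \xrightarrow{f_2} \cdots \xrightarrow{f_{a+b}} N_{a+b} \]
of $a+b$ maps in $\cI_t$ is zero. First I would observe that $M \in \thick \bP$: since $M'$ and $M''$ have finite Freyd length they lie in $\thick \bP$, and $\thick\bP$ is closed under extensions, so $M$ is too. Thus the claim makes sense in terms of $\cI_t$.

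Next, split the composite as $f = h \circ g$, where
\[ g = f_a \circ f_{a-1} \circ \cdots \circ f_1 : M \lra N_a \quad\text{and}\quad h = f_{a+b} \circ \cdots \circ f_{a+1} : N_a \lra N_{a+b}. \]
The key observation is that $\cI$ is closed under precomposition with arbitrary maps of $\sfT$: if $\varphi \in \cI$ and $\psi$ is composable with $\varphi$, then for $P \in \bP$ and $i \in \Z$ we have $[\Sigma^i P, \varphi \circ \psi] = \varphi_* \circ \psi_* = 0$, so $\varphi \circ \psi \in \cI$. In particular $\cI_t$ is closed under precomposition with maps in $\thick \bP$.

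Now apply this to $\alpha: M' \to M$. The composite
\[ M' \xrightarrow{\alpha} M \xrightarrow{f_1} N_1 \xrightarrow{f_2} \cdots \xrightarrow{f_a} N_a \]
equals $f_a \circ \cdots \circ f_2 \circ (f_1 \circ \alpha)$, where $f_1 \circ \alpha \in \cI_t$ by the closure property (and $M' \in \thick\bP$). Thus $g \circ \alpha$ is an $a$-fold composite in $\cI_t$ starting at $M'$, and therefore vanishes because $\Flen_{\bP}(M') \leq a$. By the defining property of the triangle, $g$ then factors through $\beta$, say $g = g' \circ \beta$ for some $g': M'' \to N_a$.

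Finally, consider $h \circ g' : M'' \to N_{a+b}$. Using the same closure property, $f_{a+1} \circ g' \in \cI_t$, so $h \circ g' = f_{a+b} \circ \cdots \circ f_{a+2} \circ (f_{a+1} \circ g')$ is a $b$-fold composite in $\cI_t$ starting at $M''$, hence is zero because $\Flen_{\bP}(M'') \leq b$. Therefore $f = h \circ g = h \circ g' \circ \beta = 0$, which completes the argument. The only real subtlety is the closure of $\cI_t$ under precomposition; once that is in hand the proof is a routine application of the triangle axioms, just as in the proof of the analogous inequality for ordinary length cited from \cite{Chr}.
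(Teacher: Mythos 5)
Your proof is correct and follows essentially the same route as the paper's: factor the $(a+b)$-fold composite as the first $a$ maps followed by the last $b$, kill the precomposition with $\alpha$ using $\Flen_{\bP}(M')$, factor through $M''$ via the triangle, and finish with $\Flen_{\bP}(M'')$. You are somewhat more careful than the paper in spelling out why $M \in \thick\bP$ and why $\cI_t$ is closed under precomposition with maps in $\thick\bP$, which is a welcome bit of extra rigor but not a different argument.
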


\begin{proof}
Let $n = \Flen_{\bP}(M')$ and $l = \Flen_{\bP}(M'')$.
We want to show that any map $\phi : M \to N$ in $(\cI_t)^{n+l}$ is trivial.
Write $\phi$ as $\phi_2 \phi_1$, where $\phi_1$ is in $(\cI_t)^n$
and $\phi_2$ is in $(\cI_t)^l$.
Then, since $\Flen_{\bP}(M') = n$, the composite $\phi_1 \alpha$ is trivial and
$\phi_1$ factors through $M''$:
\[
\xymatrix{
 M' \ar[r]^{\alpha} & M \ar[r]^{\beta} \ar[d]^{\phi_1} & M'' \ar@{-->}[dl]^{\psi} \\
                     & W \ar[d]^{\phi_2} \\
                     & N .
}
\]
Now since $\Flen_{\bP}(M'') = l$, the composite $\phi_2 \psi$ is trivial 
and so $\phi$ is trivial as well.
\end{proof}

Now we explain how to compare lengths in different categories, using the pullback projective class.

\begin{de}\label{de:pullback}
Let $U:\sfT \to \sfS$ be a triangulated functor between triangulated categories,
together with a left adjoint $F:\sfS \to \sfT$ that is also triangulated,
and let $(\cP,\cI)$ be a projective class in $\sfS$.
We define
\[ \cI' := \{ M \to N \text{ in }\sfT \text{ such that }UM \to UN\text{ is in } \cI \}
         = U^{-1}(\cI). \]
Then $\cI'$ forms the ideal of a projective class in $\sfT$ with relative projectives
\[ \cP' = \{ \text{retracts of } FP \text{ for } P \text{ in } \cP \}.\]
The projective class $(\cP', \cI')$ in $\sfT$ is called the \dfn{pullback}
of $(\cP, \cI)$ along the right adjoint $U$~\cite{rel ha}.
It is the projective class in $\sfT$ generated by the class of objects $F(\cP)$.
\end{de}
Note that if $(\cP, \cI)$ is a stable projective class,
i.e., $\cP$ (or equivalently $\cI$) is closed under suspension and desuspension,
then $(\cP', \cI')$ is also stable and $\cP' = \smd {F(\cP)}$.
One readily sees that the following relationships hold,
since $F$ sends $\cP$ into $\cP'$ and
$U$ sends $\cI'$ into $\cI$.

\begin{lem}\label{le:length-pullback}
Suppose we are in the above situation and
that $M \in \sfS$ and $N \in \sfT$. Then
\[ \len_{\cP} (M) \geq \len_{\cP'} (FM), \]
and, if the functor $U$ is faithful,
\[ \len_{\cP'} (N) \leq \len_{\cP} (UN). \xqedhere{165.5pt}\]
\end{lem}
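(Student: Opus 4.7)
The plan is to prove each inequality using the two equivalent descriptions of length introduced in Section~\ref{ss:length-general}: an object $X$ satisfies $\len_{\cP}(X) \leq n$ if and only if every $n$-fold composite in $\cI$ out of $X$ is zero, if and only if $X \in \cP_n$. The first inequality is handled most cleanly via the derived relative projective classes $\cP_n$, and the second via the ideals $\cI^n$.

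For the first inequality, I would show by induction on $n$ that the triangulated left adjoint $F$ carries $\cP_n$ into $\cP'_n$. The base case $n=0$ is immediate since $F$ is additive and therefore sends zero objects to zero objects. For the inductive step, suppose $M \in \cP_n$, so $M$ is a retract of some $M'$ fitting into a triangle $P \to M' \to Q \to \Sigma P$ with $P \in \cP$ and $Q \in \cP_{n-1}$. Applying $F$ yields a triangle $FP \to FM' \to FQ \to \Sigma FP$ in $\sfT$; by Definition~\ref{de:pullback}, $FP \in \cP'$, and $FQ \in \cP'_{n-1}$ by the inductive hypothesis, so $FM'$, and hence its retract $FM$, lies in $\cP'_n$. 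Taking $n = \len_{\cP}(M)$ yields $\len_{\cP'}(FM) \leq \len_{\cP}(M)$.

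For the second inequality, suppose $U$ is faithful and set $n = \len_{\cP}(UN)$. Given any composite $N \to Y_1 \to \cdots \to Y_n$ of $n$ maps in $\cI'$, applying $U$ produces a composite $UN \to UY_1 \to \cdots \to UY_n$ whose factors all lie in $\cI$ by the very definition $\cI' = U^{-1}(\cI)$. Since $\len_{\cP}(UN) = n$, this image composite is zero, and faithfulness of $U$ then forces the original composite $N \to Y_n$ to be zero. Hence $\len_{\cP'}(N) \leq n = \len_{\cP}(UN)$.

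Neither part presents a genuine obstacle; the only subtlety is the bookkeeping in the inductive step of the first part, where $\cP_n$ is defined only up to retract, so one must observe (automatically) that the additive triangulated functor $F$ respects both the triangle construction and the passage to retracts.
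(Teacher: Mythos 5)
Your proof is correct and follows exactly the route the paper intends: the paper leaves this lemma unproved, remarking only that the claims hold ``since $F$ sends $\cP$ into $\cP'$ and $U$ sends $\cI'$ into $\cI$,'' and your two arguments (the induction $F(\cP_n)\subseteq\cP'_n$ for the first inequality, and applying the faithful $U$ to an $n$-fold composite in $\cI'=U^{-1}(\cI)$ for the second) are precisely the elaboration of that remark.
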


\subsection{The stable module category}\label{ss:stmod}
Let $G$ be a finite group, and
let $k$ be a field whose characteristic $p$ divides the order of $G$.
The stable module category $\StMod{kG}$
is a quotient category of the module category $\Mod{kG}$.
\newcommand{\nn}{\mspace{-1mu}}
For $kG$-modules $M$ and $N$,
the hom-set $\sHom(\nn M, N)$ in $\StMod{kG}$ is the quotient
$\Hom(\nn M, N) \nn / \PHom(\nn M, N)$,
where $\PHom(M, N)$ consists of the maps 
that factor through a projective module.
Then $\StMod{kG}$ is a triangulated category with
triangles coming from short exact sequences in $\Mod{kG}$.
Two modules $M$ and $N$ are said to be \dfn{stably isomorphic} if they are isomorphic in the 
stable module category, and this holds if and only if their projective-free summands are
isomorphic as $kG$-modules.
We use the symbol $\iso$ for isomorphism as $kG$-modules, unless otherwise stated.
The desuspension $\Omega M$ of a module $M$ is defined to be the kernel
in any short exact sequence
\[ 0 \lra \Omega M \lra Q \lra M \lra 0,\]
where $Q$ is a projective $kG$-module.
Note that $\Omega M$ is well-defined in the stable module category, and
we denote by $\widetilde{\Omega} M$ the projective-free summand of $\Omega M$.
We write $\stmod{kG}$ for the full subcategory of finitely generated
modules in $\StMod{kG}$.
(More precisely, we include all modules which are stably isomorphic to finitely generated
$kG$-modules.)
We refer to~\cite{Carlson} for more background on $\StMod{kG}$.

Now let $P$ be a Sylow $p$-subgroup of $G$.
We consider the adjunction
\[ \up^G:  \StMod{kP} \rightleftarrows \StMod{kG}: \res = \down_P, \]
with $\up^G$ as a left adjoint.
We quote the following important facts in modular representation theory for further use:

\begin{lem}[\cite{Benson}]\label{le:background}
Let $G$ be a finite group, let $k$ be a field whose characteristic $p$ divides the order of $G$, 
and let $P$ be a Sylow $p$-subgroup of $G$. Then the following hold:
\begin{enumerate}[(i)]
\item
The restriction functor $\down_P: \StMod{kG} \to \StMod{kP}$ is faithful.
\item
Each $kG$-module $M$ is a summand of the module $M \down_P \up^G$.
\item
A $kG$-module $Q$ is projective if and only if its restriction $Q \down_P$ is projective.
\xqedhere{74.5pt}
\end{enumerate}
\end{lem}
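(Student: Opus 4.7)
The plan is to prove the three statements in reverse order of logical dependence, relying throughout on the classical transfer map, which exploits the fact that $[G:P]$ is coprime to $p$ and therefore a unit in $k$.

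First I would establish (ii), since both (i) and (iii) build on it. Write $M\down_P\up^G = kG \otimes_{kP} M$. Let $\{g_1, \ldots, g_n\}$ be a set of left coset representatives for $P$ in $G$, with $n = [G:P]$. Define the counit-like map $\sigma: M\down_P\up^G \to M$ by $g \otimes m \mapsto gm$, and the transfer $\tau: M \to M\down_P\up^G$ by $m \mapsto \sum_i g_i \otimes g_i^{-1} m$. A short check shows $\tau$ is $kG$-linear (independence from the choice of coset representatives follows from the $kP$-balance in the tensor product) and that $\sigma \tau = n \cdot \id_M$. Since $n$ is invertible in $k$, the map $n^{-1}\sigma$ is a retraction of $\tau$, so $M$ is a summand of $M\down_P\up^G$, already at the level of $kG$-modules (not merely in $\StMod{kG}$).

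Next I would handle (iii). The forward direction is standard: as a $kP$-module, $kG$ is free (on coset representatives), so $kG\up^G = kG$ restricts to a free $kP$-module; any projective $Q$ is a summand of a free $kG$-module, and restriction respects summands. For the converse, if $Q\down_P$ is projective as a $kP$-module, then $Q\down_P\up^G$ is projective as a $kG$-module, because $\up^G$ is left adjoint to the exact functor $\down_P$ and hence preserves projectives (concretely, it takes free modules to free modules). By (ii), $Q$ is a $kG$-summand of $Q\down_P\up^G$, so $Q$ is projective.

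Finally, for (i), suppose $f: M \to N$ is a map of $kG$-modules such that $f\down_P$ is zero in $\StMod{kP}$, i.e., $f\down_P = \beta \alpha$ for some $\alpha: M\down_P \to Q$ and $\beta: Q \to N\down_P$ with $Q$ a projective $kP$-module. Naturality of $\tau$ and $\sigma$ gives a factorization
\[
n \cdot f \;=\; \sigma_N \circ (f\down_P)\up^G \circ \tau_M
         \;=\; \sigma_N \circ \beta\up^G \circ \alpha\up^G \circ \tau_M,
\]
and the middle factor passes through the $kG$-module $Q\up^G$, which is projective by the proof of (iii). Hence $n\cdot f$ factors through a projective $kG$-module, so $n\cdot f = 0$ in $\StMod{kG}$. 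Since $n$ is invertible in $k$, we conclude $f = 0$ in $\StMod{kG}$.

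The only nontrivial point is the verification that $\tau$ is well-defined and $kG$-linear; the rest is bookkeeping with the adjunction. I would expect the coprimality $p \nmid [G:P]$, used to invert $n$, to be the single conceptual ingredient driving all three parts.
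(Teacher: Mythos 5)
Your proof is correct. The paper does not prove this lemma at all --- it is quoted as a standard fact from Benson's book --- and your transfer-map argument ($\tau(m)=\sum_i g_i\otimes g_i^{-1}m$ with retraction $n^{-1}\sigma$, where $n=[G:P]$ is a unit in $k$) is precisely the classical proof being cited: (ii) is the averaging/splitting argument, (iii) follows since induction and restriction preserve projectives, and (i) follows because $n\cdot f$ factors through the induced projective $Q\up^G$ whenever $f\down_P$ factors through a projective $kP$-module $Q$. All three verifications, including the $kG$-linearity of $\tau$ and the identity $n\cdot f=\sigma_N\circ(f\down_P)\up^G\circ\tau_M$, check out.
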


\begin{thm}[Mackey's Theorem~\cite{Benson}]\label{th:mackey}
Let $L$ and $H$ be subgroups of $G$, and let $V$ be a $kH$-module.
Then
\[ 
  (V \up_H^G) \down_L \ \iso 
             \bigoplus_{s\in L\backslash G/H} (sV) \down_{L\cap sHs^{-1}} \up^L. 
\]
Here $sV=s\otimes V$ is the corresponding $k(sHs^{-1})$-module for $s \in G$, and
the sum is taken over the double coset representatives. \qed 
\end{thm}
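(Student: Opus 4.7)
The plan is to decompose $kG$ as a direct sum of $(kL,kH)$-bimodules indexed by the $(L,H)$-double cosets in $G$, and then apply this decomposition to the tensor product defining induction.

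First, I would use the standard identification $V\up_H^G = kG \otimes_{kH} V$, so that $(V\up_H^G)\down_L$ is this same tensor product viewed as a left $kL$-module via the inclusion $kL \hookrightarrow kG$. Choosing representatives $s$ for the double cosets in $L\backslash G/H$, the partition $G = \bigsqcup_s LsH$ yields a $(kL,kH)$-bimodule decomposition $kG = \bigoplus_s k[LsH]$, since each double coset is simultaneously left $L$-stable and right $H$-stable.

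Second, I would establish for each $s$ the $(kL,kH)$-bimodule isomorphism
\[ k[LsH] \iso kL \otimes_{k(L\cap sHs^{-1})} k[sH], \]
where $k[sH]$ carries the right $kH$-action by right multiplication and the left $k(L\cap sHs^{-1})$-action inherited from left multiplication by $sHs^{-1}$. The map $l \otimes sh \mapsto lsh$ is well-defined over $k(L\cap sHs^{-1})$, surjective by construction, and injective by a dimension count using the bijection $L/(L\cap sHs^{-1}) \to LsH/H$, $l \mapsto lsH$. Tensoring with $V$ over $kH$ and using associativity gives
\[ k[LsH] \otimes_{kH} V \iso kL \otimes_{k(L\cap sHs^{-1})} \bigl(k[sH] \otimes_{kH} V\bigr), \]
and the right factor is identified with $sV$ as a $k(sHs^{-1})$-module via $sh \otimes v \mapsto s \otimes hv$. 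Restricting the action to $L\cap sHs^{-1}$, this summand is exactly $(sV)\down_{L\cap sHs^{-1}} \up^L$. Summing over $s$ yields the stated formula.

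The main obstacle is the bimodule isomorphism in the second step: one has to carefully verify the amalgamation relation $(la) \otimes sh = l \otimes (a \cdot sh)$ for $a \in L\cap sHs^{-1}$, keeping straight that $a$ acts on the right of $kL$ via the inclusion $L\cap sHs^{-1} \subseteq L$ and on the left of $k[sH]$ via the inclusion $L\cap sHs^{-1} \subseteq sHs^{-1}$, and to check injectivity by observing that each left $H$-coset contained in $LsH$ has the form $lsH$ for a unique left coset $l(L\cap sHs^{-1})$ in $L$. Once the bimodule decomposition is in place, the remainder is a formal manipulation of tensor products.
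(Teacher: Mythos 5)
Your proof is correct and is the standard double-coset argument: the paper itself offers no proof, simply quoting the result from Benson, and the argument in that reference is exactly the bimodule decomposition $kG \iso \bigoplus_s k[LsH]$ together with the identification $k[LsH] \iso kL \otimes_{k(L\cap sHs^{-1})} k[sH]$ that you give. All the key verifications (well-definedness over the amalgamation, the dimension count via $|LsH| = [L : L\cap sHs^{-1}]\,|H|$, and the identification $k[sH]\otimes_{kH} V \iso sV$ as $k(sHs^{-1})$-modules) check out.
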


\subsection{Ghost lengths and simple ghost lengths in $\StMod{kG}$}\label{ss:gh_classes}

By the \dfn{generating hypothesis} in $\StMod{kG}$,
we mean the generating hypothesis with respect to the set $\{ k \}$
containing only the trivial module.
Since Tate cohomology is represented by $k$,
the associated ideal $\cG$ consists of \dfn{ghosts} in $\StMod{kG}$, i.e.,
maps which induce the zero map in Tate cohomology.
Thus the generating hypothesis is the statement that there are no non-trivial
maps in $\thick k$ which induce the zero map in Tate cohomology.
The projective class $(\cF, \cG)$ generated by $k$ has $\cF = \smd{k}$,
summands of direct sums of suspensions and desuspensions of $k$.
We call $(\cF, \cG)$ the \dfn{ghost projective class}.
When we need to indicate the dependence on the group, we write $(\cF^G,\cG_G)$.

For a module $M \in \thick k$, its \dfn{ghost length} $\gl(M)$ is defined to be
its Freyd length with respect to $\{ k \}$, and
the \dfn{ghost number} of $kG$ is the Freyd number of $\StMod{kG}$ with respect to $\{ k \}$.
With this terminology, the generating hypothesis is the statement
that the ghost number of $kG$ is $1$.

Since the restriction functor preserves the trivial module,
we can induce up a (non-trivial) ghost from a subgroup of $G$ to
get a (non-trivial) ghost $G$.
This provides a very convenient tool when we study $p$-groups.
However, the inducing up technique has limited use for a general finite group,
since the ghosts, when induced up, do not always land in $\thick k$,
which is often smaller than $\stmod{kG}$.

In general, the stable module category is generated by the set $\bS$ of simple modules.
This suggests that we examine the projective class $(\cS, \sG)$ generated by $\bS$,
which we call the \dfn{simple ghost projective class}, and
compare it to the ghost projective class.
Here $\cS = \smd {\bS}$, and the maps in $\sG$ are called \dfn{simple ghosts}.
The \dfn{simple generating hypothesis} for $kG$ is the generating hypothesis
with respect to $\bS$.
The Freyd length (respectively number) with respect to $\bS$ will be called
the \dfn{simple ghost length}  (respectively \dfn{number}).
For $M \in \stmod{kG}$, the simple ghost length is denoted by $\sgl(M) = \Flen_{\bS}(M)$.
Note that while the ghost length is only defined for $M \in \thick{k}$,
the simple ghost length is defined for all $M \in \stmod{kG}$ since
$\stmod{kG} = \thick \bS$.
If $G$ is a $p$-group, then $\bS = \{ k \}$, so
the simple ghost projective class and the ghost projective class coincide.

\begin{rmk}\label{rm:finite-simple-number}
The radical series of a $kG$-module $M$ gives a construction of $M$ using simple modules,
showing that $\len_{\bS}(M)$ is at most the radical length of $M$,
since the pair $(\cS_n, \sG^n)$ is a projective class,
as described in Section~\ref{ss:length-general}.
Therefore,
\[
\sgl(M) = \Flen_{\bS}(M) \leq \len_{\bS}(M) \leq \rl(M) \leq \rl(kG).
\]
This shows that the simple ghost number of $kG$ is finite.
In particular, for $P$ a $p$-group, the ghost number of $kP$ is finite.
In Conjecture~\ref{cj:finite-ghost-number} we assert that the ghost
number of $kG$ is always finite, but this is an open question.
\end{rmk}

In the last section of the paper, we will study another projective class in $\StMod{kG}$,
which is called the strong ghost projective class.

\section{Groups with normal Sylow $p$-subgroups}\label{se:normal-Sylow}

In this section, we assume that our group $G$ has a normal Sylow $p$-subgroup $P$.
Under this assumption, in Section~\ref{ss:simple-class-normal} we show that 
the simple ghost number of $kG$ is equal to the ghost number of $kP$.
In Section~\ref{ss:A_4}, we apply this result to the group $A_4$ at the prime $2$,
deducing that the simple ghost number is $2$ and that the ghost number is between
$2$ and $4$.

\subsection{The simple projective class as a pullback}\label{ss:simple-class-normal}

In this section, we show that the simple ghost projective class in $\StMod{kG}$ is the pullback of 
the ghost projective class in $\StMod{kP}$,
under the assumption that the Sylow $p$-subgroup $P$ is normal in $G$.
Then we show that simple ghost lengths in $\StMod{kG}$ are the same as ghost lengths in $\StMod{kP}$.
The main result of this section should be viewed as the stabilised version of the next lemma:

\begin{lem}[{\cite[Lemma 5.8]{A}}]\label{le:res-simple}
Let $k$ be a field of characteristic $p$, and
let $G$ be a finite group with a normal Sylow $p$-subgroup $P$.
Let $M$ be a $kG$-module.
Then $\rad(M) \down_P = \rad(M \down_P)$.
It follows that the radical sequence of $M$ coincides with that of $M \down_P$.
In particular, $M$ is semisimple if and only if $M \down_P$ is.\qed
\end{lem}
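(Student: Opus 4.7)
The plan is to reduce the lemma to the assertion
\[ \rad(M)=\rad(kP)\cdot M \]
as subspaces of $M$, where on the right $kP$ acts via the given $kG$-structure. This reduction uses only the identity $\rad(N)=\rad(A)\cdot N$ valid for a finitely generated module $N$ over a finite-dimensional $k$-algebra $A$: applied with $A=kP$ to $M\down_P$ it rewrites $\rad(M\down_P)$ as $\rad(kP)\cdot M$, and applied with $A=kG$ it rewrites $\rad(M)$ as $\rad(kG)\cdot M$. The key structural input is that, since $P$ is normal in $G$, conjugation by any $g\in G$ preserves $\rad(kP)$, and hence $\rad(kP)\cdot M$ is automatically a $kG$-submodule of $M$ for every $kG$-module $M$; both inclusions below rest on this.

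For $\rad(M)\subseteq\rad(kP)\cdot M$, I would argue that $M/(\rad(kP)\cdot M)$ is semisimple as a $kG$-module by identifying the quotient algebra $kG/\rad(kP)\cdot kG$ with $k(G/P)$. The natural surjection $kG\twoheadrightarrow k(G/P)$ has kernel $kG\cdot I(P)$, where $I(P)$ denotes the augmentation ideal of $kP$; since $P$ is a $p$-group, $I(P)=\rad(kP)$, and since $|G/P|$ is coprime to $p$, $k(G/P)$ is semisimple by Maschke's theorem. For the reverse inclusion it suffices to show that $\rad(kP)$ annihilates every simple $kG$-module $S$: the subspace $\rad(kP)\cdot S$ is a $kG$-submodule by the normality observation, hence is $0$ or $S$ by simplicity, and the second option is ruled out by nilpotence of $\rad(kP)$. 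Applying this to the simple summands of the semisimple quotient $M/\rad(M)$ gives $\rad(kP)\cdot M\subseteq\rad(M)$.

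The radical-sequence assertion follows by induction: apply the established equality to $\rad^n(M)$ in place of $M$. The semisimplicity statement is then the case $\rad(M)=0$. The only step I expect to require genuine care is the kernel computation identifying $kG/\rad(kP)\cdot kG$ with $k(G/P)$ as $k$-algebras; everything else is formal once normality has been used to place a $kG$-module structure on $\rad(kP)\cdot M$.
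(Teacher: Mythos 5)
Your argument is correct and complete. The paper itself gives no proof here --- it quotes the result from Alperin --- and what you have written is essentially the standard proof of that quoted lemma: the containment $\rad(M)\subseteq\rad(kP)\cdot M$ comes from $kG/(kG\cdot I(P))\iso k(G/P)$ being semisimple by Maschke (this is where the Sylow hypothesis, i.e.\ $p\nmid |G/P|$, enters), and the reverse containment comes from normality making $\rad(kP)\cdot S$ a $kG$-submodule of each simple $S$, killed off by nilpotence (this is where only normality of the $p$-subgroup is needed). One small remark: the identity $\rad(N)=\rad(A)\cdot N$ holds for \emph{arbitrary} modules over a finite-dimensional algebra, not just finitely generated ones, so your proof covers the lemma as stated for a general $kG$-module $M$; the finite-generation caveat you flag is unnecessary.
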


We write $(\smd{\cF^P \up^G},\, \res^{-1}(\cG_P))$ for the pullback of $(\cF^P,\cG_P)$ along the restriction functor.
Then, by Lemma~\ref{le:background}(ii), we have $\cF^G \subseteq \smd{\cF^P\up^G}$.
Equivalently, $\res^{-1}(\cG_P) \subseteq \cG_G$, i.e.,
if a map in $\StMod{kG}$ restricts to a ghost in $\StMod{kP}$,
then it is a ghost.
(Note that we write $\down_P$ for the restriction functor except when
considering preimages, in which case we write $\res^{-1}$.)
We can describe $\res^{-1}(\cG_P)$ more precisely when $P$ is normal in $G$.

\begin{thm}\label{th:normal}
Let $k$ be a field of characteristic $p$, and
let $G$ be a finite group with a normal Sylow $p$-subgroup $P$.
Then the projective classes $(\cS, \sG)$ and $(\smd{\cF^P\up^G}, \res^{-1}(\cG_P))$
in $\StMod{kG}$ coincide, and
for $M \in \stmod{kG}$ and $L \in \stmod{kP}$, we have
\[ \sgl(M) = \gl(M \down_P) \quad\text{and}\quad \gl(L) = \sgl(L \up^G) .\]
Hence
\[\text{simple ghost number of } kG = \text{ghost number of } kP.\]
In particular, the simple generating hypothesis holds for $kG$ if and only if
$P \iso C_2$ or $P \iso C_3$.
\end{thm}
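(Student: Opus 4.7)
The plan is to identify the simple ghost projective class in $\StMod{kG}$ with the pullback of the ghost projective class in $\StMod{kP}$ along restriction, and then to deduce the length equalities by applying Lemma~\ref{le:length-pullback} to two different pullbacks.

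First I would prove the ideal equality $\sG = \res^{-1}(\cG_P)$. For the inclusion $\res^{-1}(\cG_P) \subseteq \sG$, given a map $f$ with $f\down_P$ a ghost and a simple $kG$-module $S$, Lemma~\ref{le:res-simple} implies $S\down_P$ is semisimple over $kP$, and since $P$ is a $p$-group the only simple $kP$-module is $k$, so $S\down_P \in \cF^P$. Lemma~\ref{le:background}(ii) makes $S$ a summand of $S\down_P\up^G$, and the adjunction
\[ \sHom_{kG}(\Omega^i(S\down_P\up^G), f) \iso \sHom_{kP}(\Omega^i(S\down_P), f\down_P) \]
then vanishes, showing $f \in \sG$. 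For the opposite inclusion, normality of $P$ forces each double coset representative $s \in P\backslash G/P$ to satisfy $sPs^{-1} = P$, so Mackey's theorem gives $k\up^G\down_P \iso \bigoplus_{s \in G/P} k$, which is semisimple. By Lemma~\ref{le:res-simple} the module $k\up^G$ is then semisimple over $kG$, placing it in $\cS$. For a simple ghost $f$, the adjunction $\sHom_{kG}(\Omega^i(k\up^G), f) \iso \sHom_{kP}(\Omega^i k, f\down_P)$ vanishes, so $f\down_P$ is a ghost. This yields the projective class equality $(\cS, \sG) = (\smd{\cF^P\up^G}, \res^{-1}(\cG_P))$.

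Next I would apply Lemma~\ref{le:length-pullback} to the pullback of $(\cF^P, \cG_P)$ along the faithful right adjoint $\down_P$, obtaining $\sgl(M) \leq \gl(M\down_P)$ for $M \in \stmod{kG}$ and $\sgl(L\up^G) \leq \gl(L)$ for $L \in \stmod{kP}$. For the reverse inequalities I would pull back $(\cS, \sG)$ along $\up^G$, using that for a group algebra extension $kP \subseteq kG$ the functor $\up^G$ admits $\down_P$ as both right and left adjoint (the Frobenius property). The ideal equality just established, combined with the fact that conjugation by $s$ is an autoequivalence of $\StMod{kP}$ fixing $k$, shows that $(g\up^G)\down_P \iso \bigoplus_{s \in G/P} {}^s g$ is a ghost exactly when $g$ is, so this second pullback is again $(\cF^P, \cG_P)$. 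The same Mackey decomposition shows $\up^G$ is faithful (a nonzero $g$ produces a nonzero summand of $(g\up^G)\down_P$, forcing $g\up^G$ nonzero), so Lemma~\ref{le:length-pullback} now yields $\gl(M\down_P) \leq \sgl(M)$ and $\gl(L) \leq \sgl(L\up^G)$.

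Combining the inequalities gives the length equalities $\sgl(M) = \gl(M\down_P)$ and $\gl(L) = \sgl(L\up^G)$. Taking suprema, using that every $L \in \stmod{kP}$ is a summand of $(L\up^G)\down_P$ by Mackey, immediately yields the equality of the simple ghost number of $kG$ with the ghost number of $kP$. The final statement on the simple generating hypothesis then follows from the established characterization~\cite{GH for p, GH split, admit, GH per} that the generating hypothesis holds for a $p$-group precisely when that $p$-group is $C_2$ or $C_3$. The main obstacle I expect is the second pullback step, specifically justifying that the $\up^G$-pullback of $(\cS, \sG)$ really is $(\cF^P, \cG_P)$ via the Frobenius ambidexterity, since this requires recognizing that $\up^G$ has $\down_P$ as a left adjoint, a feature not previously emphasized in the paper.
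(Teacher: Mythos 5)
Your overall strategy matches the paper's: first identify $(\cS,\sG)$ with the pullback $(\smd{\cF^P\up^G},\res^{-1}(\cG_P))$ using Lemma~\ref{le:res-simple}, Mackey's theorem and Lemma~\ref{le:background}(ii), and then move lengths back and forth using the two adjunctions between $\up^G$ and $\down_P$ together with faithfulness of both functors. Your identification of the $\up^G$-pullback of $(\cS,\sG)$ with $(\cF^P,\cG_P)$ is correct (and the Frobenius ambidexterity you worry about is used elsewhere in the paper, in Proposition~\ref{pr:faithful}), so the two-pullback packaging is a legitimate reorganization of exactly the inequalities the paper proves by hand.

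The one genuine gap is the conflation of $\len$ with the Freyd length. Lemma~\ref{le:length-pullback} is a statement about $\len_{\cP}$, whereas $\sgl$ and $\gl$ are by definition Freyd lengths, which only allow composites whose intermediate objects lie in $\thick{\bS}=\stmod{kG}$, respectively $\thickC{P}{k}=\stmod{kP}$; since $\Flen\leq\len$ with no reverse inequality in general (the paper is explicit that length statements do not automatically descend to Freyd lengths), the four inequalities you cite do not literally read $\sgl(M)\leq\gl(M\down_P)$, etc. The repair is to rerun the lemma's proofs at the level of composites with finitely generated intermediate objects, which works here because $\up^G$ and $\down_P$ preserve finite generation. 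The two ``faithful'' inequalities are immediate: restrict a composite of simple ghosts through $\stmod{kG}$ to a composite of ghosts through $\stmod{kP}$, or induce a composite of ghosts up to a composite of simple ghosts. The two remaining inequalities cannot be obtained from the $\cP_n$-style argument behind $\len_{\cP}(M)\geq\len_{\cP'}(FM)$; instead one must take adjoints of the whole composite. For instance, for $\gl(M\down_P)\leq\sgl(M)$, a nonzero composite of ghosts $M\down_P\to L_1\to\cdots\to L_m$ has adjoint under $\down_P\dashv\up^G$ equal to the composite of $(g_1\up^G)\circ\eta_M$ with the maps $g_i\up^G$, all simple ghosts (using $\cG_P\up^G\subseteq\sG$ and that $\sG$ is an ideal), through the finitely generated modules $L_i\up^G$; the dual adjunction handles $\sgl(L\up^G)\leq\gl(L)$. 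This adjoint step is exactly what the paper's proof supplies; with it added, your argument is complete.
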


The first claim of the theorem is saying that a map in $\StMod{kG}$ is a simple
ghost if and only if its restriction to $P$ is a ghost.

\begin{proof}
We first show that both functors $\up^G$ and $\res = \down_P$ preserve the test objects.
The containment $\res(\cS) \subseteq \cF^P$ follows directly from Lemma~\ref{le:res-simple}.
To see that $\smd{\cF^P \up^G} \subseteq \cS$, 
by Lemma~\ref{le:res-simple} it suffices to check that
$k \up^G \down_P \iso \oplus k$,
and this is true by Mackey's theorem (Theorem~\ref{th:mackey}).
Finally, by Lemma~\ref{le:background}(ii),
we have inclusions $\cS \subseteq \smd{\res(\cS)\up^G} \subseteq \smd{\cF^P\up^G}$,
hence $\cS = \smd{\cF^P\up^G}$.
It follows immediately that $\sG = \res^{-1}(\cG_P)$, and so $\sG \down_{P} \subseteq \cG_P$.
Note that we also have that $\cG_P \up^{G} \subseteq \sG$, using that $\res(\cS) \subseteq \cF^P$
and that $\up^G$ is right adjoint to restriction.

We now prove that $\sgl(L \up^G) = \gl(L)$, with the other equality following similarly.
Since the induction functor takes a non-trivial ghost in $\stmod{kP}$ into
a non-trivial simple ghost in $\stmod{kG}$,
we get $\sgl(L \up^G) \geq \gl(L)$ for $L \in \stmod{kP}$.

To show that $\sgl(L \up^G) \leq \gl(L)$,
we claim that the natural isomorphism $\alpha: \sHom_G(L \up^G, M) \to \sHom_P(L, M \down_P) $
takes simple ghosts to ghosts.
Indeed, if $g: L \up^G \to M$ is a simple ghost,
then the morphism $\alpha(g)$ is the composite
$L \xrightarrow{\eta} L \up^G \down_P \xrightarrow{g \down_P} M \down_P$, and is a ghost. 
It follows that $\sgl(L \up^G) \leq \gl(L)$.
\end{proof}

\begin{rmk}
One can also consider the \emph{unstable} projective classes generated by the simple modules
in $\StMod{kG}$ and $\StMod{kP}$.
We write $(\cS_u, \sG_u)$ for the unstable projective class generated by the simple modules in $\StMod{kG}$ and
$(\cF_u, \cG_u)$ for the unstable projective class generated by the trivial module in $\StMod{kP}$.
Here $\cS_u$ consists of retracts of direct sums of simple modules in $\StMod{kG}$ and
$\cF_u$ consists of direct sums of the trivial module in $\StMod{kP}$.

By the proof of Theorem~\ref{th:normal}, the projective classes $(\cS_u, \sG_u)$ and $(\smd{\cF_u \up^G}, \res^{-1}(\cG_{u}))$
are the same in $\StMod{kG}$.
And since $P$ is a $p$-group,
the radical length of a projective-free $kP$-module $L$ is equal to
the length of $L$ with respect to the projective class $(\cF_u, \cG_u)$~\cite[Proposition~4.5]{Gh num}.
It follows that, for $M \in \stmod{kG}$ and $L \in \stmod{kP}$,
if $M \down_P$ and $L$ are projective-free modules, then 
\[ \len_{\cS_u}(M) = \rl(M \down_P) \quad\text{and}\quad \rl(L) = \len_{\cS_u}(L \up^G) .\]
Hence we see that Theorem~\ref{th:normal} and Lemma~\ref{le:res-simple} are
stable and unstable versions of each other.
\end{rmk}

\begin{rmk}
When the Sylow $p$-subgroup is not normal, there is no obvious
relationship between the simple ghost number of $kG$ and the
ghost number of its Sylow $p$-subgroup, or between their radical lengths.
See Section~\ref{ss:SL(2,p)} for more discussion.
\end{rmk}

\subsection{The group $A_4$ at the prime $2$}\label{ss:A_4}
In this section, we show that 
in general the restriction functor from a finite group $G$ 
to a Sylow $p$-subgroup $P$ does not preserve ghosts. 
We also compute the simple ghost number of $kA_4$ at the
prime $2$ and give bounds on its ghost number.

Let $G$ be $A_4$, the alternating group on $4$ letters, and set $p=2$, 
so $P=V$, the Klein four group, is normal in $A_4$. 
It is known that $\thickC {A_4\!} k = \stmod{kA_4}$~\cite{GH per}. 
For convenience, we assume that $k$ contains a third root of unity $\zeta$,
i.e., $\F_4 \subseteq k$. 
Then $k \up_V^G\iso k \oplus k_{\zeta} \oplus k_{\bar{\zeta}}$. 
Here $k_{\zeta}$ is the one-dimensional module with 
the cyclic permutation $(123)$ acting as $\zeta$ and 
elements of even order acting as the identity, 
and similarly for $k_{\bar{\zeta}}$. 
Note that by Lemmas~\ref{le:background}(ii) and~\ref{le:res-simple},
these are all the simple $kA_4$-modules, i.e., $\bS = \{ k, k_{\zeta}, k_{\bar{\zeta}} \}$.
By Theorem~\ref{th:normal}, a map restricts to a ghost in $\stmod{kV}$ if and only if
it is a simple ghost in $\stmod{kA_4}$.
Since $k_{\zeta} \ncong \widetilde{\Omega}^ik$ for all $i \in \Z$, 
the class of $kA_4$-modules $\cF = \smd{k}$ is strictly contained in $\cS = \smd{\bS}$, or equivalently,
simple ghosts are strictly contained in ghosts. 
Therefore, there exists a ghost in $\stmod{kA_4}$ which does not restrict
to a ghost in $\stmod{kP}$.

For a specific example, we consider the connecting map
$\gamma : k_{\zeta} \to \Omega k_{\zeta}$ 
in the Auslander-Reiten triangle~\cite[Section~4.12]{Benson}
\[
  \Omega^2 k_{\zeta} \lra  E  \lra k_{\zeta} \llra{\gamma} \Omega k_{\zeta}
\]
associated to the simple module $k_{\zeta}$.
Since $\gamma$ is stably non-trivial, it is not a simple ghost.
But since $k_{\zeta} \not\in \cF$, the map is a ghost, by~\cite[Theorem~2.1]{GH split}.

We now compute the simple ghost number of $kA_4$ and give bounds
on the ghost number.
We are able to get an upper bound for the ghost number of $kA_4$,
since the simple modules have bounded ghost lengths.

\begin{pro}\label{pr:A_4}
Let $k$ be a field of characteristic $2$.
Assume that $k$ contains a third root of unity $\zeta$. Then
\[
\text{simple ghost number of }kA_4 = \text{ghost number of } kV = 2,
\]
and
\[
2 \leq \text{ghost number of }kA_4 \leq 4.
\]
\end{pro}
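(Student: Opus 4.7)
The plan is as follows. The proposition has three assertions, which I would address in order. The first equality,
\[
\text{simple ghost number of } kA_4 = \text{ghost number of } kV,
\]
is an immediate application of Theorem~\ref{th:normal}, since $V$ is the (normal) Sylow $2$-subgroup of $A_4$. For the value $\text{ghost number of } kV = 2$, I would identify $V = C_2 \times C_2$ with the dihedral $2$-group $D_4$ and invoke the formula $\lfloor q/2+1 \rfloor$ for the ghost number of $D_{2q}$ from~\cite{Gh num} at $q = 2$.

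For the lower bound $\text{ghost number of } kA_4 \geq 2$, I would cite the explicit non-trivial ghost $\gamma : k_\zeta \to \Omega k_\zeta$ produced in the paragraph preceding the proposition, which already shows that the generating hypothesis fails in $\thick k = \stmod{kA_4}$.

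For the upper bound $\text{ghost number of } kA_4 \leq 4$, I would apply Lemma~\ref{le:Flengths} with $\bP = \{k\}$ and $\bQ = \bS = \{k, k_\zeta, k_{\bar\zeta}\}$. The hypotheses $\bP \subseteq \thick \bQ$ and $M \in \thick \bP$ are automatic because $\thick k = \stmod{kA_4} = \thick \bS$. The remaining hypothesis reduces to the claim $k_\zeta,\, k_{\bar\zeta} \in \cF_2$; granting this, the lemma yields
\[
\gl(M) \leq 2\,\sgl(M) \leq 2 \cdot 2 = 4
\]
for every $M \in \stmod{kA_4}$, proving the upper bound.

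The main obstacle will be the claim $k_\zeta \in \cF_2$ (and by conjugation symmetry $k_{\bar\zeta} \in \cF_2$). Since $k_\zeta \notin \cF$, one already has $\gl(k_\zeta) \geq 2$, so the task is to produce a genuine $2$-cell construction of $k_\zeta$ from $k$. My plan is to compute the Tate cohomology $[\Sigma^i k, k_\zeta] \iso \hat H^{-i}(A_4, k_\zeta)$; since $|C_3|$ is invertible in $k$, the Lyndon--Hochschild--Serre spectral sequence for $1 \to V \to A_4 \to C_3 \to 1$ collapses to give $H^n(A_4, k_\zeta) \iso H^n(V, k_\zeta)^{C_3}$, and these groups can be shown to be non-zero in infinitely many Tate degrees. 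I would then use a suitable Tate class to exhibit $k_\zeta$ as a retract of the middle term of a triangle $P_1 \to N \to P_2 \to \Sigma P_1$ with $P_1, P_2 \in \cF = \smd{k}$. Alternatively, one could analyse ghosts out of $k_\zeta$ directly, exploiting the structure of the projective cover $P(k_\zeta)$ and its first few syzygies to verify that every two-fold composite of ghosts with domain $k_\zeta$ vanishes.
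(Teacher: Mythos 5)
Your overall architecture matches the paper's proof exactly: Theorem~\ref{th:normal} for the first equality, a non-trivial ghost in $\thickC{A_4\!}{k}=\stmod{kA_4}$ for the lower bound, and Lemma~\ref{le:Flengths} applied to $\bP=\{k\}$ and $\bQ=\bS$ for the upper bound, with everything hinging on the claim $k_{\zeta}, k_{\bar{\zeta}}\in\cF_2$. You have correctly isolated that claim as the crux.

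The gap is that you never actually prove $k_{\zeta}\in\cF_2$, and neither of the two strategies you sketch closes it as stated. Non-vanishing of $\hat H^*(A_4,k_{\zeta})$ in infinitely many degrees gives non-zero maps between suspensions of $k$ and $k_{\zeta}$, but it does not by itself produce a triangle $P_1\to N\to P_2\to\Sigma P_1$ with $P_1,P_2\in\cF$ admitting $k_{\zeta}$ as a retract of $N$; membership in $\cF_2$ requires an actual two-step construction, not merely the existence of Tate classes. Your alternative (showing every two-fold composite of ghosts out of $k_{\zeta}$ vanishes, hence $\len_{\cF}(k_{\zeta})\leq 2$) is logically sufficient but is precisely the hard computation you are deferring. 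The paper instead quotes a concrete short exact sequence from the theory of periodic cohomology for $A_4$ (\cite[Section~4.17]{Benson}),
\[ 0 \lra \widetilde{\Omega}^2 k \lra \widetilde{\Omega} k_{\zeta} \oplus \widetilde{\Omega} k_{\bar{\zeta}} \lra k \lra 0 , \]
whose outer terms lie in $\cF$; this exhibits $\widetilde{\Omega} k_{\zeta} \oplus \widetilde{\Omega} k_{\bar{\zeta}}$, and hence each of $k_{\zeta}$ and $k_{\bar{\zeta}}$, as an object of $\cF_2$ in one line. Without this sequence (or an equivalent explicit triangle), the upper bound of $4$ is not established. The remaining pieces of your argument are fine: the ghost $\gamma\colon k_{\zeta}\to\Omega k_{\zeta}$ does give the lower bound, and the value $2$ for the ghost number of $kV$ agrees with the paper's citation of~\cite{Gh in rep}.
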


\begin{proof}
By Theorem~\ref{th:normal}, the simple ghost number of $kA_4$ is equal
to the ghost number of $kV$, which is known to be $2$ (see~\cite{Gh in rep}).

Since $\stmod{kA_4} = \thick k$ and every simple ghost is a ghost,
the ghost number of $kA_4$ is at least $2$.
On the other hand,
there is a short exact sequence
\[ 0 \lra \widetilde{\Omega}^2 k \lra \widetilde{\Omega} k_{\zeta} \oplus \widetilde{\Omega} k_{\bar{\zeta}} \lra k \lra 0 \]
in $\modu{kA_4}$ (see~\cite[Section~4.17]{Benson}).
It follows that $\bS \subseteq \cF_2$.
Thus, by Lemma~\ref{le:Flengths},
the ghost number of $kA_4$ is at most twice the simple ghost number.
\end{proof}

Note that $\stmod{kA_4} = \thick k$.
In the next section, we prove finiteness under a weaker hypothesis.

\section{Groups whose principal block is generated by $k$}\label{se:princ-block-gen-by-k}

In this section, we further our study of the ghost number of a group
algebra $kG$ by making use of the fact that the thick subcategory
$\thick k$ generated by $k$ is contained in $\stmod{B_0}$, where
$B_0$ is the principal block of $kG$, and
$\stmod{B_0}$ consists of modules in $\stmod{kG}$ whose projective-free summands
are in the principal block $B_0$.
The reader is referred to~\cite{A} and~\cite{Benson} for background on block theory.

We focus on the case in which $\thick k = \stmod{B_0}$.
In Section~\ref{ss:direct product}, we show that this holds
when the Sylow $p$-subgroup $A$ is a direct factor.
In this situation, we prove that $\stmod{kA}$ is equivalent 
to $\thickC{G}{k}$, and use these results to show that
the ghost numbers of $kA$ and $kG$ agree.

In Section~\ref{ss:B_0},
we show that when $\thick k = \stmod{B_0}$, the ghost number of $kG$ is finite.
The finiteness of the ghost number remains an open question without this hypothesis.
We also show that in general the composite of functors
\[ e_0(\blank \up^G): \StMod{kP} \lra \StMod{B_0} \]
is faithful, where $P$ is a Sylow $p$-subgroup of $G$
and $e_0$ is the principal idempotent,
which allows us to prove that the ghost number of $kG$ is at least 
as large as the ghost number of $kP$ when $\thick k = \stmod{B_0}$.
We quote Theorem~\ref{th:principal} which provides
conditions equivalent to 
$\thick k = \stmod{B_0}$.

Finally, in Section~\ref{ss:dihedral}, we use this material to compute
the ghost numbers of the dihedral groups at the prime $2$.
In addition, we give a block decomposition of each dihedral group
and compute its simple ghost number.

\subsection{Direct products}\label{ss:direct product}

In this section, we study the ghost number of certain direct products,
making a slight correction to a result in~\cite{GH per}.
Let $k$ be a field of characteristic $p$, 
and $G=A\times B$ with $A$ being a $p$-group, 
and the order of $B$ being coprime to $p$. 
(That is, $A$ is the Sylow $p$-subgroup of $G$.)
Write $i:A \to A\times B$ for the inclusion of $A$ into $G$ and 
$\pi:A\times B \to A$ for the projection onto $A$. Then $\pi i=\id_A$. 

We will prove the following result.
Recall that for a class $\bP$ of objects, 
$\loc \bP$ denotes the localizing category generated by $\bP$, 
i.e., the smallest full triangulated subcategory that is closed under 
arbitrary coproducts and retracts and contains $\bP$. 

\begin{thm}\label{th:products}
Let $k$ be a field of characteristic $p$, and let $G=A\times B$ with $A$ a $p$-group
and the order of $B$ coprime to $p$. 
Then the projection $\pi \colon G \to A$ induces a triangulated functor 
$\pi^* \colon \StMod{kA} \to \StMod{kG}$ that preserves the trivial representation $k$, 
and it restricts to triangulated equivalences
\[
  \pi^*:\StMod{kA} \lra \locC G k 
\]
and
\[
  \pi^*:\stmod{kA} \lra \thickC G k .
\]
The inverse functors are the restriction functors.
Moreover, the image of $\pi^*$ consists of the $kG$-modules
whose projective-free summands have trivial $B$-actions.
\end{thm}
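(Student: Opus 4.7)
The plan is to take $\pi^*$ to be restriction of scalars along the surjective $k$-algebra map $kG \twoheadrightarrow kA$ induced by $\pi$; concretely, $\pi^*(M)$ is $M$ with its natural $kA$-action and $B$ acting trivially. Since $\pi i = \id_A$, one has $i^* \pi^* = \id$ at the level of module categories, and $\pi^*$ is plainly exact and coproduct-preserving. The first real step is to check that $\pi^*$ sends projectives to projectives, so that it descends to a triangulated functor $\StMod{kA} \to \StMod{kG}$. Here the hypothesis on $|B|$ enters essentially: by Maschke's theorem $kB$ is semisimple, hence the trivial $kB$-module is a summand of $kB$; tensoring, $\pi^*(kA)$ is a summand of $kA \otimes_k kB \iso kG$, and by passing to sums and summands every projective $kA$-module has projective image.

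Next I would show $\pi^*$ is fully faithful on stable categories. The map
\[ \sHom_G(\pi^* M, \pi^* N) \lra \sHom_A(M, N),\quad g \longmapsto g \down_A, \]
is surjective because $i^* \pi^* = \id$, and injective because the restriction $\down_A$ is faithful on $\StMod{kG}$ by Lemma~\ref{le:background}(i), as $A$ is a Sylow $p$-subgroup of $G$. A fully faithful triangulated functor that preserves coproducts has essential image closed under triangles (by fullness), under retracts (by the splitting of idempotents in the stable module category), and under coproducts; hence its essential image in $\StMod{kG}$ is a localizing subcategory, and the essential image of its restriction to $\stmod{kA}$ is a thick subcategory of $\stmod{kG}$.

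Since $\pi^* k = k$, both essential images contain $k$, hence contain $\locC{G}{k}$ and $\thickC{G}{k}$ respectively. For the reverse inclusion, the $p$-group hypothesis gives $\StMod{kA} = \locC{A}{k}$ and $\stmod{kA} = \thickC{A}{k}$, and $\pi^*$, being triangulated and coproduct-preserving with $\pi^* k = k$, maps these into $\locC{G}{k}$ and $\thickC{G}{k}$. This produces the two equivalences, with quasi-inverses supplied by restricting $\down_A$, since $i^* \pi^* = \id$. For the final assertion, $\pi^* M$ has trivial $B$-action by construction, and a short check (using preservation of projectives together with the faithfulness of $\down_A$ on underlying modules) shows $\pi^*$ sends projective-free modules to projective-free modules; conversely, any $kG$-module with trivial $B$-action descends along $\pi$ to a $kA$-module, so the image of $\pi^*$ in $\StMod{kG}$ is exactly the class of modules whose projective-free summand has trivial $B$-action.

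The principal obstacle is the preservation of projectives, which is where the coprimality of $|B|$ with $p$ is indispensable; everything else then flows formally from the section identity $i^* \pi^* = \id$ and the faithfulness of restriction to the Sylow. A secondary subtlety is verifying that the essential image of the big version $\StMod{kA} \to \StMod{kG}$ really is a localizing subcategory, which reduces to the splitting of idempotents in the stable module category.
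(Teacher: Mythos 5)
Your proposal is correct and follows essentially the same route as the paper: define $\pi^*$ as inflation, show it preserves projectives so that it descends to the stable categories, deduce full faithfulness from the section identity $i^*\pi^*=\id$ together with faithfulness of restriction to the Sylow subgroup, and then identify the essential image with $\locC{G}{k}$ (resp.\ $\thickC{G}{k}$). The only differences are in minor sub-steps: you get preservation of projectives from Maschke's theorem applied to $kB$ rather than from the fact that projectivity is detected on restriction to a Sylow subgroup, and you obtain the equivalence $\stmod{kA}\to\thickC{G}{k}$ by checking directly that the essential image is thick, where the paper instead invokes Neeman's identification of $\thick{k}$ with the compact objects of $\loc{k}$.
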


This theorem corrects the statement of Lemma~4.2 in~\cite{GH per}, 
which has $\StMod{kG}$ in place of $\locC G k$ and $\stmod{kG}$ in
place of $\thickC G k$.
That statement is false whenever $B$ is non-trivial.  
The problem is that the restriction functor $\StMod{kG} \to \StMod{kA}$
is not full.
For example, writing $kB$ for the $kG$-module on which $A$ acts trivially,
note that $kB \iso k \up_A^G$.
Then one can see that the dimension of $\sHom_G(kB, kB)$ is $|B|$,
while the dimension of $\sHom_A(kB\down_A, kB\down_A)$ is $|B|^2$.
The correction is simply to restrict attention to $\locC G k$.
The uses of Lemma~4.2 in~\cite{GH per} can be replaced with the above theorem
and the fact that $\thickC{G}{k} = \stmod{B_0}$ (Corollary~\ref{co:thick=B0} below), so
all of the main results of~\cite{GH per} are correct.

\begin{proof}[Proof of Theorem]
We first note that
the functor $\pi^*:\Mod{kA} \to \Mod{kG}$ induced by $\pi:G \to A$ passes down to the stable module categories.
To prove this, it suffices to show that if $P$ is a projective $kA$-module, then
$\pi^* P$ is projective.  
Since $\pi i = \id$, the restriction of $\pi^* P$ to $A$ is $P$,
and since $A$ is the Sylow $p$-subgroup of $G$, it follows from Lemma~\ref{le:background}(iii)
that $\pi^* P$ is projective.
It is easy to see that the functor $\pi^* \colon \StMod{kA} \to \StMod{kG}$ is triangulated and
preserves coproducts and the trivial representation. 

Let $\im(\pi^*)$ be the essential image of $\pi^*$ in $\StMod{kG}$. 
The modules in $\im(\pi^*)$ are exactly those whose projective-free summands
have trivial $B$-actions. 
It follows that $\pi^*$ is full and that $\im(\pi^*)$ is closed under coproducts. 
Since $i^* \pi^*=\id$, the functor $\pi^*$ is also faithful.
Thus $\pi^*$ induces a triangulated equivalence between $\StMod{kA}$ and $\im(\pi^*)$. 
Because $\StMod{kA}=\locC A k$ and $\pi^*$ is triangulated, we get that 
$\im(\pi^*)$ is contained in $\locC G k$ and that $\im(\pi^*)$ is triangulated. 
Hence $\im(\pi^*)=\locC G k$, and we get the triangulated equivalence 
$\pi^*:\StMod{kA} \to \locC G k$. 
Clearly, the restriction functor $i^*: \locC G k \to \StMod{kA}$ on the localizing 
subcategory generated by $k$ is inverse to $\pi^*$. 
Restricting to compact objects, we get the equivalence
$\pi^*:\stmod{kA} \to \thickC G k$,
since $\thick k$ consists of exactly the compact objects in $\loc k$ by~\cite[Lemma~2.2]{Neeman}.
\end{proof}

As a corollary, we can compute the ghost number of $kG$.

\begin{cor}\label{co:ghost no}
In the same set-up as above, the following holds:
\[ \text{ghost number of }kG = \text{ghost number of }kA. \]
In particular, the generating hypothesis holds for $kG$ if and only if 
it holds for $kA$ if and only if $A$ is $C_2$ or $C_3$.\qed
\end{cor}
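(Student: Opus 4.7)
The plan is to transport the ghost-number computation from $kG$ to $kA$ via the triangulated equivalence
\[
  \pi^* : \stmod{kA} \lra \thickC{G}{k}
\]
provided by Theorem~\ref{th:products}. Since $A$ is a $p$-group, $\stmod{kA} = \thickC{A}{k}$, so in both the source and the target the thick subcategory against which the ghost number is measured coincides with the full category in question. The key observation is that $\pi^*$ sends the trivial $kA$-module $k$ to the trivial $kG$-module $k$, and being triangulated, it commutes with $\Omega$ up to canonical isomorphism. Thus for any morphism $g$ in $\stmod{kA}$ and every $i \in \Z$ one has a natural isomorphism
\[
  \sHom_{kG}(\Omega^i k, \pi^* g) \iso \sHom_{kA}(\Omega^i k, g),
\]
so $g$ is a ghost if and only if $\pi^* g$ is a ghost.

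First I would use this to check that for every $M \in \stmod{kA}$, the ghost length computed in $\stmod{kA}$ equals the ghost length of $\pi^* M$ computed in $\thickC{G}{k}$. Indeed, an $n$-fold composite of ghosts out of $M$ is zero in $\stmod{kA}$ if and only if its image under $\pi^*$ is zero in $\thickC{G}{k}$ (by faithfulness of the equivalence), and by the displayed isomorphism the classes of ghosts correspond under $\pi^*$. Conversely, any ghost composite out of $\pi^* M$ comes, via the quasi-inverse $i^*$, from a ghost composite out of $M$.

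Next I would take suprema. The supremum of $\gl(M)$ over $M \in \stmod{kA}$ is the ghost number of $kA$, while the supremum of $\gl(N)$ over $N \in \thickC{G}{k}$ is the ghost number of $kG$; essential surjectivity of $\pi^*$ identifies these two suprema, yielding the first equality. For the second claim, specializing to ghost number $1$ reduces the generating hypothesis for $kG$ to the generating hypothesis for $kA$, and the known characterization recalled in the introduction (\cite{GH for p,GH split,admit,GH per}) tells us that for a $p$-group $A$ this holds precisely when $A \iso C_2$ or $A \iso C_3$.

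There is no substantial obstacle: once Theorem~\ref{th:products} is in hand, the only thing to verify is that the equivalence $\pi^*$ respects the single test object $k$ and its $\Omega$-shifts, which is immediate because $\pi^*$ is triangulated and $\pi^*(k) \iso k$. The content of the corollary is therefore entirely packaged into Theorem~\ref{th:products}.
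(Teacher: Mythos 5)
Your proposal is correct and is exactly the argument the paper intends: the corollary is stated with an immediate \qed because it follows directly from the equivalence $\pi^*:\stmod{kA}\to\thickC{G}{k}$ of Theorem~\ref{th:products}, which preserves $k$ and hence matches up ghosts, ghost lengths, and the relevant thick subcategories on both sides. Your care in noting that $\stmod{kA}=\thickC{A}{k}$ and that ghost composites for $kG$ are measured inside $\thickC{G}{k}$ is precisely the point that makes the transport of the Freyd number legitimate.
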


To show that $\thick k = \stmod{B_0}$,
we compute the principal block idempotent using the next formula,
which makes use of the following terminology:
we say that a group element $g$ is \dfn{$p$-regular} if its order is not divisible by $p$;
otherwise it is said to be \dfn{$p$-singular};
an exception is that the identity element $1$ is both $p$-regular and $p$-singular.
We write $G_p$ for the set of $p$-singular elements in $G$ and
$G_{p'}$ for the set of $p$-regular elements.

\begin{thm}[{\cite[Theorem 1]{B}}]\label{th:B0 idem}
Let $k$ be a field of characteristic $p$,
let $G$ be a finite group, and
let $e_0 = \sum \epsilon_g g$ be the principal block idempotent in $kG$
with each $\epsilon_g$ in $k$.
Then
\[ \epsilon_g = |\{ (u,s) \in G_p \times G_{p'} \st us=g \}| \,\, |G_{p'}|^{-1} \]
for a $p$-regular element $g \in G$, and
$\epsilon_g = 0$ if $g$ is not $p$-regular.\qed
\end{thm}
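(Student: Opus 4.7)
First I would introduce the element
\[
\tilde{e} := \frac{1}{|G_{p'}|}\, U S \in kG, \qquad U := \sum_{u\in G_p} u, \quad S := \sum_{s\in G_{p'}} s,
\]
and observe that $\tilde e$ is central because $G_p$ and $G_{p'}$ are each unions of conjugacy classes (the quantity $|G_{p'}|^{-1}$ makes sense in $k$ by the classical fact that the number of $p$-regular elements is coprime to $p$). The theorem has two claims: (a) $e_0$ is supported on $p$-regular elements, and (b) for $g\in G_{p'}$, the coefficient of $e_0$ at $g$ coincides with the coefficient of $\tilde e$ at $g$. Claim (a) is Osima's theorem on block idempotents, a standard result in modular representation theory, so my focus would be on (b).

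To prove (b), the plan is to lift to a $p$-modular system $(\mathcal O, K, k)$ with maximal ideal $\mathfrak m\subset\mathcal O$, and use the explicit formula
\[
e_{B_0} \;=\; \sum_{\chi\in B_0} e_\chi \;=\; \frac{1}{|G|}\sum_{g\in G}\bigg(\sum_{\chi\in B_0}\chi(1)\chi(g^{-1})\bigg)\, g \;\in\; \mathcal O G,
\]
which reduces mod $\mathfrak m$ to $e_0\in kG$. Claim (b) then amounts to the congruence
\[
\frac{1}{|G|}\sum_{\chi\in B_0}\chi(1)\chi(g^{-1}) \;\equiv\; \frac{|\{(u,s)\in G_p\times G_{p'}: us=g\}|}{|G_{p'}|} \pmod{\mathfrak m}
\]
for each $p$-regular $g$.

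The main obstacle is establishing this character-theoretic congruence. My strategy would be to apply Brauer's second main theorem, which expresses $\chi(us)$ (for a $p$-element $u$ commuting with a $p$-regular $s$) in terms of generalized decomposition numbers and Brauer character values in $C_G(u)$. Expanding the right-hand count in irreducible characters via class-function orthogonality, one writes it as a double sum over $\mathrm{Irr}(G)$; the key observation is then that contributions from characters $\chi$ outside $B_0$ cancel modulo $\mathfrak m$, because their central characters separate blocks on the $p$-singular class sums appearing in the sum over $u\in G_p$. Matching the surviving terms from $B_0$ yields the claimed identity. As a sanity check, one can verify the formula directly for small cases such as $G=S_3$ at $p=2$, where $\tilde e$ has $p$-regular part $1 + (123) + (132)$, which is indeed the principal block idempotent of $\mathbb F_2 S_3$.
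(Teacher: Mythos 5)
The paper offers no proof of this statement: it is quoted directly from K\"ulshammer's paper \cite{B} and stamped with a \emph{qed}, so there is no internal argument to compare yours against; I can only judge the sketch on its own. Your framing is sound as far as it goes: $\tilde e$ is indeed central because $G_p$ and $G_{p'}$ are unions of classes, $|G_{p'}|$ is indeed prime to $p$, splitting the statement into Osima's theorem (support on $p$-regular elements) plus a coefficient comparison is the right decomposition, and the lift $e_{B_0}=\sum_{\chi\in B_0}e_\chi$ over a $p$-modular system is standard. But the entire content of the theorem is concentrated in the displayed character congruence, and that is precisely the step you do not prove; ``my strategy would be\dots the key observation is then that contributions\dots cancel'' is a restatement of the goal, not an argument.

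Moreover, the route you indicate has two concrete obstructions. First, expanding $N(g)$ by class-algebra constants gives $\frac{N(g)}{|G_{p'}|}=\frac{1}{|G|\,|G_{p'}|}\sum_{\chi}\chi(1)^{-1}U_\chi S_\chi\chi(g^{-1})$ with $U_\chi=\sum_{u\in G_p}\chi(u)$ and $S_\chi=\sum_{s\in G_{p'}}\chi(s)$; the prefactor $1/|G|$ does not lie in $\mathcal{O}$, and the individual summands need not lie in $\mathcal{O}$ either, so showing that the terms with $\chi\notin B_0$ ``cancel modulo $\mathfrak{m}$'' (which is all that block separation of central characters could give you) says nothing about the sum after division by $|G|$ --- you would need congruences modulo $|G|\mathfrak{m}$, or an argument carried out structurally in $Z(\mathcal{O}G)$ rather than characterwise. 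Second, Brauer's second main theorem, as you yourself state it, evaluates $\chi(us)$ for a $p$-\emph{element} $u$ \emph{commuting} with a $p$-regular $s$; your count ranges over all pairs $(u,s)\in G_p\times G_{p'}$ with $us=g$, almost none of which commute, $G_p$ consists of $p$-singular elements rather than $p$-elements, and the orthogonality expansion produces $\chi(u)$ and $\chi(s)$ separately, never $\chi(us)$. So the tool you name does not engage the quantity you need to evaluate. The $S_3$ sanity check is correct (there $\tilde e=\widehat{G}$ and its $2$-regular part is $1+(123)+(132)=e_0$), but a worked example plus a list of named theorems is not a proof; the core of K\"ulshammer's argument is missing.
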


\begin{cor}\label{co:thick=B0}
Let $k$ be a field of characteristic $p$.
Let $G = A \times B$, with $A$ being the Sylow $p$-subgroup of $G$.
Then $\thickC{G}{k} = \stmod{B_0}$ and $\locC{G}{k} = \StMod{B_0}$.
\end{cor}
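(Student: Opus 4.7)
The plan is to compute the principal block idempotent $e_0 \in kG$ explicitly using Theorem~\ref{th:B0 idem}, show that it is the tensor product $1_A \otimes \epsilon_B$ where $\epsilon_B = \frac{1}{|B|}\sum_{b \in B} b$, and then identify $\stmod{B_0}$ with the full subcategory of modules on which $B$ acts trivially (up to projective summands). Combined with Theorem~\ref{th:products}, which identifies $\thickC{G}{k}$ with this same subcategory, the corollary follows.

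First, I would identify $G_{p'}$ and $G_p$. Since $|A|$ is a power of $p$ and $|B|$ is coprime to $p$, the order of $(a,b) \in A\times B$ equals $|a|\cdot|b|$, so $(a,b)$ is $p$-regular precisely when $a = 1$. Thus $G_{p'} = \{1\}\times B$ and $|G_{p'}| = |B|$. Next, for a $p$-regular element $g = (1,b)$, I would count pairs $(u,s) \in G_p \times G_{p'}$ with $us = g$: writing $s = (1, s_B)$ forces $u = (1, b s_B^{-1})$, whose order is coprime to $p$, so $u$ is $p$-singular only via the convention that the identity is $p$-singular. Hence the only contributing pair is $(1, g)$, giving $\epsilon_{(1,b)} = 1/|B|$. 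Therefore
\[
e_0 = \frac{1}{|B|}\sum_{b\in B}(1,b) = 1_A \otimes \epsilon_B,
\]
where $\epsilon_B$ is the idempotent in $kB$ corresponding to the trivial representation (recall $|B|$ is invertible in $k$).

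Then I would observe that $B_0 = e_0\cdot kG = kA \otimes \epsilon_B kB \cong kA$, and a $kG$-module $M$ lies in $B_0$ precisely when $e_0 M = M$. Since $\epsilon_B$ is the central idempotent in $kB$ projecting onto the $B$-fixed part, this condition is equivalent to $B$ acting trivially on $M$. Passing to the stable category, a module is in $\stmod{B_0}$ iff its projective-free summand has trivial $B$-action; equivalently, by Theorem~\ref{th:products}, iff it lies in the essential image of $\pi^*:\stmod{kA}\to\stmod{kG}$, which is $\thickC{G}{k}$. The same argument with $\Mod$ replacing $\modu$ and $\loc$ replacing $\thick$ gives $\locC{G}{k} = \StMod{B_0}$.

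I do not anticipate a serious obstacle. The only delicate step is checking that the idempotent computation really yields $1_A \otimes \epsilon_B$, which hinges on the unusual convention making $1$ simultaneously $p$-regular and $p$-singular in Theorem~\ref{th:B0 idem}; this is precisely what forces the factor $1/|B|$ to be uniform across $B$. The rest is bookkeeping between block decomposition and the tensor structure of $kG = kA \otimes kB$.
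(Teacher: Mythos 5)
Your proof is correct and follows essentially the same route as the paper's: compute $e_0 = \frac{1}{|B|}\sum_{b\in B} b$ via Theorem~\ref{th:B0 idem}, observe that modules in $B_0$ therefore have trivial $B$-action, and conclude via Theorem~\ref{th:products}. Your count of the pairs $(u,s)$ is a correct (and slightly more detailed) justification of the idempotent formula that the paper simply asserts.
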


By Theorem~\ref{th:principal}, the conditions
$\thickC{G}{k} = \stmod{B_0}$ and $\locC{G}{k} = \StMod{B_0}$ are equivalent.

\begin{proof}
We compute that the principal idempotent $e_0$ is $\frac{1}{|B|}(\sum_{b\in B}b)$,
using Theorem~\ref{th:B0 idem}.
Since $b e_0 = e_0$ for each $b \in B$,
the projective-free modules in $\stmod{B_0}$ and $\StMod{B_0}$ all have trivial $B$ actions.
Thus, by Theorem~\ref{th:products}, the claim follows.
\end{proof}

One can also prove the corollary using Theorem~\ref{th:principal}.

Note that the only simple module in $\stmod{B_0}$ is the trivial module $k$ in this case.
Indeed, since $A \leq G$ is normal, a simple module $S$ has trivial $A$-action (Lemma~\ref{le:res-simple});
and if $S$ is in $\stmod{B_0}$, then it has trivial $B$-action too, 
by Theorem~\ref{th:products}.
Hence $S$ is the trivial module $k$.

\begin{rmk}
One can check that the algebra map 
$kA \to k(A \times B) \xrightarrow{e_0} e_0(k(A\times B))$ is an isomorphism.
It induces the equivalence $\stmod{B_0} \to \stmod{kA}$ with inverse $\pi^*$.
This also explains why we need to shrink the domain of the functor $i^*$
to get an equivalence.
\end{rmk}

We combine the discussion in Section~\ref{ss:simple-class-normal} and the results of this section
in the next proposition.
\begin{pro}\label{pr:direct product}
Let $k$ be a field of characteristic $p$.
Let $G = A \times B$, with $A$ being the Sylow $p$-subgroup of $G$.
Then, for $M \in \stmod{B_0}$,
\[  \gl(M) = \sgl(M) = \gl ( M\down_A ), \]
and for $N \in \stmod{kA}$,
\[ \sgl( N\up ) = \gl(N) = \gl( e_0(N\up) ) = \sgl ( e_0(N\up) ) . \]
\end{pro}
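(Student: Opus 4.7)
My plan is to derive all the equalities from three ingredients already established in the paper: Theorem~\ref{th:normal} (applied to the normal Sylow $p$-subgroup $A \trianglelefteq G$), the identification $\stmod{B_0} = \thick k$ from Corollary~\ref{co:thick=B0}, and the equivalence $\pi^* \colon \stmod{kA} \to \stmod{B_0}$ of Theorem~\ref{th:products}. Theorem~\ref{th:normal} directly yields $\sgl(M) = \gl(M\down_A)$ for $M \in \stmod{kG}$ and $\gl(N) = \sgl(N\up)$ for $N \in \stmod{kA}$, so what remains is (a) to equate ghost and simple ghost length on $\stmod{B_0}$, and (b) to identify $e_0(N\up)\down_A$ with $N$ in $\stmod{kA}$.

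The main new step is the claim $\gl(M) = \sgl(M)$ for $M \in \stmod{B_0}$. I would prove this from the block decomposition of $\stmod{kG}$ together with the fact (noted just after Corollary~\ref{co:thick=B0}) that $k$ is the only simple module in $B_0$. For the inequality $\sgl(M) \leq \gl(M)$: any ghost $Y \to Z$ between modules in $\stmod{B_0}$ is automatically a simple ghost, because $\sHom(\Omega^i S, Y) = 0 = \sHom(\Omega^i S, Z)$ for every simple $S$ outside $B_0$. So any composite of $n$ ghosts in $\thick k$ is already a composite of $n$ simple ghosts in $\stmod{kG}$. For the reverse inequality, given a composite of simple ghosts $M \to X_1 \to \cdots \to X_n$ in $\stmod{kG}$, block orthogonality forces each $X_i$ to decompose as $(X_i)_0 \oplus (X_i)_{\neq 0}$ and each $f_i$ to be block-diagonal, so the composite is determined by, and vanishes iff, the composite of the projections $(X_{i-1})_0 \to (X_i)_0$ does. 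Each projected map is a ghost in $\stmod{B_0}$, so the hypothesis $\gl(M) \leq n$ forces this composite to be zero.

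Combining this with Theorem~\ref{th:normal} finishes the first string of equalities. For the second, the non-trivial remaining piece is $\gl(e_0(N\up)) = \gl(N)$. I would first show $e_0(N\up)\down_A \iso N$ by identifying the two functors $e_0(\blank\up^G)$ and $\pi^*$ from $\stmod{kA}$ to $\stmod{B_0}$: both are left adjoint to the restriction $\down_A \colon \stmod{B_0} \to \stmod{kA}$. For $\pi^*$ this follows from Theorem~\ref{th:products}, which says $\pi^*$ is an equivalence with inverse $\down_A$; for $e_0(\blank\up^G)$ it follows from the induction-restriction adjunction together with the identity $e_0 M = M$ for $M \in \stmod{B_0}$. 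Uniqueness of left adjoints then gives $e_0(N\up) \iso \pi^*(N)$, whence $e_0(N\up)\down_A \iso N$. Applying the first part of the proposition to $e_0(N\up) \in \stmod{B_0}$ now yields
\[
 \gl(e_0(N\up)) = \sgl(e_0(N\up)) = \gl(e_0(N\up)\down_A) = \gl(N),
\]
and chaining with $\gl(N) = \sgl(N\up)$ completes the proof.

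The most delicate point is the block-by-block reduction used to show $\sgl(M) \leq \gl(M)$, since one must confirm both that a simple ghost becomes block-diagonal (which follows from the absence of morphisms between distinct blocks in $\StMod{kG}$) and that the resulting $B_0$-component is itself a ghost; once one writes out the $2 \times 2$ block form explicitly, both statements are immediate from the definitions.
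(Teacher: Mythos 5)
Your proposal is correct and follows essentially the same route as the paper: the paper's (much terser) proof likewise deduces $\gl(M)=\sgl(M)$ on $\stmod{B_0}$ from $k$ being the only simple module there, quotes Theorem~\ref{th:normal} for the restriction/induction equalities, and identifies $e_0(\blank\up)$ with $\pi^*$ (the paper via the algebra isomorphism $kA \to e_0 kG$, you via uniqueness of left adjoints --- both fine). One cosmetic slip: in your $\gl(M)=\sgl(M)$ step the two arguments are attached to the wrong inequalities --- ``every ghost in $\stmod{B_0}$ is a simple ghost'' yields $\gl(M)\leq\sgl(M)$, while the block-diagonal projection argument yields $\sgl(M)\leq\gl(M)$ --- but since both arguments are present and correct, the equality stands.
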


\begin{proof}
Since the trivial module $k$ is the only simple module in $\stmod{B_0}$,
$\gl(M) = \sgl(M)$ for $M \in \stmod{B_0}$.
The equalities $\sgl(M) = \gl( M\down_A )$ and $\sgl( N \up ) = \gl(N)$ are from Theorem~\ref{th:normal}.
That $\gl(N) = \gl(e_0(N\up))$ for $N \in \stmod{kA}$ is a result of Theorem~\ref{th:products},
as one checks that the functor $e_0 (\blank \up)$ is isomorphic to the equivalence
$\pi^*: \stmod{kA} \to \thickC{G}{k}$.
The last equality is a special case of the first.
\end{proof}

Note that one can't expect $\gl(N) = \gl( e_0(N\up) )$ for groups
that aren't direct products, even when $\thick k = \stmod{B_0}$.
For example, this fails for $A_4$, using the discussion in Section~\ref{ss:A_4}
and the fact that $e_0 = 1$ in this case.

\subsection{Finiteness of the ghost number and a lower bound}\label{ss:B_0}
Let $G$ be a finite group,
let $k$ be a field whose characteristic $p$ divides the order of $G$, and
let $P$ be a Sylow $p$-subgroup of $G$.
In this section, assuming that $\thick k = \stmod{B_0}$,
we prove that the ghost number of $kG$ is finite (Theorem~\ref{th:finite-gh-num})
and is greater than or equal to the ghost number of $kP$ 
(Proposition~\ref{pr:faithful}).

\begin{thm}\label{th:finite-gh-num}
Let $k$ be a field of characteristic $p$, and
let $G$ be a finite group with Sylow $p$-subgroup $P$.
Suppose that $\thickC G k = \stmod{B_0}$.
Then the ghost number of $kG$ is finite.
\end{thm}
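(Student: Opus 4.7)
The plan is to bound the ghost length $\gl(M)$ of every $M \in \thick k$ by a universal constant, via a comparison with the simple ghost length together with the finiteness of the simple ghost number from Remark~\ref{rm:finite-simple-number}. The essential input supplied by the hypothesis $\thickC{G}{k} = \stmod{B_0}$ is that each of the finitely many simple modules in the principal block will have finite length with respect to the ghost projective class $(\cF,\cG)$.

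First, let $\bS_0 \subseteq \bS$ denote the finite set of isomorphism classes of simple $kG$-modules lying in the principal block $B_0$. Because $\bS_0 \subseteq \stmod{B_0} = \thick k$, each $S \in \bS_0$ has $\len_{\cF}(S) < \infty$, so the integer
\[
  N \,:=\, \max_{S \in \bS_0} \, \len_{\cF}(S)
\]
is well-defined and finite, and by definition $\bS_0 \subseteq \cF_N$.

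Then I apply Lemma~\ref{le:Flengths} in $\stmod{kG}$ with $\bP = \{k\}$ and $\bQ = \bS_0$: the hypotheses $\bP \subseteq \thick \bQ$ (since $k \in \bS_0$) and $\bQ \subseteq \smd{\bP}_N = \cF_N$ both hold by construction, so for each $M \in \thick \bP = \thick k$ the lemma yields
\[
  \gl(M) \,\leq\, N \cdot \Flen_{\bS_0}(M).
\]
To bound the right-hand side I observe that any $\bS_0$-ghost between modules in $\stmod{B_0}$ is automatically a simple ghost in $\stmod{kG}$, because for every $T \in \bS \setminus \bS_0$ and every $L \in \stmod{B_0}$ the group $\sHom(\Omega^j T, L)$ already vanishes by orthogonality of blocks, making the simple-ghost condition at $T$ vacuous. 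Consequently $\Flen_{\bS_0}(M) \leq \sgl(M)$, and Remark~\ref{rm:finite-simple-number} gives $\sgl(M) \leq \rl(kG) < \infty$. Combining, $\gl(M) \leq N \cdot \rl(kG)$ uniformly in $M \in \thick k$, so the ghost number of $kG$ is finite.

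The only genuine choice is taking the test set $\bQ$ to be $\bS_0$ rather than all of $\bS$, so that it lies inside some $\cF_N$; the hypothesis $\thick k = \stmod{B_0}$ then does the real work by providing the uniform bound $N$. The inequality $\Flen_{\bS_0}(M) \leq \sgl(M)$ is a clean consequence of block orthogonality, so I do not anticipate any substantive obstacle.
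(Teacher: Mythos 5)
Your proposal is correct and follows essentially the same route as the paper: both bound the ghost number by $N$ times the Freyd number with respect to the simple modules in the principal block, using Lemma~\ref{le:Flengths}, with the hypothesis $\thickC{G}{k} = \stmod{B_0}$ supplying the uniform bound $N$ and block orthogonality reducing the $\bS_0$-Freyd number to the (finite) simple ghost number. Your write-up just makes the verification of the lemma's hypotheses and the orthogonality step slightly more explicit than the paper does.
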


In particular, the theorem holds for any $p$-group $G$,
since then $\thickC G k = \stmod{B_0} = \stmod{kG}$,
recovering~\cite[Theorem~4.7]{Gh in rep}.
Our proof follows the approach used in Proposition~\ref{pr:A_4}
for the alternating group $A_4$.

\begin{proof}
Recall that the simple ghost number of $kG$ is finite (Remark~\ref{rm:finite-simple-number}).
Since there are no non-zero maps between different blocks,
it follows that the Freyd number of $kG$ with respect to $\bQ$ is finite,
where $\bQ$ is the set of simple modules in the principle block.
On the other hand, since $\thickC G k = \stmod{B_0}$ and $\bQ$ is a finite set,
we have $\bQ \subseteq \cF_n = \smd{\bP}_n$ for some $n$, where $\bP = \{ k \}$.
It then follows from Lemma~\ref{le:Flengths} that the ghost number of $kG$ is 
bounded above by $n$ times the Freyd number of $kG$ with respect to $\bQ$,
and thus is finite.
\end{proof}

We call the Freyd number of $kG$ with respect to $\bQ$ 
the \dfn{simple ghost number of $B_0$}.

\begin{rmk}\label{rm:finite-ghost-number}
Note that each $M \in \thick k$ has finite ghost length.
But we need to find a universal upper bound to prove finiteness of the
ghost number.
One idea is to look at the radical sequence as was done for $p$-groups
in~\cite{Gh in rep}.
When $\thickC G k = \stmod{B_0}$,
the simple modules that can appear in the radical sequence for $M \in \thickC G k$
all have finite ghost lengths.
However, whether the ghost number is finite when $\thickC G k \neq \stmod{B_0}$ remains open,
since we cannot answer the following question proposed in~\cite{BCR}:
does there exist a simple module in the principal block with vanishing Tate cohomology?
Indeed, if there exists a simple module in $\stmod{B_0}$ but not in $\thickC G k$,
and its Tate cohomology does not vanish,
then it can appear in the radical sequence of a module $M \in \thickC G k$.
Hence the proof here does not apply to the case where
$\thickC G k \neq \stmod{B_0}$.
\end{rmk}

We state the question in the general case as a conjecture:
\begin{conj}\label{cj:finite-ghost-number}
Let $G$ be a finite group, and let $k$ be a field whose characteristic divides the order of $G$.
Then the ghost number of $kG$ is finite.
\end{conj}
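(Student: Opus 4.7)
The plan is to mirror the proof of Theorem~\ref{th:finite-gh-num}. That argument bounds $\Flen_k(M)$ uniformly for $M \in \thickC{G}{k} = \stmod{B_0}$ by a product of (a) the Freyd number of $kG$ with respect to the finite set $\bQ$ of simple modules in the principal block, and (b) an integer $n$ with $\bQ \subseteq \cF_n$, via Lemma~\ref{le:Flengths}. Step (a) is always finite: by Remark~\ref{rm:finite-simple-number}, the simple ghost number of $kG$ bounds the Freyd number with respect to any subset of simples, in particular $\bQ$. The entire difficulty is therefore concentrated in step (b): producing a uniform $n$ with every simple in $B_0$ lying in $\cF_n$, or else handling the simples in $\stmod{B_0} \setminus \thickC{G}{k}$ by some other route.

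First I would attempt to show that $\thickC{G}{k} = \stmod{B_0}$ holds in general, which would reduce the conjecture to Theorem~\ref{th:finite-gh-num} directly. As pointed out in Remark~\ref{rm:finite-ghost-number}, this is closely tied to the Benson--Carlson--Rickard question of whether some simple in the principal block has vanishing Tate cohomology. A negative answer, combined with standard thick-subcategory arguments (e.g.\ descending along restriction to the Sylow subgroup, where the statement is automatic), should force every simple in $B_0$ into $\thickC{G}{k}$, whence step (b) is automatic because $\bQ$ is finite and each of its elements then has a finite $\cF$-length.

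If instead a simple $S \in \stmod{B_0} \setminus \thickC{G}{k}$ exists, the radical-filtration strategy breaks down because $S$ may still appear as a subquotient of some $M \in \thickC{G}{k}$ but has no finite $\cF$-length. My fallback would be to exploit the faithful restriction $\down_P \colon \StMod{kG} \to \StMod{kP}$: a composite $f = f_1 \circ \cdots \circ f_N$ of ghosts in $\thickC{G}{k}$ vanishes if and only if $f \down_P = 0$ in $\stmod{kP}$, whose ghost number is known to be finite. The main obstacle — and the reason the statement remains only a conjecture — is that an individual $f_i \down_P$ need not be a ghost in $\stmod{kP}$, as illustrated for $A_4$ in Section~\ref{ss:A_4}. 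By adjunction, $\sHom_P(\Omega^i k, f_i \down_P) \cong \sHom_G(\Omega^i(k \up^G), f_i)$, so controlling the failure of $f \down_P$ to be a composite of many ghosts amounts to controlling how far $k \up^G$ is from lying in $\cF_n$ — which pulls us straight back to the relationship between $\thickC{G}{k}$ and $\stmod{B_0}$, and hence back to the BCR question. I expect this is where the real work lies.
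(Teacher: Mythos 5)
The statement you are asked to prove is Conjecture~\ref{cj:finite-ghost-number}; the paper offers no proof of it and explicitly records it as open (see Remark~\ref{rm:finite-ghost-number}). Your proposal is likewise not a proof: it is a plan that accurately reproduces the paper's own analysis of why the argument of Theorem~\ref{th:finite-gh-num} works when $\thickC{G}{k} = \stmod{B_0}$ and why it breaks otherwise, and you correctly locate the obstruction (simple modules in $\stmod{B_0}\setminus\thickC{G}{k}$, and the related BCR question), but neither of your two fallback routes is carried out, so the gap remains exactly where you say it is.

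One step in your plan is not merely incomplete but wrong. Your ``first attempt'' is to show that $\thickC{G}{k} = \stmod{B_0}$ holds for every $G$; but Theorem~\ref{th:principal} characterizes precisely when this equality holds (if and only if the centralizer of every element of order $p$ is $p$-nilpotent), and this group-theoretic condition fails for some groups, so this route is a dead end rather than a reduction. Relatedly, the implication you assert --- that a negative answer to the BCR question ``should force every simple in $B_0$ into $\thickC{G}{k}$'' --- does not hold: a simple module $S$ in $B_0$ can fail to lie in $\thickC{G}{k}$ while still having nonvanishing Tate cohomology, and this is exactly the scenario Remark~\ref{rm:finite-ghost-number} identifies as the one blocking the radical-filtration argument. (Note that any finitely generated module of finite $\cF$-length lies in $\thickC{G}{k}$, so \emph{every} simple in $\stmod{B_0}\setminus\thickC{G}{k}$ has infinite $\cF$-length, whether or not its Tate cohomology vanishes.) Your second fallback, via the faithful restriction to $P$, runs into the difficulty you yourself name --- $f_i\down_P$ need not be a ghost --- and you do not resolve it. So the proposal should be read as a correct diagnosis of the problem, not as a proof.
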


Now we determine a lower bound for the ghost number of $kG$.
Note that for a group $G$ with subgroup $H$, the induction functor sends
ghosts to ghosts and is faithful.
However, induction does not preserve $\thick k$ in general,
so this technique is of limited use in computing the ghost number of $kG$.
To try to remedy this, we can consider the composite $e_0(\blank\up^G)$
of induction with projection onto the principal block.
This will provide us with a ghost in $\thickC G k$ if we assume that
$\thickC G k = \stmod{B_0}$.

Note that we have adjunctions
\[ \up^G:   \stmod{kH} \rightleftarrows \stmod{kG}: \down_H \quad\text{and}\quad
   e_0(\blank): \stmod{kG} \rightleftarrows \stmod{B_0}: j , \]
where $j$ denotes the inclusion.
We show that the composite $e_0(\blank\up^G)$ is faithful in the case where
$H$ is a Sylow $p$-subgroup.

\begin{pro}\label{pr:faithful}
Let $k$ be a field of characteristic $p$, and
let $G$ be a finite group with Sylow $p$-subgroup $P$.
Then the functor
\[ e_0(\blank \up^G): \stmod{kP} \lra \stmod{B_0} \]
is faithful.
In particular, if $\thickC G k = \stmod{B_0}$, then
\[\text{ghost number of } kG \geq \text{ghost number of } kP.\]
\end{pro}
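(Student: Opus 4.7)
The plan is to show that $F := e_0(\blank\up^G)$ is faithful by producing, for every $kP$-module $N$, a natural split monomorphism from $N$ into $e_0(N\up^G)\down_P$. The lower bound on the ghost number will then follow formally from the fact that $F$ sends ghosts to ghosts.

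The key construction uses the Mackey decomposition (Theorem~\ref{th:mackey}), which exhibits $N$ as a direct $kP$-summand of $(N\up^G)\down_P$ via the unit $\eta_N(n) = 1\otimes n$, with retraction the transfer map $\mathrm{tr}_N\colon (N\up^G)\down_P \to N$ defined on coset representatives of $G/P$ by $g\otimes n \mapsto gn$ for $g\in P$ and $g\otimes n \mapsto 0$ for $g\notin P$; a routine check confirms $kP$-linearity. Composing with the block projection, one uses that $e_0 = \sum_g \epsilon_g g$ is supported on $p$-regular elements by Theorem~\ref{th:B0 idem}, together with the fact that the identity is the only $p$-regular element of the $p$-group $P$, to compute
\[ \mathrm{tr}_N\bigl(e_0\cdot(1\otimes n)\bigr) \;=\; \sum_{g\in P}\epsilon_g\, gn \;=\; \epsilon_1\, n. \]
Theorem~\ref{th:B0 idem} also gives $\epsilon_1 = |G_{p'}|^{-1}$, which is nonzero in $k$. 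Hence the composite $\tilde\eta_N\colon N \xrightarrow{\eta_N} (N\up^G)\down_P \to e_0(N\up^G)\down_P$ (where the second map is multiplication by $e_0$) is a split monomorphism with retraction $\epsilon_1^{-1}(\mathrm{tr}_N\circ \iota\down_P)$, where $\iota$ denotes the inclusion $e_0(N\up^G)\hookrightarrow N\up^G$; the splitting descends from $\modu{kP}$ to $\stmod{kP}$.

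Faithfulness then follows quickly. Suppose $f\colon M\to N$ in $\stmod{kP}$ satisfies $F(f) = 0$ in $\stmod{B_0}$. By orthogonality of blocks this is equivalent to $F(f) = 0$ in $\stmod{kG}$, and restriction preserves this vanishing by Lemma~\ref{le:background}(iii). Centrality of $e_0$ and naturality of $\eta$ give the identity $F(f)\down_P \circ \tilde\eta_M = \tilde\eta_N \circ f$ in $\stmod{kP}$, so $\tilde\eta_N\circ f = 0$; cancelling the split monomorphism $\tilde\eta_N$ yields $f = 0$.

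For the ghost-number inequality, assume $\thickC{G}{k} = \stmod{B_0}$. Using the induction-restriction adjunction together with the vanishing of stable maps between distinct blocks, one identifies naturally
\[ \sHom_G(\Omega^i k, F(M)) \;\cong\; \sHom_G(\Omega^i k, M\up^G) \;\cong\; \sHom_P(\Omega^i k, M), \]
so $F$ carries ghosts in $\stmod{kP}$ to ghosts in $\stmod{B_0} = \thickC{G}{k}$. If $n$ equals the ghost number of $kG$, then $F$ applied to any $n$-fold composite of ghosts in $\stmod{kP}$ vanishes, and faithfulness concludes. The only delicate point is identifying the coefficient $\epsilon_1$ and confirming its non-vanishing in $k$; the remainder is adjoint calculus and block orthogonality.
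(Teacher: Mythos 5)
Your proof is correct and follows essentially the same route as the paper: the split monomorphism $N \to e_0(N\up^G)\down_P$ with retraction built from the transfer/counit, the computation $\epsilon(e_0\otimes n)=\epsilon_1 n$ with $\epsilon_1=|G_{p'}|^{-1}$ invertible via Theorem~\ref{th:B0 idem}, and the ghost-preservation argument are all exactly the paper's argument. (Only a cosmetic quibble: the fact that restriction preserves stably trivial maps is just functoriality of $\down_P$ on stable categories, not really Lemma~\ref{le:background}(iii).)
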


We don't know of a counterexample to the last inequality.

\begin{proof}
It suffices to show that the unit map 
\begin{eqnarray*}
   M & \longrightarrow & j(e_0(M\up^G)) \down_P  \\
   m & \longmapsto & e_0 \otimes m
\end{eqnarray*}
of the composite adjunction is split monic.

It is well known that $\up^G$ is both left and right adjoint to $\down_P$, with
unit map $\eta: M \to M\up^G\down_P$ sending $m$ to $1 \otimes m$, and
counit map $\epsilon: M\up^G\down_P \to M$ sending $g \otimes m$ to $gm$ if $g \in P$
and to $0$ if $g \not \in P$.

The unit map for the adjunction
$e_0(\blank) : \stmod{kG} \rightleftarrows \stmod{B_0}: j$
is the natural projection $N \to j(e_0 N)$ by left multiplication by $e_0$.
Since the stable module category $\stmod{kG}$ decomposes into blocks,
it is easy to check that $e_0(\blank)$ is also right adjoint to $j$,
with counit the natural inclusion $j(e_0 N) \to N$.

The composite
\[ M \lra j(e_0(M\up^G))\down_P \lra M\up^G\down_P \llra{\epsilon} M \]
sends $m$ to $\epsilon(e_0 \otimes m)$.
We show that it is an isomorphism.
Since $P$ is a $p$-subgroup of $G$ and
the only possible non-zero coefficient $\epsilon_h$ for $h \in P$ is
$\epsilon_1 = |G_{p'}|^{-1}$ by Theorem~\ref{th:B0 idem},
one sees that $\epsilon(e_0 \otimes m) = \epsilon_1 m$.
But $\epsilon_1$ is invertible in $k$, so
the composite is an isomorphism.
It follows that $M \to (e_0(M\up^G))\down_P$ is split monic and
the functor $e_0(\blank\up^G)$ is faithful.

It is clear that the composite $e_0(\blank \up^G)$ preserves ghosts.
Hence $\gl(e_0 (L \up^G)) \geq \gl(L)$ for $L \in \stmod{kP}$, 
and the ghost number of $kG$ is greater than or equal to the ghost number of $kP$.
\end{proof}

We quote the next theorem to end this section.
It provides conditions for checking whether $\thickC G k = \stmod{B_0}$.
Recall that a finite group is said to be \dfn{$p$-nilpotent} if
$G_{p'}$, the set of $p$-regular elements of $G$, forms a subgroup.

\begin{thm}[{\cite[Theorem~1.4]{Principal}}]\label{th:principal}
Let $G$ be a finite group, and let $k$ be a field of characteristic $p$.
Then $\thick k = \stmod{B_0}$ 
if and only if
$\loc k = \StMod{B_0}$
if and only if
the centralizer of every element of order $p$ is $p$-nilpotent.\qed
\end{thm}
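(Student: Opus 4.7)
The plan is to split the theorem into two separate equivalences: the formal statement $\thick k = \stmod{B_0} \iff \loc k = \StMod{B_0}$, and the deeper local-global statement characterizing these in terms of centralizers.

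For the first equivalence, I would appeal to the standard machinery of compactly generated triangulated categories. The ambient category $\StMod{kG}$ is compactly generated, and the block decomposition presents $\StMod{B_0}$ as a compactly generated localizing subcategory whose subcategory of compact objects is exactly $\stmod{B_0}$, with a set of compact generators given by the simple modules of $B_0$. Since $k$ is compact and lies in $B_0$, the subcategory $\loc k$ is itself compactly generated and its compact part coincides with $\thick k$ (this is the same invocation of \cite[Lemma~2.2]{Neeman} used in the proof of Theorem~\ref{th:products}). Two compactly generated localizing subcategories agree if and only if their subcategories of compact objects agree, so the equivalence follows at once.

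For the second equivalence, I would first reduce the problem to simples: since $\stmod{B_0}$ is generated as a thick subcategory by its simple modules, the condition $\thick k = \stmod{B_0}$ holds precisely when every simple $kG$-module in $B_0$ lies in $\thick k$. The remaining task is to show that this condition on simples is equivalent to the $p$-nilpotence of every centralizer $C_G(x)$ for $x$ of order $p$. One direction should follow by local-global analysis via Brauer correspondence: $p$-nilpotence of $C_G(x)$ means that the only simple module in the principal block of $kC_G(x)$ is the trivial module, and then Brauer's main theorems together with fusion-control results force every simple in $B_0$ to be built from $k$ via the triangulated operations, hence to lie in $\thick k$. For the converse, a failure of $p$-nilpotence at some $C_G(x)$ produces a non-trivial simple $kC_G(x)$-module in the principal block of $C_G(x)$, and its Brauer correspondent yields a simple module in $B_0$ that cannot be assembled from $k$.

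The main obstacle is this second step, which is genuinely the technical heart of the theorem and requires the full apparatus of Brauer pairs, defect groups, and the fine local structure of principal blocks (Alperin's fusion theorem, Brauer's first and third main theorems). Rather than reproduce that machinery, it seems most honest to quote the result as the authors do, treating the triangulated-category half as a formal consequence of compact generation and citing the representation-theoretic half to Linckelmann's paper.
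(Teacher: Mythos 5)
The paper gives no proof of this theorem: it is quoted from \cite{Principal} and stated without proof, so your decision to cite the representation-theoretic content is exactly what the authors do, and your compact-generation argument (via \cite[Lemma~2.2]{Neeman} and the fact that $\StMod{B_0}$ is compactly generated by the simple modules in $B_0$) is a correct, standard justification of the formal equivalence $\thick k = \stmod{B_0} \iff \loc k = \StMod{B_0}$. Two small corrections: the source is Benson's paper \cite{Principal}, not Linckelmann's; and your gloss of the hard direction is not how that proof actually runs --- it proceeds through the nucleus, support varieties and Rickard's idempotent modules rather than through Brauer correspondence of simple modules, and indeed deciding whether a simple module in $B_0$ lies in $\thick k$ is precisely the delicate point (cf.\ Remark~\ref{rm:finite-ghost-number}). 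Since you explicitly decline to rely on that sketch and cite the result instead, this is a mislabelling rather than a gap.
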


\subsection{Dihedral groups at the prime 2}\label{ss:dihedral}
Let $G=D_{2ql}$ be a dihedral group of order $2ql$,
where $q$ is a power of $2$ and $l$ is odd, with presentation
$D_{2ql}= \langle x,y \st x^{ql}=y^2=(xy)^2=1 \rangle$.
Let $k$ be a field of characteristic $2$.
In this section, 
we will determine the ghost number and simple ghost number of $kD_{2ql}$
by analyzing the blocks.
(See Theorem~\ref{th:dihedral-odd} for the ghost number of $kD_{2ql}$ at an odd prime.)

We can compute the principal block idempotent of $kD_{2ql}$ using Theorem~\ref{th:B0 idem}
and the fact that $l = 1$ in $k$.

\begin{lem}
The $2$-regular elements of $D_{2ql}$ are exactly those in the subgroup $C_l=\langle x^q \rangle$.
The principal idempotent is $e_0 = 1 + x^q + x^{2q} + \cdots + x^{(l-1)q}$.\qed
\end{lem}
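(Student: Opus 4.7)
The plan is to verify the two claims in sequence: first identify the $2$-regular elements, then plug into the formula from Theorem~\ref{th:B0 idem}.

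For the first claim, I would list the elements of $D_{2ql}$ as rotations $x^i$ and reflections $x^i y$ with $0 \leq i < ql$. Every reflection satisfies $(x^i y)^2 = 1$ and is non-trivial, so reflections have order $2$ and are $2$-singular. A rotation $x^i$ has order $ql/\gcd(i,ql) = 2^a l/\gcd(i,2^a l)$ where $q = 2^a$; this quotient is odd if and only if $2^a$ divides $\gcd(i,2^a l)$, i.e., if and only if $q \mid i$. Hence the $2$-regular elements are precisely $\{1,x^q,x^{2q},\dots,x^{(l-1)q}\} = \langle x^q\rangle = C_l$.

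For the second claim, I would apply Theorem~\ref{th:B0 idem} directly. Since $|G_{p'}| = l$ is odd, we have $|G_{p'}|^{-1} = 1$ in $k$. For any $g \in C_l$, I need to count pairs $(u,s) \in G_p \times G_{p'}$ with $us = g$. Each $u \in G_p$ determines $s = u^{-1}g$, and the constraint $s \in G_{p'} = C_l$ together with $g \in C_l$ forces $u \in C_l$. But every non-identity element of $C_l$ has odd order and therefore lies outside $G_p$, so $G_p \cap C_l = \{1\}$; thus the unique admissible pair is $(1,g)$. Therefore $\epsilon_g = 1$ for each $g \in C_l$, and $\epsilon_g = 0$ for $g \notin G_{p'}$. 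Summing yields $e_0 = 1 + x^q + x^{2q} + \cdots + x^{(l-1)q}$, as claimed.

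There is no real obstacle here: the statement is a straightforward application of Theorem~\ref{th:B0 idem}, once one observes that the $2$-regular elements form the cyclic group $C_l$ generated by $x^q$ and that the intersection $G_p \cap G_{p'}$ inside $C_l$ is trivial apart from the identity. The only subtlety is remembering the convention that $1 \in G_p \cap G_{p'}$, which is exactly what produces the single contributing pair in the count.
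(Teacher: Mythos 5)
Your proof is correct and is exactly the computation the paper intends: the lemma is stated with no written proof beyond the remark that it follows from Theorem~\ref{th:B0 idem} together with $l=1$ in $k$, and you have supplied precisely those details (identifying $G_{2'}=C_l$, noting $G_2\cap C_l=\{1\}$ so each $g\in C_l$ admits the single pair $(1,g)$, and using that $l$ is odd so $|G_{2'}|^{-1}=1$ in characteristic $2$). Nothing is missing.
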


We regard $D_{2q}$ as the subgroup of $D_{2ql}$ generated by $x^l$ and $y$,
and so we have a natural unital algebra map $\alpha: kD_{2q} \to kD_{2ql} \to e_0 kD_{2ql}$.
Note that $D_{2q}$ is a Sylow $2$-subgroup of $D_{2ql}$.

\begin{lem}\label{le:D_2ql-even}
The algebra map $\alpha: kD_{2q} \to e_0 kD_{2ql}$ is an isomorphism.
\end{lem}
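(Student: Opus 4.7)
The plan is to show $\alpha$ is a $k$-algebra isomorphism by exhibiting matching $k$-bases on the two sides, so that bijectivity is immediate; the multiplicativity and unitality of $\alpha$ will then follow cheaply from the fact that $e_0$ is central.

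First I would verify centrality of $e_0$: the subgroup $C_l = \langle x^q \rangle$ is normal in $D_{2ql}$ (for instance, $yx^q y^{-1} = x^{-q} \in C_l$), so conjugation by any $g \in D_{2ql}$ permutes the elements of $C_l$ and fixes their sum $e_0$. Thus $e_0$ is central in $kD_{2ql}$, which gives $(e_0 a)(e_0 b) = e_0^2 ab = e_0 ab$ and $\alpha(1)=e_0$, confirming that $\alpha$ is a unital algebra map.

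Next I would decompose $D_{2ql}$ as a semidirect product. Since $|D_{2q}|=2q$ is a power of $2$ and $|C_l|=l$ is odd, $D_{2q} \cap C_l = \{1\}$, and $|D_{2q}|\cdot|C_l| = 2ql = |D_{2ql}|$. Combined with the normality of $C_l$, this yields $D_{2ql} = C_l \rtimes D_{2q}$, so every element of $D_{2ql}$ has a unique factorization $cd$ with $c \in C_l$ and $d \in D_{2q}$. Consequently, as a left $kC_l$-module,
\[
 kD_{2ql} \;=\; \bigoplus_{d \in D_{2q}} kC_l \cdot d,
\]
which is free of rank $2q$.

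Finally, since $e_0 c = e_0$ for every $c \in C_l$ (left multiplication by $c$ permutes $C_l$), we have $e_0 \cdot kC_l = k \cdot e_0$, so
\[
 e_0 \, kD_{2ql} \;=\; \bigoplus_{d \in D_{2q}} k \cdot (e_0 d),
\]
a $2q$-dimensional subspace with basis $\{e_0 d : d \in D_{2q}\}$. The map $\alpha$ sends the group basis $\{d : d \in D_{2q}\}$ of $kD_{2q}$ bijectively onto this basis, hence is an isomorphism. The only delicate point, which is really just careful bookkeeping, is the uniqueness of the factorization $D_{2ql} = C_l \cdot D_{2q}$, which is what ensures that the elements $e_0 d$ are genuinely linearly independent rather than colliding with each other.
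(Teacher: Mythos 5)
Your proof is correct, and it takes a genuinely different route from the paper's. The paper argues in two separate steps: surjectivity comes from showing that the algebra generators $e_0x$ and $e_0y$ of $e_0kD_{2ql}$ lie in the image (using $\gcd(q,l)=1$ to write $e_0x = e_0x^{kl}$, since $e_0x^q = e_0$), and injectivity then follows from a dimension count — $e_0kD_{2ql}$ is a projective $kD_{2ql}$-module, so its dimension is divisible by the order $2q$ of the Sylow $2$-subgroup, forcing equality with $\dim kD_{2q}$. You instead exploit the factorization $D_{2ql} = C_l \rtimes D_{2q}$ (valid since $C_l \trianglelefteq D_{2ql}$, $C_l \cap D_{2q} = 1$, and the orders multiply correctly) to exhibit $\{e_0 d : d \in D_{2q}\}$ as an explicit $k$-basis of $e_0 kD_{2ql}$: the elements $e_0 d$ have pairwise disjoint supports in the group basis by uniqueness of the factorization $g = cd$, so they are linearly independent, and they span because $e_0 \cdot cd = e_0 d$. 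This gets bijectivity in one stroke and is more elementary, as it avoids the representation-theoretic input about dimensions of projective modules; the paper's argument is shorter on the page but leans on that standard fact. (One trivial slip: the identity you need for $e_0\, kC_l = k\cdot e_0$ is $e_0 c = e_0$, which holds because \emph{right} multiplication by $c$ permutes $C_l$; your parenthetical says ``left,'' but since $C_l$ is abelian and central considerations make both versions true, nothing is affected.)
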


\begin{proof}
As an algebra, $e_0 kD_{2ql}$ is generated by $e_0 x$ and $e_0 y$.
Clearly, $e_0 y = \alpha (y)$ is in the image of $\alpha$.
And since $l$ is odd and $e_0 x^q = e_0$, we see that
$e_0 x = e_0 x^{kl}$ for some integer $k$.
Hence the map $\alpha$ is surjective.
Since $e_0 kD_{2ql}$ is projective as a $kD_{2ql}$-module,
its dimension is at least $2q$, which equals the dimension of $kD_{2q}$,
so $\alpha$ has to be an isomorphism.
\end{proof}

As a corollary, we can compute the ghost number of $kD_{2ql}$.

\begin{cor}\label{co:D_2ql-even}
The thick subcategory generated by $k$ is the same as the principal block,
\[ \thickC {D_{2ql}} k = \stmod{e_0 kD_{2ql}} , \]
and the ghost number of $kD_{2ql}$ is $\lfloor \frac{q}{2}+1 \rfloor$.
\end{cor}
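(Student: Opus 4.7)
The plan is to exploit Lemma~\ref{le:D_2ql-even} by transporting ghost-theoretic information from $kD_{2q}$ to $kD_{2ql}$ via the algebra isomorphism $\alpha : kD_{2q} \to e_0 kD_{2ql}$. Such an algebra isomorphism induces a triangulated equivalence $\alpha_* : \stmod{kD_{2q}} \to \stmod{e_0 kD_{2ql}}$. The key observation is that $\alpha_*$ matches the trivial $kD_{2q}$-module with the trivial $kD_{2ql}$-module viewed as an $e_0 kD_{2ql}$-module. Indeed, since $D_{2q}$ is a $2$-group, $kD_{2q}$ is local, hence $e_0 kD_{2ql}$ is local too, and each side has a unique simple module; on the right that unique simple must be the trivial $kD_{2ql}$-module, because $k$ lies in the principal block $B_0$ (one checks $e_0 \cdot 1 = l \cdot 1 = 1$ in characteristic $2$, as $l$ is odd).

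For the first assertion, since $D_{2q}$ is a $2$-group we have $\thickC{D_{2q}}{k} = \stmod{kD_{2q}}$. Applying the equivalence $\alpha_*$, which preserves thick subcategories and sends $k$ to $k$, yields $\thickC{D_{2ql}}{k} = \stmod{e_0 kD_{2ql}}$, where we use that $\stmod{e_0 kD_{2ql}}$ sits inside $\stmod{kD_{2ql}}$ as $\stmod{B_0}$, since a block's stable module category is a full triangulated subcategory of the whole stable category.

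For the ghost number, the same equivalence $\alpha_*$ preserves hom-sets and sends $k$ to $k$, so it carries ghosts to ghosts in both directions. Moreover, since there are no nonzero maps between different blocks of $\stmod{kD_{2ql}}$, the ghost length of any module lying in $\thick k = \stmod{B_0}$ is the same whether computed inside $\stmod{kD_{2ql}}$ or inside $\stmod{e_0 kD_{2ql}}$. Combining these observations, the ghost number of $kD_{2ql}$ equals the ghost number of $kD_{2q}$, which by~\cite{Gh num} is $\lfloor \frac{q}{2}+1 \rfloor$.

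The only nontrivial step is the matching of the two trivial modules under $\alpha_*$; once that is in hand, everything else is formal, being a direct transport of structure along an equivalence combined with the standard block decomposition. Note that the lower bound is also a consequence of Proposition~\ref{pr:faithful}, now that we know $\thick k = \stmod{B_0}$, but the route through $\alpha$ has the advantage of yielding both bounds simultaneously.
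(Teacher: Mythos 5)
Your proposal is correct and follows essentially the same route as the paper: both use the algebra isomorphism $\alpha$ of Lemma~\ref{le:D_2ql-even} to get a triangulated equivalence $\stmod{kD_{2q}} \to \stmod{e_0 kD_{2ql}}$ matching $k$ with $k$, and then transport $\thickC{D_{2q}}{k} = \stmod{kD_{2q}}$ and the ghost number across it. Your justification that the equivalence matches the trivial modules (via locality of $kD_{2q}$ and uniqueness of the simple module in each block) is a welcome detail that the paper leaves implicit.
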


Note that in this case, the lower bound given by
Proposition~\ref{pr:faithful} is an equality.

\begin{proof}
Since $\alpha$ is an isomorphism, it induces an equivalence
$\stmod{kD_{2q}} \to \stmod{e_0 kD_{2ql}}$
sending $M$ to $e_0(M\up^{D_{2ql}}_{D_{2q}})$.
The first statement follows from the facts that this equivalence
sends $k$ to $k$ and that $\thickC {D_{2q}} k = \stmod{kD_{2q}}$.
It also follows that
\[\text{the ghost number of } kD_{2ql} = \text{the ghost number of } kD_{2q}.\]
The second statement then follows from~\cite[Corollary~4.25]{Gh num}, which shows that 
the ghost number of $kD_{2q}$ is $\lfloor \frac{q}{2}+1 \rfloor$.
\end{proof}

Now we consider the simple ghost number of $kD_{2ql}$.

\begin{rmk}\label{rm:D_2ql-simple-length}
Note that the only simple module in the principal block is $k$, by Lemma~\ref{le:D_2ql-even}.
Also, the inverse to the equivalence $\stmod{kD_{2q}} \to \stmod{e_0 kD_{2ql}}$
is given by restriction.
It follows that, for $M \in \stmod{e_0 kD_{2ql}}$, we have
\[ \sgl(M) = \gl(M) = \gl (M \down_{D_{2q}}). \]
\end{rmk}

To compute the simple ghost number of $kD_{2ql}$,
it remains to consider the non-principal blocks.
 From now on, we assume that $k$ contains an $l$-th primitive root of unity $\zeta$.
Let $C_{ql}$ be the cyclic subgroup of $D_{2ql}$ generated by $x$.
We will show in Proposition~\ref{pr:D_2ql-block} that
inducing up is fully-faithful on each non-principal block of $kC_{ql}$,
using the following lemmas.

We first compute the idempotent decomposition of $1$ in $kC_{ql}$.

\begin{lem}\label{le:Cql-idem}
The identity $1 \in kC_{ql}$ has an decomposition into orthogonal primitive idempotents:
\[ 1 = \sum_{i=0}^{l-1} e_i,
\text{ with } e_i = \sum_{j=0}^{l-1}(\zeta^i x^q)^j. \]
The block corresponding to $e_i$ has exactly one simple module $k_i$,
the one-dimensional module on which $x^q$ acts as $\zeta^{l-i}$.
\end{lem}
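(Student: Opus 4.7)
The plan is to reduce everything to a decomposition of the group $C_{ql}$ and a Fourier-type computation that exploits the hypothesis $\mathrm{char}(k)=2$ together with the oddness of $l$. First I would observe that $C_{ql} = \langle x^l \rangle \times \langle x^q \rangle \iso C_q \times C_l$ because $\gcd(q,l)=1$, so $kC_{ql} \iso kC_q \otimes_k k\langle x^q\rangle$. Since $l$ is odd, $kC_l = k\langle x^q\rangle$ is semisimple, and since $k$ contains a primitive $l$-th root of unity $\zeta$, it splits as a product of $l$ copies of $k$. The claimed $e_i$'s should be the central idempotents of that product, pulled back to $kC_{ql}$.

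Next I would verify directly the formulas $e_i e_{i'} = \delta_{ii'}\, e_i$ and $\sum_i e_i = 1$ by a direct computation. Expanding
\[ e_i e_{i'} = \sum_{j,j'=0}^{l-1} \zeta^{ij+i'j'} x^{q(j+j')}, \]
collecting terms with $j+j' \equiv m \pmod{l}$, and using the standard orthogonality $\sum_{j=0}^{l-1}\zeta^{(i-i')j} = l\,\delta_{ii'}$ (in $k$) together with the crucial point, already used in the excerpt, that $l=1$ in $k$ because $\mathrm{char}(k)=2$ and $l$ is odd, gives $e_i^2 = e_i$ and $e_ie_{i'}=0$ for $i\neq i'$. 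The sum $\sum_i e_i = \sum_{j} x^{qj}\sum_i \zeta^{ij} = x^{0} = 1$ follows from the same orthogonality.

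A short calculation then yields $x^q \cdot e_i = \zeta^{-i} e_i = \zeta^{l-i} e_i$, which identifies the action of $x^q$ on the unique (up to isomorphism) simple quotient $k_i$ of the block $e_i kC_{ql}$. To see that $e_i$ is primitive and that there is indeed a unique simple in its block, I would note that the map $kC_q \to e_i kC_{ql}$ sending $y \mapsto e_i x^l$ is an algebra isomorphism (both sides have dimension $q$, and it is visibly surjective once one replaces $e_i x^l$ by an appropriate power landing in the subalgebra generated by $e_i$ and $x$). Since $kC_q$ is a local ring (as $q$ is a power of $p=2$), so is the block $e_i kC_{ql}$; therefore $e_i$ admits no further decomposition, and the block has a unique simple module, namely $k_i$.

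I do not expect any genuine obstacle here: the only subtlety is keeping track of signs/exponents so that the character $\zeta^{l-i}$ matches the formula for $e_i$, and making sure to invoke $l = 1$ in $k$ at precisely the right places so that the usual orthogonality relations survive the passage to characteristic $2$.
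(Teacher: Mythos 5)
Your argument is correct, and the computational core (verifying $e_ie_{i'}=\delta_{ii'}e_i$ and $\sum_i e_i=1$ via the geometric-sum orthogonality relations together with $l=1$ in $k$, and computing $x^q e_i=\zeta^{-i}e_i=\zeta^{l-i}e_i$) is exactly what the paper leaves as "easy to check." Where you genuinely diverge is in the primitivity and unique-simple claims: the paper simply cites the well-known fact that the $k_i$ form a complete list of the $l$ simple $kC_{ql}$-modules, so that $l$ nonzero orthogonal central idempotents must each carry exactly one simple and hence be primitive. You instead produce an explicit algebra isomorphism $kC_q\iso e_i kC_{ql}$ (using $\gcd(q,l)=1$ to write $e_i x$ as a power of $e_i x^l$ times a scalar) and conclude primitivity from locality of $kC_q$. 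Your route is more self-contained -- it avoids quoting the classification of simples -- and it gives strictly more information, namely the ring structure of each block; this is essentially the same device the paper uses one step later for the principal block of $kD_{2ql}$ (Lemma~\ref{le:D_2ql-even}), and it also recovers the description of the indecomposables in Lemma~\ref{le:C_ql-indecomposables}. The paper's counting argument is shorter but leans on quoted representation theory. Both are complete proofs of the stated lemma.
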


\begin{proof}
It is easy to check that the $e_i$'s are orthogonal and idempotent,
and that $e_i k_i = k_i$.
It is well known that the $k_i$'s are a complete list of simple $kC_{ql}$-modules,
so it follows that the idempotents are primitive.
\end{proof}

Since conjugation by $y$ in $D_{2ql}$ takes $e_0$ to $e_0$ and $e_i$ to $e_{l-i}$ for $i>0$,
we can deduce the idempotent decomposition for $kD_{2ql}$.

\begin{lem}\label{le:D2ql-idem}
The identity $1 \in kD_{2ql}$ has a decomposition into orthogonal primitive central
idempotents:
\[ 1 = e_0 + \sum_{i=1}^{\frac{l-1}{2}} e'_i,
\text{ with } e'_i = e_i + e_{l-i}
= \sum_{j=0}^{l-1}(\zeta^i x^q)^j
+ \sum_{j=0}^{l-1}(\zeta^{l-i} x^q)^j. \]
Moreover, the block corresponding to $e'_i$ has exactly one
simple module, namely $S_i := k_i \up_{C_{ql}} ^{D_{2ql}}$.
It follows that $\stmod{e'_i kD_{2ql}} = \thickC {D_{2ql}}{S_i}$.
\end{lem}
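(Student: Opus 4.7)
The plan is to verify the decomposition and orthogonality directly from Lemma~\ref{le:Cql-idem}, establish primitivity by analyzing conjugation by $y$ as an involution on the idempotents, and then identify $S_i$ via Mackey's theorem combined with Frobenius reciprocity.

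First I would lift the orthogonal decomposition $1 = \sum_{i=0}^{l-1} e_i$ of Lemma~\ref{le:Cql-idem} to $kD_{2ql}$. The key ingredient is the computation $y e_i y^{-1} = e_{l-i}$, which I would verify by expanding $y(\zeta^i x^q)^j y^{-1} = \zeta^{ij} x^{-qj}$ (using $yxy^{-1} = x^{-1}$) and reindexing via $j \mapsto l - j$ (using $x^{ql}=1$ and $\zeta^l=1$). Since $x$ commutes with each $e_i$, this shows that $e_0$ and the sums $e'_i = e_i + e_{l-i}$ for $1 \leq i \leq (l-1)/2$ are fixed by conjugation by both generators of $D_{2ql}$, hence central. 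Orthogonality and the identity $e_0 + \sum_i e'_i = 1$ then follow directly from the corresponding statements in $kC_{ql}$ together with the observation that $i \neq l-i$ whenever $i>0$ (as $l$ is odd). For primitivity: any central idempotent $f$ of $kD_{2ql}$ lies in the commutative algebra $kC_{ql}$, so must have the form $\sum_{i \in I} e_i$ for some $I \subseteq \{0,\dots,l-1\}$, and centrality in $kD_{2ql}$ additionally forces $I$ to be stable under the involution $i \mapsto l-i$. The minimal nonempty stable subsets are precisely $\{0\}$ and $\{i, l-i\}$ with $1 \leq i \leq (l-1)/2$, exhibiting $e_0$ and the $e'_i$ as the complete list of primitive central idempotents.

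To identify $S_i$, I would apply Mackey's theorem (Theorem~\ref{th:mackey}) with $H = L = C_{ql}$ (normal of index $2$) and representatives $\{1, y\}$ for the double cosets, yielding $S_i \down_{C_{ql}} \iso k_i \oplus (y \cdot k_i)$. A direct computation using $y x^q y^{-1} = x^{-q}$ shows that $x^q$ acts on $y\cdot k_i$ as $\zeta^{-(l-i)} = \zeta^i$, so $y \cdot k_i \iso k_{l-i}$. Since $i \neq l-i$, these are non-isomorphic simples interchanged by $y$, so $S_i$ has no proper $kD_{2ql}$-invariant subspace and is therefore simple. Restricting to $C_{ql}$ shows that $e_i$ and $e_{l-i}$ act as the identity on $k_i$ and $k_{l-i}$ respectively, so $e'_i$ acts as the identity on $S_i \down_{C_{ql}}$; hence $e'_i S_i = S_i$ and $S_i$ lies in the block of $e'_i$. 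For uniqueness, if $T$ is any simple in this block, then $T\down_{C_{ql}}$ has all its composition factors among $\{k_i, k_{l-i}\}$, so its socle contains $k_j$ for some $j \in \{i, l-i\}$; Frobenius reciprocity then gives a nonzero map $k_j \up \to T$. Using the standard isomorphism $k_i\up \iso k_{l-i}\up$ (induced modules from conjugate subgroups of a normal subgroup agree), either case yields a nonzero map $S_i \to T$, which must be an isomorphism by simplicity.

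Finally, the last statement follows from the block decomposition of $\stmod{kD_{2ql}}$: the simples in any block generate that block's stable module category as a thick subcategory (by iterated use of the radical filtration, cf.~Remark~\ref{rm:finite-simple-number}), and since $S_i$ is the only simple in the block of $e'_i$, it generates $\stmod{e'_i kD_{2ql}}$, giving $\stmod{e'_i kD_{2ql}} = \thickC{D_{2ql}}{S_i}$. The main technical nuisance is the bookkeeping for the index shift between $e_i$ (built from $\zeta^i$) and $k_i$ (on which $x^q$ acts as $\zeta^{l-i}$); tracking this carefully is what makes the pairing $\{e_i, e_{l-i}\} \leftrightarrow \{k_i, k_{l-i}\}$ coherent under the $y$-action and is essential for matching the block $e'_i$ with the simple $S_i$.
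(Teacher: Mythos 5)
Your overall strategy matches the paper's: lift the orthogonal idempotents from $kC_{ql}$, use conjugation by $y$ (which swaps $e_i$ and $e_{l-i}$) to see that $e_0$ and the $e'_i$ are central, and identify $S_i$ by restricting to $C_{ql}$ — the paper writes out the $2\times 2$ matrices for the action on $k_i\up_{C_{ql}}^{D_{2ql}}$, which is the same computation as your Mackey-plus-conjugate-module argument, and concludes simplicity the same way. Your Frobenius-reciprocity argument for uniqueness of the simple module in each block is a valid alternative to the paper's counting argument, though it becomes unnecessary once primitivity is established correctly.

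The one genuine gap is in your primitivity step: you assert that ``any central idempotent $f$ of $kD_{2ql}$ lies in the commutative algebra $kC_{ql}$'' as though this followed from centrality alone. It does not: a class sum of reflections is a perfectly good central element of $kD_{2ql}$ that does not lie in $kC_{ql}$, so centrality by itself does not confine the support to $\langle x\rangle$. The assertion is true for central \emph{idempotents}, but that is a theorem (Osima's theorem that block idempotents are supported on $p$-regular elements, combined with the fact that the $2$-regular elements of $D_{2ql}$ lie in $\langle x^q\rangle \subseteq C_{ql}$), not a formality, and you neither prove nor cite it. The cheapest repair is the paper's: $kD_{2ql}$ has exactly $(l+1)/2$ simple modules (one per $2$-regular conjugacy class, using that $\zeta \in k$), hence at most $(l+1)/2$ blocks; since you have exhibited $(l+1)/2$ pairwise orthogonal nonzero central idempotents summing to $1$, each must be primitive, and the one-simple-per-block claim also falls out by counting. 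Everything else in your argument goes through.
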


\begin{proof}
Clearly, the $e'_i$'s are orthogonal central idempotents.
They are primitive since there are exactly $(l+1)/2$ simple $kD_{2ql}$-modules~\cite[Theorem~3.2]{A}.
It follows that there is exactly one simple module in each block.

Define $S_i$ to be $k_i \up_{C_{ql}} ^{D_{2ql}} = kD_{2ql} \otimes_{C_{ql}} k_i$,
where $k_i$ is the simple $kC_{ql}$-module defined in Lemma~\ref{le:Cql-idem}.
With respect to the basis $\{ 1 \otimes 1,\, y \otimes 1 \}$ of $kD_{2ql} \otimes_{C_{ql}} k_i$,
it is easy to check that $S_i$ is represented by the following matrices:
\[x^l \mapsto \begin{bmatrix}
1 & 0 \\ 0 & 1 
\end{bmatrix}, \quad
x^q \mapsto \begin{bmatrix}
\zeta^{l-i} & 0 \\ 0 & \zeta^{i} 
\end{bmatrix}, \quad and \quad
y \mapsto \begin{bmatrix}
0 & 1 \\ 1 & 0 
\end{bmatrix}.
\]
And from this representation, one sees quickly that
$S_i \down_{C_{ql}} = k_i \oplus k_{l-i}$.
The action of $y$ on $S_i$ exchanges $k_i$ and $k_{l-i}$,
hence, as $kD_{2ql}$-modules,
both $k_i$ and $k_{l-i}$ generate the whole module $S_i$.
Thus $S_i$ is a simple module.
It is also clear that the module $S_i$ is in the block $e'_i kD_{2ql}$, and
so $\stmod{e'_i kD_{2ql}} = \thickC {D_{2ql}}{S_i}$.
\end{proof}

We next provide a list of all the indecomposable $kC_{ql}$-modules.
The result can be found in~\cite[p.~14, 34]{A}.
Recall that for each $1 \leq n \leq q$ there is a unique indecomposable $kC_q$-module $M_n$
of radical length $n$, and that these are all of the indecomposable $kC_q$-modules.

\begin{lem}[{\cite{A}}]\label{le:C_ql-indecomposables}
The modules $e_i(M_n \up^{C_{ql}})$, for $1 \leq n \leq q$ and $0 \leq i < l$, are
a complete list of the indecomposable $kC_{ql}$-modules. \qed
\end{lem}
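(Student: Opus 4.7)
\bigskip
\noindent\textbf{Proof proposal for Lemma~\ref{le:C_ql-indecomposables}.}
The plan is to reduce to the case of the cyclic $p$-group $C_q$ by splitting off the semisimple $l$-part. Since $\gcd(q,l)=1$, the cyclic group $C_{ql}$ decomposes as an internal direct product $C_{ql} \iso C_q \times C_l$, with $C_q = \langle x^l\rangle$ and $C_l = \langle x^q\rangle$; correspondingly, $kC_{ql} \iso kC_q \otimes_k kC_l$ as $k$-algebras. Because $l$ is odd (hence coprime to $\mathrm{char}\,k = 2$) and $k$ contains a primitive $l$-th root of unity $\zeta$, Maschke's theorem and the presence of $\zeta$ show that $kC_l$ is a split semisimple algebra whose simple modules are the one-dimensional modules $k_0,\ldots,k_{l-1}$ of Lemma~\ref{le:Cql-idem}. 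Thus $kC_l \iso \prod_{i=0}^{l-1} k\, e_i'$, with $e_i'$ the (properly normalized) idempotent projecting onto $k_i$, and hence
\[
  kC_{ql} \ \iso\ \bigoplus_{i=0}^{l-1} e_i' \, kC_{ql}, \qquad e_i' \, kC_{ql} \ \iso\ kC_q \otimes_k k_i \ \iso \ kC_q
\]
as $k$-algebras (the last isomorphism because $k_i$ is one-dimensional). So the block decomposition of $kC_{ql}$ consists of $l$ copies of $kC_q$.

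From this decomposition, the indecomposable $kC_{ql}$-modules are exactly the disjoint union, over $i$, of the indecomposables of each block $e_i' kC_{ql} \iso kC_q$. Since the indecomposable $kC_q$-modules are $M_1,\ldots,M_q$, we obtain $ql$ indecomposable $kC_{ql}$-modules, one for each pair $(n,i)$: namely, the pullback of $M_n$ along the projection $kC_{ql}\to e_i' kC_{ql} \iso kC_q$. Concretely, this pullback is the module $k_i \otimes_k M_n$, with $C_l$ acting through $k_i$ and $C_q$ acting through $M_n$.

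It remains to identify $e_i(M_n\up^{C_{ql}})$ with $k_i \otimes_k M_n$. Using the isomorphism $kC_{ql} \iso kC_l \otimes_k kC_q$, the induction formula gives
\[
  M_n\up^{C_{ql}} \ \iso\ kC_{ql} \otimes_{kC_q} M_n \ \iso\ kC_l \otimes_k M_n,
\]
on which $C_q$ acts through the second tensor factor and $C_l$ acts by left multiplication on the first. Applying $e_i$ kills $e_j' kC_l$ for $j\neq i$ and picks out the summand $k_i$ on the left factor, yielding $e_i(M_n\up^{C_{ql}}) \iso k_i \otimes_k M_n$ as claimed.

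The main subtle point is verifying the indecomposability of $k_i\otimes_k M_n$ and distinguishing the pairs $(n,i)$; both follow from the block decomposition once established, since the endomorphism ring of $k_i \otimes_k M_n$ over $kC_{ql}$ is $\End_{kC_q}(M_n)$ (a local ring), and different $(n,i)$ give modules in different blocks or of different dimensions. So the only real work is the block-wise identification above, and this is routine given Lemma~\ref{le:Cql-idem} and the formula for $e_i$.
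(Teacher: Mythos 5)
Your argument is correct and is essentially the standard one that the paper delegates to Alperin: split $kC_{ql}\iso kC_q\otimes_k kC_l$ using $\gcd(q,l)=1$, use that $kC_l$ is split semisimple to decompose into $l$ blocks each isomorphic to $kC_q$, and identify $e_i(M_n\up^{C_{ql}})$ with the pullback of $M_n$ to the $i$-th block via $M_n\up^{C_{ql}}\iso kC_l\otimes_k M_n$. The paper gives no proof of its own (it cites \cite[pp.~14, 34]{A}), and your write-up correctly fills in that citation, including the points that matter: indecomposability via the local endomorphism ring and the pairwise non-isomorphism of the $ql$ modules by block and dimension.
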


Now we can show that the induction functor induces an equivalence between the non-principal blocks 
of $kC_{ql}$ and $kD_{2ql}$.

\begin{pro}\label{pr:D_2ql-block}
For $i \neq 0$, let $B_i = e_i k C_{ql}$ be a non-principal block of $k C_{ql}$.  Then
the composite of functors
\[ \stmod{B_i} \longrightarrow \stmod{k C_{ql}} \xrightarrow{\,\up_{C_{ql}}^{D_{2ql}}} \stmod{k D_{2ql}} \]
is fully-faithful,
hence induces an equivalence $\stmod{B_i} \to \stmod{e'_i kD_{2ql}}$.
\end{pro}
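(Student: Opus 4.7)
The plan is to combine Frobenius reciprocity, Mackey's theorem, and the block decomposition of $kC_{ql}$ to get fully-faithfulness, and then use the presence of the simple module $S_i$ in the image (together with $\stmod{e'_i kD_{2ql}} = \thickC{D_{2ql}}{S_i}$) to pin down the essential image.

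First I would check that the composite lands in $\stmod{e'_i kD_{2ql}}$. The block $B_i$ is generated as a thick subcategory of $\stmod{kC_{ql}}$ by its unique simple module $k_i$ (Lemma~\ref{le:Cql-idem}), and $k_i\up_{C_{ql}}^{D_{2ql}} = S_i$ lies in the block $e'_i kD_{2ql}$ by Lemma~\ref{le:D2ql-idem}. Since induction is triangulated and $\stmod{e'_i kD_{2ql}}$ is thick in $\stmod{kD_{2ql}}$, the image of the composite is contained in $\stmod{e'_i kD_{2ql}}$.

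Next I would prove fully-faithfulness. For $M, N \in \stmod{B_i}$, Frobenius reciprocity (which descends to the stable categories, since both $\up_{C_{ql}}^{D_{2ql}}$ and $\down_{C_{ql}}$ preserve projectives) gives a natural isomorphism
\[ \sHom_{D_{2ql}}(M\up, N\up) \iso \sHom_{C_{ql}}(M,\, N\up\down). \]
Applying Mackey's theorem (Theorem~\ref{th:mackey}) to the normal subgroup $C_{ql}$ with coset representatives $\{1, y\}$ yields $N\up\down \iso N \oplus {}^y N$ as $kC_{ql}$-modules. Since conjugation by $y$ sends $x^q$ to $x^{-q}$, a short calculation using the formula for $e_j$ in Lemma~\ref{le:Cql-idem} shows that it permutes the primitive central idempotents of $kC_{ql}$ by $e_j \mapsto e_{l-j}$; in particular ${}^y N \in \stmod{B_{l-i}}$. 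Because $i \neq 0$ and $l$ is odd, we have $l-i \neq i$, so $B_i$ and $B_{l-i}$ are distinct blocks and $\sHom_{C_{ql}}(M, {}^y N) = 0$ by the block decomposition. Tracing through the adjunction, the map $f \mapsto f\up$ corresponds to the inclusion of $\sHom_{C_{ql}}(M, N)$ as the first summand of $\sHom_{C_{ql}}(M, N) \oplus \sHom_{C_{ql}}(M, {}^y N)$, and is therefore an isomorphism.

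Finally I would deduce the equivalence. The essential image of a fully-faithful triangulated functor out of an idempotent-complete category is a thick subcategory of the target; here it is a thick subcategory of $\stmod{e'_i kD_{2ql}}$ containing $S_i = k_i\up$. Since $\stmod{e'_i kD_{2ql}} = \thickC{D_{2ql}}{S_i}$ by Lemma~\ref{le:D2ql-idem}, the essential image exhausts $\stmod{e'_i kD_{2ql}}$, yielding the claimed equivalence. The main obstacle is the identification ${}^y N \in \stmod{B_{l-i}}$; once that is verified, everything else is formal.
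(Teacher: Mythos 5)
Your proposal is correct and follows essentially the same route as the paper: Mackey's theorem combined with the induction--restriction adjunction, with block orthogonality killing the conjugate summand ${}^yN \in \stmod{B_{l-i}}$, and then identification of the essential image as $\thick{S_i} = \stmod{e'_i kD_{2ql}}$. The only cosmetic difference is that you verify full-faithfulness on arbitrary pairs $(M,N)$ via the action of conjugation on the block idempotents, whereas the paper runs the unit-map argument on each indecomposable $e_i(M_n\up^{C_{ql}})$ and displays it only for endomorphisms; your version is, if anything, slightly more complete.
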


\begin{proof}
We begin by showing that $\up_{C_{ql}}^{D_{2ql}}$ is fully-faithful
when restricted to $\stmod{B_i}$.
Let $M := e_i(M_n \up^{C_{ql}})$ be one of the indecomposable $kC_{ql}$-modules
described in Lemma~\ref{le:C_ql-indecomposables}, and write $N := M_n \up^{C_{ql}}$.
Using Mackey's Theorem, we have
$M \up ^{D_{2ql}} \down _{C_{ql}} \iso e_i (N) \oplus y(e_i (N)) = e_i (N) \oplus e_{l-i} (N)$, and
the natural map $M \xrightarrow{\eta} M \up \down \iso e_i (N) \oplus e_{l-i} (N)$ is
an isomorphism onto $e_i (N)$.

Because $\up$ is left adjoint to $\down$, the following diagram commutes
\[
\xymatrix{
  \sHom_{C_{ql}}(M,M) \ar[r]^-{\up} \ar[rd]_{\eta_*} & \sHom_{D_{2ql}}(M\up, M\up) \ar[d]^{\iso}\\
                                                    & \ \ \sHom_{C_{ql}}(M, M\up\down).
}
\]
By the discussion in the previous paragraph, $\eta_*$ is an isomorphism,
so the horizontal map $\up$ is an isomorphism as well.
Since this is true for every indecomposable module in $\stmod{B_i}$,
it follows that the induction functor is fully-faithful when restricted to $\stmod{B_i}$, and
induces a triangulated equivalence between $\stmod{B_i}$ and its essential image.
Since $\stmod{B_i} = \thickC {C_{ql}}{k_i}$ (Lemma~\ref{le:Cql-idem}) and $k_i \up = S_i$,
the essential image of $\stmod{B_i}$ is $\stmod{e'_i kD_{2ql}} = \thickC {D_{2ql}}{S_i}$
(Lemma~\ref{le:D2ql-idem}), and
the claim follows.
\end{proof}

\begin{rmk}\label{rm:D_2ql-inverse-equivalence}
Note that the inverse of the equivalence is given by the composite of restriction 
and then projection onto the block $e_i kC_{ql}$.
\end{rmk}

We can now compute the simple ghost number of $kD_{2ql}$.

\begin{thm}\label{th:D2ql-sgn}
For $M \in \stmod{e'_i kD_{2ql}}$ with $i \neq 0$, we have
\[ \sgl(M) = \sgl(M \down_{C_{ql}}) = \gl (M \down_{C_{q}}). \]
For $M \in \stmod{e_0 kD_{2ql}}$, we have
\[ \sgl(M) = \gl (M \down_{D_{2q}}). \]
Hence the simple ghost number of $kD_{2ql} = $ the ghost number of $kD_{2ql} = \lfloor \frac{q}{2}+1 \rfloor$.
\end{thm}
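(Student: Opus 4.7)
The principal-block equality $\sgl(M) = \gl(M\down_{D_{2q}})$ for $M \in \stmod{e_0 kD_{2ql}}$ is essentially already recorded in Remark~\ref{rm:D_2ql-simple-length}: the trivial module is the only simple in the principal block, so $\sgl(M) = \gl(M)$ there, and the algebra isomorphism $\alpha\colon kD_{2q} \to e_0 kD_{2ql}$ of Lemma~\ref{le:D_2ql-even} yields an equivalence of stable module categories preserving $k$ with inverse $\down_{D_{2q}}$, hence preserving ghost length. This handles the principal-block case without any new argument.

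For $M \in \stmod{e'_i kD_{2ql}}$ with $i \neq 0$, my plan is to transport $\sgl$ through the equivalence of Proposition~\ref{pr:D_2ql-block}. Since maps between distinct blocks of $\stmod{kD_{2ql}}$ vanish and $S_i$ is the unique simple module in $e'_i kD_{2ql}$ by Lemma~\ref{le:D2ql-idem}, $\sgl(M)$ coincides with the Freyd length of $M$ with respect to the singleton $\{S_i\}$ computed inside $\stmod{e'_i kD_{2ql}}$. The equivalence $\up_{C_{ql}}^{D_{2ql}}\colon \stmod{B_i} \xrightarrow{\sim} \stmod{e'_i kD_{2ql}}$ sends $k_i$ to $S_i$ and has inverse $M \mapsto M_i := e_i(M\down_{C_{ql}})$ by Remark~\ref{rm:D_2ql-inverse-equivalence}, so pulling back identifies $\sgl(M)$ with the Freyd length of $M_i$ with respect to $\{k_i\}$, which equals $\sgl(M_i)$ computed in $\stmod{kC_{ql}}$ because $k_i$ is the only simple module lying in $B_i$.

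To obtain $\sgl(M) = \sgl(M\down_{C_{ql}})$, I would use Mackey's theorem to write $M\down_{C_{ql}} = M_i \oplus M_{l-i}$ with $M_{l-i} \cong y\cdot M_i$. Conjugation by $y$ is an algebra automorphism of $kC_{ql}$ swapping the blocks $B_i$ and $B_{l-i}$ and carrying $k_i$ to $k_{l-i}$, so the associated twist functor furnishes an equivalence $\stmod{B_i} \simeq \stmod{B_{l-i}}$ that preserves the simple ghost projective class, implying $\sgl(M_i) = \sgl(M_{l-i})$. Since simple ghost length of a direct sum is the maximum of those of the summands, $\sgl(M\down_{C_{ql}}) = \sgl(M_i) = \sgl(M)$. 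The remaining equality $\sgl(M\down_{C_{ql}}) = \gl(M\down_{C_q})$ is then immediate from Theorem~\ref{th:normal} applied to the normal Sylow $2$-subgroup $C_q \triangleleft C_{ql}$, taking $N = M\down_{C_{ql}}$.

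For the final claim, every module in $\stmod{kD_{2ql}}$ decomposes along the block decomposition and $\sgl$ of a direct sum is the maximum of the summands' $\sgl$'s, so the simple ghost number is the maximum of the block-wise suprema. The principal-block supremum equals the ghost number of $kD_{2q}$, namely $\lfloor q/2+1 \rfloor$, by transport across the equivalence from Corollary~\ref{co:D_2ql-even} together with~\cite[Corollary~4.25]{Gh num}, and this supremum is attained by a module lifted from $\stmod{kD_{2q}}$. Each non-principal block supremum is bounded above by the ghost number of $kC_q$, which is at most $\lfloor q/2+1 \rfloor$ by the known computation of ghost numbers of cyclic $2$-groups in~\cite{Gh in rep}. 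Thus the simple ghost number of $kD_{2ql}$ equals $\lfloor q/2+1 \rfloor$, matching the ghost number given by Corollary~\ref{co:D_2ql-even}. The main delicate point is the $y$-symmetry step, where one must check carefully that conjugation by $y$ descends to a stable equivalence $\stmod{B_i} \simeq \stmod{B_{l-i}}$ sending $k_i$ to $k_{l-i}$ and hence preserving the simple ghost projective class; once that is in hand the rest is essentially bookkeeping with the equivalences and the block decomposition.
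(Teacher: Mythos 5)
Your proof is correct and follows essentially the same route as the paper: transport simple ghost lengths across the block equivalences of Lemma~\ref{le:D_2ql-even} and Proposition~\ref{pr:D_2ql-block}, reduce to the cyclic subgroup via Theorem~\ref{th:normal}, and compare the blockwise suprema, with the principal block dominating. The only cosmetic difference is that you justify $\sgl(e_i(M\down_{C_{ql}})) = \sgl(e_{l-i}(M\down_{C_{ql}}))$ via the conjugation-by-$y$ twist, whereas the paper simply applies Proposition~\ref{pr:D_2ql-block} to both indices $i$ and $l-i$ (whose essential images are the same block $e'_i kD_{2ql}$); both arguments work.
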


\begin{proof}
We have equivalences $\stmod{B_i} \to \stmod{e'_i kD_{2ql}}$ and
$\stmod{kD_{2q}} \to \stmod{e_0 kD_{2ql}}$.
The equivalences preserve simple modules, hence radical lengths and simple ghost lengths.
Then, for $M \in \stmod{e'_i kD_{2ql}}$, we have
$\sgl(M) = \sgl(e_i(M \down_{C_{ql}})) = \sgl(e_{l-i}(M \down_{C_{ql}}))$
by Proposition~\ref{pr:D_2ql-block} and Remark~\ref{rm:D_2ql-inverse-equivalence}.
Since $M \down_{C_{ql}} = e_i(M \down_{C_{ql}}) \oplus e_{l-i}(M \down_{C_{ql}})$,
it follows that 
\[\sgl(M) = \sgl(M \down_{C_{ql}}). \]
And by Theorem~\ref{th:normal},
$ \sgl(M \down_{C_{ql}}) = \gl (M \down_{C_{q}}) .$

For $M \in \stmod{e_0 kD_{2ql}}$, we have seen in Remark~\ref{rm:D_2ql-simple-length} that
\[ \sgl(M) = \gl(M) = \gl (M \down_{D_{2q}}). \]
Since the ghost number of $kC_q$ is $\lfloor q/2 \rfloor$ (Lemma~\ref{le:C_p^r}), and the
ghost number of $kD_{2ql}$ is $\lfloor \frac{q}{2}+1 \rfloor$~\cite[Corollary~4.25]{Gh num},
it follows that the simple ghost length is maximized by $\sgl(M)$ for 
some $M \in \stmod{e_0 kD_{2ql}}$, and that
the simple ghost number of $kD_{2ql}$ equals its ghost number.
\end{proof}

\section{Groups with cyclic Sylow $p$-subgroups}\label{se:cyclic-Sylow}

We consider a group $G$ with a cyclic Sylow $p$-subgroup $P$ in this section.
When the Sylow $p$-subgroup is normal, we know from Section~\ref{ss:simple-class-normal} that
simple ghost lengths can be computed by restricting to $P$.
We show in Section~\ref{ss:normal-P} that, when $P$ is also cyclic,
the simple ghost length of a module in the principal block is equal to its ghost length 
and that
the finitely generated modules in the principal block are exactly those in $\thick k$.
We use this to compute the ghost numbers of dihedral groups at odd primes.
In Section~\ref{ss:SL(2,p)}, we study the group $SL(2,p)$ at the prime $p$,
which has a cyclic Sylow $p$-subgroup that is not normal.
Nevertheless, by restricting to the normalizer $L$ of $P$, we are able
to show that the simple generating hypothesis holds for $SL(2,p)$ for any $p$,
even though it fails for $L$ and $P$ when $p > 3$.

\subsection{The case of a cyclic normal Sylow $p$-subgroup}\label{ss:normal-P}

Let $k$ be a field of characteristic $p$, and let $G$ be a finite group with
cyclic Sylow $p$-subgroup $P = C_{p^r}$.
We assume that $k$ is algebraically closed and that $C_{p^r}$ is normal in $G$.

Since $P \leq G$ is normal, Theorem~\ref{th:normal} says that
\[ \sgl(M) = \gl ( M\down_P ) \]
for $M \in \stmod{kG}$.
In this section, using that $P$ is in addition cyclic, we are going to show that
\[ \sgl(M) = \gl (M) \]
for $M \in \stmod{B_0}$, 
as we found for direct products in Proposition~\ref{pr:direct product}.

Our approach is as follows.
We will show that all simple modules in the principal block $\StMod{B_0}$ are suspensions of the trivial module $k$.
Hence the simple ghost projective class and the ghost projective class
coincide when both are pulled back to $\StMod{B_0}$.
It then follows that $\thick k$ equals $\stmod{B_0}$ and that
for $M$ in $\stmod{B_0}$, its ghost length is the same
as its simple ghost length.

We say that a $kG$-module $M$ is \dfn{uniserial} if
the successive quotients in the radical sequence associated to $M$ are simple.
Note that this is equivalent to 
the successive quotients in the socle sequence associated to $M$ being simple.

An important fact about the representations of $G$ when its Sylow $p$-subgroup is normal and cyclic is that
the indecomposable modules are uniserial:

\begin{thm}[{\cite[pp.~42--43]{A}}]\label{th:uniserial}
Let $G$ be a finite group, and let $k$ be a field of characteristic $p$.
Assume that the Sylow $p$-subgroup $P$ of $G$ is normal and cyclic.
Then there are finitely many indecomposable $kG$-modules.
Every indecomposable module $M$ is uniserial and is characterised by
its radical length and the simple module $M/ \rad(M)$. \qed
\end{thm}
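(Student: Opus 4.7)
The plan is to reduce everything to the representation theory of the cyclic $p$-group $P = C_{p^r}$, using Lemma~\ref{le:res-simple} as the bridge between $kG$-modules and their restrictions. Recall that $kP$ is a Nakayama algebra whose indecomposables are the uniserial modules $M_1, \ldots, M_{p^r}$, with $M_{p^r} \cong kP$. The auxiliary fact we will exploit is that $|G/P|$ is coprime to $p$, so $k[G/P]$ is semisimple.

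First I would use the Schur--Zassenhaus theorem to write $G = P \rtimes H$ with $|H|$ coprime to $p$. The conjugation action of $H$ on $P$ induces an action on $\mathrm{rad}(kP) / \mathrm{rad}^2(kP) \cong k$, giving a one-dimensional character $\chi \colon H \to k^{\times}$. Since $P$ is normal, $\mathrm{rad}(kP) \cdot kG$ is a two-sided ideal of $kG$ with quotient $k[G/P] \cong kH$, and because this quotient is semisimple it equals $kG/\mathrm{rad}(kG)$. Hence the simple $kG$-modules are exactly the simple $kH$-modules pulled back along $G \to H$.

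Next I would show that every indecomposable projective $kG$-module is uniserial of radical length $p^r$. Let $P(S)$ be the projective cover of a simple $kG$-module $S$. Since $P(S) \downarrow_P$ is $kP$-projective by Lemma~\ref{le:background}(iii), it is a direct sum of copies of $kP$, and Lemma~\ref{le:res-simple} then forces $P(S)$ to have radical length exactly $p^r$ with each radical layer having $k$-dimension $\dim(S)$. The crux of the uniseriality claim is that the $i$-th radical layer of $P(S)$ is isomorphic, as a $kG$-module, to the twist $S \otimes \chi^i$, which is again simple; I would verify this by unpacking the $k(P \rtimes H)$-module structure of $P(S)$, using that multiplication by $x-1$ shifts layers while intertwining the $H$-action by $\chi$.

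Once the indecomposable projectives are known to be uniserial, $kG$ becomes a Nakayama algebra on both sides (using also that group algebras are self-injective, so indecomposable injectives coincide with indecomposable projectives). It is then a standard fact that every indecomposable module over a Nakayama algebra is a quotient of some indecomposable projective by a power of its radical; such a quotient is automatically uniserial and is determined up to isomorphism by its top $M/\mathrm{rad}(M)$ together with its radical length $n \in \{1, \ldots, p^r\}$. Finiteness of the list of indecomposables is then immediate. The main obstacle I foresee is the identification of each radical layer of $P(S)$ with the twist $S \otimes \chi^i$: a priori the indecomposable summands of $P(S) \downarrow_P$ could be permuted nontrivially by $H$, so pinning down the precise $kG$-action on each layer is the place where the concrete semidirect-product calculation has to be carried out carefully.
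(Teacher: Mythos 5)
Your argument is correct. The paper itself gives no proof of this theorem --- it is quoted from Alperin --- but what you propose is essentially the standard argument from that source, and it runs parallel to the material the paper develops immediately afterwards: your character $\chi$ is the module $W = \rad(Q)/\rad^{2}(Q) \iso \widetilde{\Omega}^{2}k$ of Lemma~\ref{le:Wn} and Remark~\ref{rm:module_W}, and your identification of the $i$-th radical layer of $P(S)$ with $S \otimes \chi^{i}$ is exactly the assertion that $Q \otimes S$ is uniserial with composition factors $W^{\otimes i} \otimes S$. The step you flag as delicate is indeed the crux, and your sketch of it is the right one: normality of $P$ gives $\rad^{i}(kG) = (x-1)^{i}kG$, and $g(x-1)g^{-1} \equiv \overline{\alpha(g)}\,(x-1) \bmod \rad^{2}(kP)$, so left multiplication by $(x-1)^{i}$ induces a surjection $P(S)/\rad P(S) \to \rad^{i}P(S)/\rad^{i+1}P(S)$ that is $G$-equivariant after twisting by the appropriate power of $\chi$; since both sides have dimension $\dim S$ (because $P(S)\downarrow_{P}$ is free of rank $\dim S$ and Lemma~\ref{le:res-simple} matches the two radical filtrations), it is an isomorphism and each layer is simple. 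In this formulation the worry about $H$ permuting the free summands of $P(S)\downarrow_{P}$ never arises. The concluding appeal to the structure theory of serial (Nakayama) self-injective algebras is standard and completes the proof.
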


Recall that in general 
there is a bijection between indecomposable projective $kG$-modules
and simple $kG$-modules given by the assignment
that sends a projective module $Q$ to its radical quotient $Q/ \rad(Q)${~\cite[Theorem~5.3]{A}}.
The inverse sends a simple module to its projective cover, i.e., the unique
indecomposable projective module that surjects onto it.
Also note that for a projective $kG$-module $Q$,
we have an isomorphism $Q/ \rad(Q) \iso \soc(Q)${~\cite[Theorem~6.6]{A}}.

When $P = C_{p^r} \leq G$ is cyclic and normal, we can say more.

\begin{lem}
Let $G$ be a finite group with cyclic normal Sylow $p$-subgroup $P = C_{p^r}$,
let $k$ be a field of characteristic $p$,
and let $Q$ be the projective cover of the trivial module $k$.
If $S$ is a simple module, then $Q \otimes S$ is its projective cover.
\end{lem}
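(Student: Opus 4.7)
The plan is to verify that $Q \otimes S$, with diagonal $G$-action, is an indecomposable projective $kG$-module whose top is isomorphic to $S$, which identifies it as the projective cover of $S$.

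First I would check projectivity. The standard isomorphism $kG \otimes_k V \iso kG^{\dim V}$ (sending $g \otimes v \mapsto g \otimes g^{-1}v$ to untwist the diagonal action) shows that tensoring with $S$ carries free modules to free modules, hence carries projectives to projectives; in particular $Q \otimes S$ is projective.

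Next, I would compute $\rad(Q \otimes S)$ and show it coincides with $\rad(Q) \otimes S$. Two ingredients make this go through. First, since $P$ is a normal Sylow $p$-subgroup, the augmentation ideal $I$ of $kP$ extends to a two-sided ideal $J \subseteq kG$ equal to $\rad(kG)$: the quotient $kG/J \iso k[G/P]$ is semisimple by Maschke (as $p \nmid [G:P]$), and $J$ is nilpotent because $I$ is. Second, by Lemma~\ref{le:res-simple}, $S\down_P$ is semisimple, and since $k$ is the only simple $kP$-module, $P$ acts trivially on $S$. Together these give, for $u \in P$, $(u-1)(q \otimes s) = uq \otimes s - q \otimes s = (u-1)q \otimes s$, hence
\[ \rad(Q \otimes S) = J(Q \otimes S) = I(Q \otimes S) = IQ \otimes S = \rad(Q) \otimes S. \]
Passing to the quotient yields $(Q \otimes S)/\rad(Q \otimes S) \iso (Q/\rad(Q)) \otimes S = k \otimes S = S$, which is simple.

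Finally, a projective module with simple top is necessarily indecomposable, since a nontrivial direct sum decomposition would decompose the top. Thus $Q \otimes S$ is an indecomposable projective with top $S$, and therefore is the projective cover of $S$. The only real subtlety in the plan is the identification $\rad(kG) = J$; everything else reduces to the short diagonal-action computation enabled by the triviality of the $P$-action on $S$, and in particular the uniseriality from Theorem~\ref{th:uniserial} is not actually needed.
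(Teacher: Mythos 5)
Your proof is correct, but it takes a genuinely different route from the paper's. The paper never computes $\rad(Q\otimes S)$ directly; instead it restricts to $P$: it first shows $Q\down_P$ is free of rank $1$ (via $\soc(Q\down_P)=\soc(Q)\down_P\iso k$), deduces by a dimension count that $(Q\otimes S)\down_P$ is free of rank $\dim S$, hence that $\soc(Q\otimes S)$ has dimension $\dim S$ by Lemma~\ref{le:res-simple}, and then identifies $\soc(Q\otimes S)\iso S$ from the inclusion $k\otimes S\subseteq \soc(Q\otimes S)$; indecomposability and the identification as the projective cover follow from the simplicity of the \emph{socle} together with $Q'/\rad(Q')\iso\soc(Q')$ for projectives. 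You instead identify the \emph{top} directly, via $\rad(kG)=I\cdot kG$ (with $I$ the augmentation ideal of $kP$, using normality of $P$ and Maschke for $k[G/P]$) and the observation that $P$ acts trivially on every simple $kG$-module, giving $\rad(Q\otimes S)=\rad(Q)\otimes S$. All the steps check out: $I\cdot kG$ is a nilpotent two-sided ideal with semisimple quotient, hence equals $\rad(kG)$; the untwisting isomorphism gives projectivity of $Q\otimes S$; and a finitely generated projective with simple top is indecomposable and is the projective cover of that top. Your radical computation is essentially the tensor-compatibility of Lemma~\ref{le:res-simple} (the same fact the paper invokes later as ``tensoring with $W$ preserves the radical layers''), so the two arguments are close in spirit; yours is arguably more self-contained in that it avoids the symmetric-algebra fact $Q'/\rad(Q')\iso\soc(Q')$, while the paper's socle computation is reused immediately afterwards in the proof of Lemma~\ref{le:Wn}. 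You are also right that cyclicity of $P$ plays no role here — neither proof uses it.
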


The proof we give here uses the idea in~\cite[p.~35]{A},
where it is shown that the projective cover of a simple module $S$ restricts to
a free $kC_{p^r}$-module of rank $\dim(S)$.

\begin{proof}
We first show that the restriction of $Q$ to $C_{p^r}$ is free of rank $1$.
Indeed, $Q \down_{kC_{p^r}}$ is projective, hence free, since $C_{p^r}$ is a $p$-group, and
the rank of $Q \down_{kC_{p^r}}$ is equal to the dimension of its socle.
But $C_{p^r}$ is normal in $G$, so,
by Lemma~\ref{le:res-simple},
$\soc(Q \down_{kC_{p^r}}) = \soc(Q) \down_{kC_{p^r}} = k$, and
the claim follows.

Since $Q \otimes M$ is projective for any $kG$-module $M$~\cite[Lemma~7.4]{A},
the module $Q \otimes S$ is projective, and
its restriction to $C_{p^r}$ is free.
By comparing dimensions, one sees that the restriction of $Q \otimes S$
is free of rank $\dim(S)$, and
by Lemma~\ref{le:res-simple} again,
the dimension of $\soc(Q \otimes S)$ is also equal to $\dim S$.
Since $S \iso k \otimes S$ is contained in $\soc(Q \otimes S)$,
we deduce that $S \iso \soc(Q \otimes S)$.
It follows that $Q \otimes S$ is indecomposable,
hence is the projective cover of $S$.
\end{proof}

We continue to write $Q$ for the projective cover of the trivial module $k$.
Since $Q\down_{kC_{p^r}} \iso kC_{p^r}$, the radical layers of $Q$ all have dimension $1$.
Now, let $W$ be the 1-dimensional simple module $\rad(Q) / \rad^2(Q)$.
We show that $W \iso \widetilde{\Omega}^2 k$.

\begin{lem}\label{le:Wn}
The module $W$ is isomorphic to the double desuspension of the trivial module $k$,
i.e., $W \iso\widetilde{\Omega}^2 k$.
Moreover, each composition factor of $Q$ is a tensor power $W^{\otimes n}$ of the module $W$.
\end{lem}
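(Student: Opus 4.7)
My plan is to handle the two claims separately: first identifying $\widetilde{\Omega}^2 k$ with $W$ by extending the minimal projective resolution of $k$ by one step, and then computing all of the radical layers of $Q$ by a direct calculation using the $kP$-module structure $Q \down_P \iso kP$. For the first claim, the short exact sequence $0 \to \rad Q \to Q \to k \to 0$ identifies $\widetilde{\Omega} k$ with $\rad Q$. By Theorem~\ref{th:uniserial}, $\rad Q$ is uniserial with top $W$, so by the previous lemma its projective cover is $Q \otimes W$. The next step of the minimal resolution therefore fits into
\[ 0 \to \widetilde{\Omega}^2 k \to Q \otimes W \to \rad Q \to 0, \]
and dimension counting gives $\dim \widetilde{\Omega}^2 k = 1$. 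Since $Q \otimes W$ is indecomposable projective, it is uniserial by Theorem~\ref{th:uniserial}, so its unique one-dimensional submodule is its socle. Tensoring with the one-dimensional module $W$ is an exact auto-equivalence of $\Mod{kG}$ (with inverse $\blank \otimes W^*$), so it commutes with $\soc$, yielding $\soc(Q \otimes W) \iso \soc(Q) \otimes W \iso k \otimes W \iso W$. Hence $\widetilde{\Omega}^2 k \iso W$.

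For the second claim, I would compute the radical layers of $Q$ explicitly. Let $y$ generate $P = C_{p^r}$ and set $x = y - 1$, so $kP = k[x]/(x^{p^r})$; since $k[G/P]$ is semisimple and $x$ is nilpotent, $\rad(kG) = x \cdot kG$ as two-sided ideals. Choose a $kP$-generator $q$ of $Q$; then $Q$ has $k$-basis $\{q, xq, \ldots, x^{p^r-1} q\}$ and $\rad^i Q = x^i Q$. Normality of $P$ gives $a \colon G \to (\Z/p^r)^*$ with $gyg^{-1} = y^{a(g)}$, whence $gxg^{-1} = (1 + x)^{a(g)} - 1 \equiv a(g) \cdot x \pmod{x^2}$, while $gq \equiv q \pmod{\rad Q}$ since $Q/\rad Q \iso k$ is trivial. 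Combining these,
\[ g(x^i q) = (gxg^{-1})^i (gq) \equiv a(g)^i \, x^i q \pmod{\rad^{i+1} Q}, \]
so $\rad^i Q / \rad^{i+1} Q$ is the one-dimensional module on which $g$ acts as $a(g)^i \bmod p$. Specialising to $i = 1$ identifies this character with $W$, and therefore $\rad^i Q / \rad^{i+1} Q \iso W^{\otimes i}$ for every $i$.

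The main subtlety I expect is the socle identity $\soc(Q \otimes W) \iso \soc(Q) \otimes W$ in the first step, which follows from $\blank \otimes W$ being an exact auto-equivalence of $\Mod{kG}$. Once that is in hand, the second step is essentially bookkeeping, with the conjugation formula for $gxg^{-1}$ modulo $x^2$ being the only nontrivial calculation.
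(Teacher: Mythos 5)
Your proposal is correct. The first half is essentially the paper's own argument: both identify $Q \otimes W$ as the projective cover of $\rad(Q) \iso \widetilde{\Omega} k$ (the paper constructs the surjection $f \colon Q \otimes W \to \rad(Q)$ by lifting $Q \otimes W \to W$ through $\rad(Q) \to W$; you invoke the preceding lemma directly), and both identify the one\nobreakdash-dimensional kernel with $\soc(Q \otimes W) \iso \soc(Q) \otimes W \iso W$. The second half is where you genuinely diverge. The paper stays basis-free: it compares radical layers along the surjection $f$, uses $\rad^n(Q \otimes W) \iso \rad^n(Q) \otimes W$ (tensoring with a one-dimensional module preserves radical layers), and concludes $\rad^n(Q)/\rad^{n+1}(Q) \iso W^{\otimes n}$ by induction. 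You instead pick a $kP$-generator $q$ of $Q$, write $\rad^i(Q) = x^i Q$ with $x = y-1$, and compute $g(x^i q) \equiv a(g)^i\, x^i q \pmod{\rad^{i+1}Q}$ from the conjugation formula $gxg^{-1} \equiv a(g)x \pmod{x^2}$. This is a valid and self-contained calculation; its bonus is that it simultaneously yields the explicit description of $W$ as the character $g \mapsto \overline{a(g)}$, which the paper only records afterwards in Remark~\ref{rm:module_W}, and it makes the identification of the $i$th layer with $W^{\otimes i}$ immediate rather than inductive. The trade-off is that you must justify $\rad(kG) = x\,kG$ and $\rad^i(Q) = x^iQ$, which you do correctly (and which also follows directly from Lemma~\ref{le:res-simple} applied to $Q$).
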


\begin{proof}
The map $Q \otimes W \to W$ factors through the quotient map $\pi: \rad(Q) \to W$
and gives a map $f: Q \otimes W \to \rad(Q)$.
Since $\ker(\pi) = \rad^2(Q)$ is the unique maximal submodule of $\rad(Q)$,
the map $f$ is surjective.
As we saw for $Q$, the radical layers of $Q \otimes W$ are also all 1-dimensional,
so $Q$ and $Q \otimes W$ have the same radical length.
Since the radical length of $\rad(Q)$ is one less than that of $Q \otimes W$,
the composite
\[ W \lra Q \otimes W \llra{f} \rad(Q)\]
is zero, where the map $W \to Q \otimes W$
is the inclusion of the last radical (which equals the socle) of $Q \otimes W$.
By comparing dimensions, one sees that this is a short exact sequence.
And since $\rad(Q) \iso \widetilde{\Omega} k$,
we have $W \iso \widetilde{\Omega}^2 k$.

To see that the composition factors of $Q$ are $W^{\otimes n}$,
first note that
\[ \rad^n (Q) / \rad^{n+1} (Q) \iso 
   \rad^{n-1} (Q \otimes W) / \rad^n (Q \otimes W)\]
for $1 \leq n \leq p^r-1$.
We get these isomorphisms by comparing the radical layers
along the surjective map $f: Q \otimes W \to \rad(Q)$,
using that both $Q$ and $Q \otimes W$ have 1-dimensional layers.
On the other hand,
$\rad^n (Q \otimes W) \iso \rad^n (Q) \otimes W$,
since tensoring with $W$ preserves the radical layers.
Thus
\[ \rad^{n-1} (Q \otimes W) / \rad^n (Q \otimes W) \iso
   (\rad^{n-1} (Q) / \rad^n (Q)) \otimes W. \]
Combining the two displayed isomorphisms and using that $W = \rad(Q)/\rad^2(Q)$, 
it follows by induction that
\[ \rad^n (Q) / \rad^{n+1} (Q) \iso W^{\otimes n}. \qedhere\]
\end{proof}

Note that $M \otimes W \iso \widetilde{\Omega}^2 M$ for any module $M$.
In particular, $W^{\otimes n} \iso \widetilde{\Omega}^{2n} k$.
Also note that, more generally,
the indecomposable projective module $Q \otimes S$ is uniserial
with composition factors $W^{\otimes n} \otimes S$.
Thus the following lemma, together with Lemma~\ref{le:Wn},
implies that the modules $W^{\otimes n}$ are all the simple modules in $\StMod{B_0}$.
(See also~\cite[Exercise~13.3]{A}.)
Thus the simple modules in $\StMod{B_0}$ are all in $\cF$,
and so simple ghosts and ghosts agree in the principal block.

\begin{lem}[{\cite[Proposition 13.3]{A}}]
Let $k$ be a field of characteristic $p$, and
let $G$ be a finite group with a cyclic normal Sylow $p$-subgroup $C_{p^r}$.
Then two simple modules $S$ and $T$ are in the same block if and only if
there exists a sequence of simple modules
\[ S = S_1, S_2, \ldots, S_m = T \]
such that $S_i$ and $S_{i+1}$ are composition factors of an indecomposable 
projective $kG$-module, for $1 \leq i < m$. \qed
\end{lem}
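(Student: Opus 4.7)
The plan is to prove the general fact, which holds for any finite-dimensional $k$-algebra, that two simple modules lie in the same block if and only if they are equivalent under the relation generated by ``appearing together as composition factors of an indecomposable projective module''. The hypothesis on the Sylow $p$-subgroup is not needed for the result itself, but together with Lemma~\ref{le:Wn} it makes the chains extremely explicit: $S \sim T$ becomes $T \iso W^{\otimes m}\otimes S'$ for some $S'$ and some $m \in \Z$. The easy direction is quick: an indecomposable projective $P$ is killed by every block idempotent except one, so every composition factor of $P$ lies in that same block; a chain $S = S_1,\ldots, S_m = T$ therefore keeps us in a fixed block.

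For the converse, let $\sim$ be the generated equivalence relation on isomorphism classes of simples, with equivalence classes $C_1,\ldots,C_N$, and set
\[ e_i := \sum_{S \in C_i} e_{P(S)}, \]
where $e_{P(S)}$ is a primitive idempotent of $kG$ whose associated indecomposable projective is $P(S)$. These $e_i$ are orthogonal idempotents summing to $1$, and the crux is to show each $e_i$ is \emph{central} in $kG$. Once centrality is established, $kG = \bigoplus_i e_i \, kG$ is a decomposition into two-sided ideals that refines the block decomposition; the easy direction shows each $e_i$ lies in a single block, so the indecomposability of the block central idempotents forces the $e_i$ to be exactly the block idempotents, proving the equivalence classes coincide with the blocks.

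To establish centrality, I would show $e_i \, kG \, e_j = 0$ for $i \neq j$, which immediately gives $e_i \, kG = e_i \, kG \, e_i = kG \, e_i$. Since $e_i \, kG \, e_j \iso \Hom_{kG}(kG \, e_j, kG \, e_i)$ as $k$-vector spaces, it suffices to show that a nonzero homomorphism $\phi \colon P(T) \to P(S)$ with $T \in C_j$ and $S \in C_i$ forces $S \sim T$. For this, take the largest $n \geq 0$ with $\im(\phi) \subseteq \rad^n P(S)$, which exists because $\rad^N P(S) = 0$ for $N$ large. The induced map from $P(T)$ to the semisimple quotient $\rad^n P(S)/\rad^{n+1} P(S)$ is then nonzero, so factors nontrivially through the simple top $T$ of $P(T)$ and exhibits $T$ as a composition factor of $P(S)$. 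Since $S$ is itself a composition factor of $P(S)$, we obtain $S \sim T$. The main obstacle is precisely this Nakayama-style extraction of an honest composition-factor relation from a possibly very ``deep'' homomorphism between indecomposable projectives; the remainder of the argument is formal manipulation of orthogonal central idempotents.
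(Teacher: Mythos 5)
Your proof is correct. Note first that the paper does not prove this lemma at all: it is quoted verbatim from Alperin (Proposition 13.3) with a citation and a \qed, so there is no argument in the paper to compare against. What you have written is the standard general ``linkage'' characterization of blocks, valid for any finite-dimensional algebra, and every step checks out: the easy direction via the fact that an indecomposable projective is annihilated by all but one block idempotent; the key reduction of centrality of $e_i$ to the vanishing of $e_i\,kG\,e_j$ for $i\neq j$; and the Nakayama-style step, where a nonzero $\phi\colon P(T)\to P(S)$ gives a nonzero map into the semisimple layer $\rad^n P(S)/\rad^{n+1}P(S)$ for the largest $n$ with $\im(\phi)\subseteq\rad^n P(S)$, which must kill $\rad P(T)$ and hence exhibits $T$ as a composition factor of $P(S)$. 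The final identification of the $e_i$ with the block idempotents, using that each $e_i$ lies in a single block (easy direction) while each primitive central idempotent is a sum of $e_i$'s, is also sound. Two cosmetic points: the isomorphism should read $e_j\,kG\,e_i\iso\Hom_{kG}(kGe_j,kGe_i)$ via $\phi\mapsto\phi(e_j)$ (your indices are transposed, which is harmless since you prove vanishing for all $i\neq j$); and to define $e_i$ cleanly you should fix one decomposition of $1$ into primitive orthogonal idempotents and sum those whose associated projective has top in $C_i$, since $P(S)$ may occur with multiplicity. You are also right that the cyclic normal Sylow hypothesis plays no role in the lemma itself; in the paper it is only used afterwards, via Lemma~\ref{le:Wn}, to identify the linkage class of $k$ with the modules $W^{\otimes n}$.
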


We can use the above observations to compute the ghost number of $kG$.

\begin{thm}\label{th:cyclic-Sylow}
Let $k$ be a field of characteristic $p$, and
let $G$ be a finite group with a cyclic normal Sylow $p$-subgroup $C_{p^r}$.
Then $\thickC G k = \stmod{B_0}$, and
a map in $\thickC G k$ is a ghost if and only if
its restriction to $\stmod{kC_{p^r}}$ is a ghost.
As a result,
\[
  \text{ghost number of }kG = \text{ghost number of }kC_{p^r} = \lfloor p^r/2 \rfloor .
\] 
Moreover, let $M$ be a uniserial $kG$-module of radical length $l$ in $\thickC G k$.
Then
\[
\gl(M) = \sgl(M) = \min(l,\, p^r - l).
\]
In particular, using the natural terminology, 
the ghost number of $kG$ is equal to the simple ghost number of $B_0$.
\end{thm}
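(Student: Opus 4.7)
The plan rests on the observation, just established, that every simple module in $\StMod{B_0}$ has the form $W^{\otimes n} \iso \widetilde{\Omega}^{2n} k$, and hence lies in $\cF = \smd{k}$. First I would deduce that $\thickC G k = \stmod{B_0}$. The inclusion $\thickC G k \subseteq \stmod{B_0}$ is automatic. Conversely, each $M \in \stmod{B_0}$ admits a finite composition series whose factors are simple modules of $B_0$; these all lie in $\thick k$ by the observation, so a routine induction on the length of the series places $M$ in $\thick k$. Next I would show that a map $f$ in $\thickC G k$ is a ghost if and only if $f\down_P$ is a ghost. One direction holds in general, since $k \iso k\up^G\down_P$ as a summand by Mackey. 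For the other, Theorem~\ref{th:normal} says that $f\down_P$ being a ghost means $f$ is a simple ghost; and within $\stmod{B_0}$ simple ghosts coincide with ghosts, because every simple in $B_0$ is a suspension of $k$, while maps out of simples in other blocks land in zero blocks of $B_0$-modules.

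For the ghost number, Proposition~\ref{pr:faithful} combined with the equality $\thickC G k = \stmod{B_0}$ just proved gives ghost number of $kG \geq$ ghost number of $kC_{p^r}$. For the reverse inequality, Theorem~\ref{th:normal} yields $\sgl(M) = \gl(M\down_P)$ for every $M \in \stmod{kG}$, and since ghost length equals simple ghost length inside $\thick k$ by the previous step, we obtain $\gl(M) = \gl(M\down_P) \leq$ ghost number of $kC_{p^r}$ for every $M \in \thickC G k$. The common value $\lfloor p^r/2 \rfloor$ is the known ghost number of $kC_{p^r}$, cited from~\cite{Gh in rep}. The final sentence of the theorem then follows immediately: since the simples in $B_0$ are suspensions of $k$, the Freyd number with respect to these simples coincides with the Freyd number with respect to $\{k\}$ inside $\thick k = \stmod{B_0}$.

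For the uniserial claim, let $M \in \thickC G k$ be uniserial of radical length $l$. By Lemma~\ref{le:res-simple} the radical series of $M\down_P$ agrees with that of $M$ upon restriction, so $M\down_P$ is uniserial of radical length $l$, and by Theorem~\ref{th:uniserial} it must be the unique indecomposable $kC_{p^r}$-module $M_l$ of that radical length. The ghost length of $M_l$ in $\stmod{kC_{p^r}}$ is computed in~\cite{Gh in rep} to be $\min(l, p^r - l)$, the formula being symmetric under $l \leftrightarrow p^r - l$ because $\widetilde{\Omega} M_l \iso M_{p^r - l}$. Combining with the identity $\gl(M) = \sgl(M) = \gl(M\down_P)$ from the previous paragraph yields the claimed value for $\gl(M)$. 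The main obstacle I anticipate is the careful verification that ghosts and simple ghosts coincide inside $\thickC G k$; one must use that $\Hom$ vanishes between distinct blocks in order to discard the simples that do not lie in $B_0$ when checking the simple ghost condition.
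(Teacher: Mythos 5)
Your proposal is correct and follows essentially the same route as the paper: the key point in both is that the simple modules of $B_0$ are suspensions of $k$, so that ghosts and simple ghosts coincide in $\stmod{B_0} = \thickC{G}{k}$, after which Theorem~\ref{th:normal} reduces everything to ghost lengths over $kC_{p^r}$. Your added details (the composition-series induction for $\thickC{G}{k} = \stmod{B_0}$ and the use of Proposition~\ref{pr:faithful} for the lower bound) are just explicit versions of steps the paper leaves implicit.
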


\begin{proof}
Since the simple modules in the principal block are contained in $\cF$,
the pullback of the simple ghost projective class to $\StMod{B_0}$ coincides with
the pullback of the ghost projective class to $\StMod{B_0}$.
It follows that $\thickC G k = \stmod{B_0}$, and $\gl(M) = \sgl(M)$ for a module $M$ in $\stmod{B_0}$.
Since $P \leq G$ is normal, $\sgl(M) = \gl(M \down_P)$ for $M \in \stmod{kG}$,
by Theorem~\ref{th:normal}.
Hence
\[ \gl(M) = \sgl(M) = \gl(M \down_P), \]
and we can compute the ghost lengths in $kG$ by restricting to $kC_{p^r}$.
The ghost lengths in $kC_{p^r}$ are computed in~\cite{Gh in rep}
(summarized in Lemma~\ref{le:C_p^r} below).
\end{proof}

\begin{rmk}\label{rm:module_W}
We give a concrete description of the module $W$~\cite[Exercise~5.3]{A}.
Let $x$ be a generator of the cyclic group $C_{p^r}$.
Then the one-dimensional module $W$ is given by the group homomorphism that
sends $g \in G$ to $\overline{\alpha(g)} \in k^{\times}$, where
$\alpha(g)$ is the integer such that $gxg^{-1}=x^{\alpha(g)}$ and
$\overline{\alpha(g)}$ is its image under the canonical map $\Z \to k$.
If we further compose this map with the self map on $k^{\times}$ that
takes $\alpha$ to $\alpha^n$, we get the module $W^{\otimes n}$.
Since $\overline{\alpha(g)}$ lands in $\F_p \subseteq k$,
we always have $W^{\otimes (p-1)} = k$.
\end{rmk}

Let $M$ be a non-projective uniserial module with radical length $l \geq 2$.
We give an explicit construction of a (weakly) universal simple ghost out of $M$.
Let $W^*$ be the dual of $W$, so $W \otimes W^* \iso k$.
We have
\begin{equation}\label{eq:W*}
M / \rad(M) \iso (\rad(M )/ \rad^2(M)) \otimes W^* \iso \rad(M \otimes W^*)/ \rad^2(M \otimes W^*).
\end{equation}
To see that the first isomorphism holds, note that it holds for the module $Q$,
hence for the modules $Q \otimes S$ with $S$ simple.
Since $M$ is a quotient of one of the uniserial modules $Q \otimes S$,
the isomorphism holds for $M$ too.
Recall by Theorem~\ref{th:normal} that a map $f$ is a simple ghost if and only if
its restriction to a Sylow $p$-subgroup is a ghost.
And for a $p$-group $P$, we know that a ghost $g: M \to N$ has
$\im (g) \subseteq \rad(N)$ and $\soc(M) \subseteq \ker (g)$ by~\cite[Corollary~2.6]{Gh in rep}.
Hence we consider the short exact sequences
\[ 0 \lra \soc(M) \lra M \llra{\pi} M/ \soc(M) \lra 0\]
and
\[ 0 \lra \rad(M \otimes W^*) \llra{i} M \otimes W^* \lra M \otimes W^* / \rad(M \otimes W^*) \lra 0. \]
Equation~\eqref{eq:W*} implies that $M/ \soc(M) \iso \rad(M \otimes W^*)$.
Now let $g$ be the composite
$M \xrightarrow{\pi} M/ \soc(M) \iso \rad(M \otimes W^*) \xrightarrow{i} M \otimes W^*$.
Then $\im (g) \subseteq \rad(N)$ and $\soc(M) \subseteq \ker (g)$.
By Lemma~\ref{le:res-simple}, the inclusions still hold when restricted to
the normal Sylow $p$-subgroup $C_{p^r}$.
Since $\Omega^2 k \iso k$ in $\stmod{kC_{p^r}}$,
the proof of~\cite[Proposition~2.1]{Gh in rep} shows that $g \down_{C_{p^r}}$ is a ghost.
So by Theorem~\ref{th:normal}, the map $g$ is a simple ghost.
One can check that the fibre of $g$ is $\soc(M) \oplus \Omega(M \otimes W^* / \rad(M \otimes W^*))$.
Thus $g$ is a weakly universal simple ghost.
This process can be iterated, producing composites $M \to M \otimes (W^*)^n$ of $n$ simple ghosts
which are nonzero for $n < \sgl(M)$. 
If $M$ is in the principal block, then these simple ghosts are ghosts,
and so we have exhibited the ghosts predicted by Theorem~\ref{th:cyclic-Sylow}.

\begin{thm}\label{th:dihedral-odd}
Let $D_{2ql}$ be a dihedral group, with $q$ a power of $2$ and $l$ odd.
Let $k$ be a field of characteristic $p$ which divides $2ql$.
If $p$ is odd, then the ghost number of $kD_{2ql}$ is 
$\lfloor p^r/2 \rfloor$, where $p^r$ is the $p$-primary part of $l$.
If $p$ is even, then the ghost number of $kD_{2ql}$ is
$\lfloor q/2 + 1 \rfloor$.
\end{thm}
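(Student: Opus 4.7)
The plan is to split the argument on the parity of $p$, reducing each case to a theorem already proved in the paper.

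When $p$ is even (so $p=2$), there is nothing new to prove: Corollary~\ref{co:D_2ql-even} already establishes that the ghost number of $kD_{2ql}$ equals $\lfloor q/2 + 1 \rfloor$.

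When $p$ is odd, the strategy is to verify the hypotheses of Theorem~\ref{th:cyclic-Sylow}, namely that $D_{2ql}$ has a cyclic normal Sylow $p$-subgroup $C_{p^r}$. Since $p$ is odd and $q$ is a power of $2$, the $p$-part of $|D_{2ql}| = 2ql$ equals the $p$-part $p^r$ of $l$. The rotation subgroup $C_{ql} = \langle x \rangle$ is normal in $D_{2ql}$ (index $2$), and inside this cyclic group the unique subgroup of order $p^r$, namely $\langle x^{ql/p^r}\rangle$, is a Sylow $p$-subgroup of $D_{2ql}$. Being the unique subgroup of its order in $C_{ql}$, it is characteristic in $C_{ql}$, and hence normal in $D_{2ql}$. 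This establishes that $D_{2ql}$ has a cyclic normal Sylow $p$-subgroup isomorphic to $C_{p^r}$.

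With this, Theorem~\ref{th:cyclic-Sylow} applies and gives
\[ \text{ghost number of } kD_{2ql} = \text{ghost number of } kC_{p^r} = \lfloor p^r/2 \rfloor, \]
completing the odd case. There is no real obstacle: once the Sylow structure is identified, both cases are immediate consequences of results already in hand (the $p=2$ case from Section~\ref{ss:dihedral}, the odd case from Section~\ref{ss:normal-P}). If any technicality arises, it would be a matter of ensuring that the standing algebraic-closure assumption in Section~\ref{ss:normal-P} does not restrict the final statement, which one handles by noting that ghost numbers are preserved under extension of the base field.
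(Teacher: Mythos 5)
Your proposal is correct and follows essentially the same route as the paper: the $p=2$ case is quoted from Corollary~\ref{co:D_2ql-even}, and the odd case is reduced to Theorem~\ref{th:cyclic-Sylow} after checking that the Sylow $p$-subgroup is cyclic and normal (your verification via the characteristic subgroup of $C_{ql}$ just makes explicit what the paper asserts without comment). Your closing remark about the algebraic-closure hypothesis is a reasonable extra precaution, but no further elaboration is needed.
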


\begin{proof}
If $p$ is odd, then its Sylow $p$-group is cyclic and normal, 
so its ghost number is given by Theorem~\ref{th:cyclic-Sylow}.
If $p$ is even, then its ghost number was computed in Corollary~\ref{co:D_2ql-even}.
\end{proof}

\subsection{The simple generating hypothesis for the group $SL(2,p)$}\label{ss:SL(2,p)}

In this section, we show that the simple generating hypothesis holds for $kG$,
where $G$ is the group $SL(2,p)$ of order $p(p-1)(p+1)$ and $k$ is a field of characteristic $p$.
Background on representations of $SL(2,p)$ can be found in~\cite[p.~14, 75]{A}.
We will also need to know about representations of the normalizer $N(P)$ of $P$ in $SL(2,p)$,
which illustrates the results of Section~\ref{ss:normal-P}.

We let $P \leq G$ consist of all elements of the form
$\begin{pmatrix}
1 & 0 \\ c & 1
\end{pmatrix}$.
$P$ has order $p$ and is a Sylow $p$-subgroup of $G$.
Let $L = N(P)$ be the normalizer of $P$ in $G$.
It consists of the elements of the form
$\begin{pmatrix}
a & 0 \\ c' & 1/a
\end{pmatrix}$.

For $i \in \Z$, consider the one-dimensional simple module $S_i$ of $L$ given by the group map
$L \to k^{\times}$ that sends
$\begin{pmatrix}
a & 0 \\ c' & 1/a
\end{pmatrix}$ to $a^i$.
Note that $S_0 = k$ is the trivial representation.
Clearly, $S_i \iso S_j$ if and only if $i \equiv j$ (modulo $p-1$) and
$S_i \otimes S_j \iso S_{i+j}$.
These are all of the simple $kL$-modules,
since there can be at most $p-1$ non-isomorphic indecomposable projective $kL$-modules.

Applying the discussion in Section~\ref{ss:normal-P} to the group $L$,
one obtains a $kL$-module $W \iso \widetilde{\Omega}^2 k$.  
By Remark~\ref{rm:module_W}, one can check that $W \iso S_{-2}$.
It follows that $kL$ has two blocks,
with the module $S_i$ in the principal block if and only if $i$ is even.
Moreover, $S_{-2i} \iso W^{\otimes i} \iso \widetilde{\Omega}^{2i} k$, using Lemma~\ref{le:Wn},
so all of the simple modules in the principal block are suspensions of the trivial module $k$.
By Theorems~\ref{th:normal} and~\ref{th:cyclic-Sylow}, the simple ghost number of $kL$,
the ghost number of $kL$ and the ghost number of $kP$ are all equal
to $\lfloor p/2 \rfloor$.

We will show below that the simple ghost number of $kG$ is actually $1$,
which is surprising since the simple generating hypothesis fails for its subgroups $P$ and $L$
when $p > 3$.
It is even more surprising in view of the next result, which shows that
$\stmod{kG}$ and $\stmod{kL}$ are equivalent.

The Sylow $p$-subgroup $P$ is cyclic of order $p$.
Thus it is a trivial intersection subgroup of $G$ (i.e., $gPg^{-1} \cap P$ is either $P$ or trivial), and
we have an equivalence between $\stmod{kG}$ and $\stmod{kL}$ by restriction and inducing up.
Moreover, since the equivalence preserves the trivial representation both ways,
the ghost number of $kG$ equals that of $kL$:

\begin{thm}[{\cite[Theorems~10.1, 10.3]{A}}]\label{th:trivial-intersection}
Let $G$ be a finite group, and let $k$ be a field whose characteristic divides the order of $G$.
Let $P$ be a Sylow $p$-subgroup of $G$ and let $L = N(P)$ be the normalizer of $P$ in $G$.
Assume that $P$ is a trivial intersection subgroup of $G$.  Then 
the restriction functor
\[\stmod{kG} \lra \stmod{kL}\]
is an equivalence, with inverse given by the inducing up functor.
In particular, the ghost number of $kG$ is equal to the ghost number of $kL$,
namely $\lfloor p/2 \rfloor$.\qed
\end{thm}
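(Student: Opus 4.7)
The plan is to establish the triangulated equivalence $\stmod{kG} \lra \stmod{kL}$ first, and then deduce the ghost number equality from the fact that restriction preserves the trivial module. The core input is a Mackey-theoretic decomposition that becomes dramatically simple under the TI hypothesis.

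For $N \in \stmod{kL}$, I would analyze $(N\up_L^G)\down_L$ using Mackey's theorem~\ref{th:mackey}. The double-coset sum over $L\backslash G/L$ contributes $N$ from the identity coset, and the unit $\eta_N : N \to (N\up^G)\down_L$ of the adjunction $\up^G \dashv \down_L$ is the natural inclusion onto this summand. For $s \in G \setminus L$, the key claim is that $L \cap sLs^{-1}$ has order coprime to $p$: since $P$ is normal in $L$, it is the unique Sylow $p$-subgroup of $L$, and similarly $sPs^{-1}$ is the unique Sylow $p$-subgroup of $sLs^{-1}$, so any $p$-subgroup of $L \cap sLs^{-1}$ lies in $P \cap sPs^{-1}$, which is trivial by the TI hypothesis since $s \notin N_G(P) = L$. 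Every module over a $p'$-group is projective by Maschke's theorem, and induction preserves projectives, so the summands for $s \neq 1$ are projective $kL$-modules. Thus $\eta_N$ is a stable isomorphism, which makes induction fully faithful.

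To promote this to an equivalence, it remains to show that the counit $\epsilon_M : (M\down_L)\up^G \to M$ is a stable isomorphism for all $M \in \stmod{kG}$. The triangle identity reads $\epsilon_M\down_L \circ\, \eta_{M\down_L} = \id_{M\down_L}$; since $\eta_{M\down_L}$ is a stable iso by the previous paragraph, so is $\epsilon_M\down_L$. The restriction functor $\down_L : \stmod{kG} \to \stmod{kL}$ is faithful, since restriction to a Sylow $p$-subgroup of $L$ factors through it and is faithful by Lemma~\ref{le:background}(i). A faithful additive triangulated functor reflects isomorphisms (the cone of a map whose image is an iso has zero image, and faithfulness applied to its identity forces it to be zero), so $\epsilon_M$ is itself a stable isomorphism. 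Hence $\up^G$ and $\down_L$ are mutually inverse triangulated equivalences.

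With the equivalence in hand, the ghost number equality is immediate: $\down_L$ sends the trivial $kG$-module to the trivial $kL$-module, so it intertwines the ghost projective classes and the thick subcategories generated by $k$ on the two sides; thus the ghost numbers of $kG$ and $kL$ agree. The value $\lfloor p/2 \rfloor$ is the ghost number of $kL$ computed just before the theorem statement by applying the results of Section~\ref{ss:normal-P} to the group $L$ with cyclic normal Sylow $p$-subgroup $P$. The main obstacle will be the Mackey-plus-TI step: one has to verify carefully that the subgroups $L \cap sLs^{-1}$ for $s \notin L$ really are $p'$-groups and that this implies projectivity after induction; everything subsequent is formal adjunction and triangulated manipulation.
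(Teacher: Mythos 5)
Your proposal is correct, and it supplies a genuine proof of a result that the paper itself does not prove: Theorem~\ref{th:trivial-intersection} carries a \verb|\qed| inside the statement and is simply cited from Alperin's book, with the ghost-number addendum deduced in the single sentence preceding the theorem. What you have written is essentially the standard Green-correspondence argument in the trivial-intersection case. The Mackey-plus-TI computation is right: for $s \notin L = N_G(P)$ a Sylow $p$-subgroup of $L \cap sLs^{-1}$ would lie in $P \cap sPs^{-1}$ (as $P$ and $sPs^{-1}$ are the unique Sylows of $L$ and $sLs^{-1}$), and TI together with $s \notin N_G(P)$ forces $P \cap sPs^{-1} = 1$; hence those double-coset summands are induced from $p'$-groups and therefore projective. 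The passage from a stably-iso unit to an equivalence via the triangle identity, faithfulness of $\down_L$ (which factors the faithful restriction to $P$, Lemma~\ref{le:background}(i)), and the fact that faithful triangulated functors reflect isomorphisms is clean and correct. The ghost-number conclusion is handled exactly as the paper handles it --- the equivalence preserves $k$ in both directions, so it matches ghost projective classes and $\thick{k}$ on the two sides, and $\lfloor p/2\rfloor$ is imported from the preceding computation for $kL$ (valid in the ambient situation where $P$ is cyclic of order $p$, which is the hypothesis actually in force in Section~\ref{ss:SL(2,p)} even though the displayed theorem statement is phrased generically). In short: where the paper outsources the heavy lifting to Alperin, you have reconstructed it; the argument you give is the one Alperin uses, so it buys self-containment rather than a different route.
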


Note that the equivalence does not preserve simple modules.
By Theorem~\ref{th:trivial-intersection},
to study the simple ghost number of $\StMod{kG}$,
it is equivalent to study the pullback projective class of $(\cS, \sG)$ in $\StMod{kL}$,
i.e., the projective class in $\StMod{kL}$ generated by the modules $S \down_L$,
for $S$ a simple $kG$-module.
We are going to show that this projective class contains all finitely-generated modules, and
it will follow that the simple generating hypothesis holds for $kG$.

\begin{thm}\label{th:SL2p}
Let $G = SL(2,p)$.
Every module in $\stmod{kG}$ is a direct sum of suspensions of simple modules.
In particular, the simple generating hypothesis holds for $kG$.
\end{thm}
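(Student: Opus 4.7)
The strategy is to use the stable equivalence $\stmod{kG} \simeq \stmod{kL}$ given by restriction (Theorem~\ref{th:trivial-intersection}) to transport the problem to the normalizer $L = N(P)$, where the analysis of Section~\ref{ss:normal-P} applies since $P$ is cyclic and normal in $L$. Writing $V_n$ for the unique $n$-dimensional simple $kG$-module ($1 \leq n \leq p$; the Steinberg $V_p$ is projective), it suffices, by Krull--Schmidt and the fact that the equivalence is triangulated, to show that every indecomposable module in $\stmod{kL}$ is stably isomorphic to $\Omega^a(V_n \down_L)$ for some $a \in \Z$ and some $1 \leq n \leq p - 1$.

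The first step is to identify the modules $V_n \down_L$ explicitly. Using the natural basis of $V_n$ as the $(n-1)$-st symmetric power of the standard representation, one checks that the filtration by $e_2$-degree is preserved by $L$, with torus weights decreasing by $2$ at each step, so $V_n \down_L$ is indecomposable (hence, by Theorem~\ref{th:uniserial}, uniserial) with top $S_{n-1}$ and radical length $n$. The second step is to compute $\Omega$ on uniserial $kL$-modules: if $M$ is uniserial with top $S_i$ and radical length $n$, then its projective cover is the uniserial module of length $p$ with top $S_i$, whose composition factors from top to bottom are $S_i, W \otimes S_i, \ldots, W^{\otimes(p-1)} \otimes S_i$ by Lemma~\ref{le:Wn} (using $W \iso S_{-2}$). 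The kernel of the projective cover is therefore uniserial of length $p - n$ with top $W^{\otimes n} \otimes S_i \iso S_{i-2n}$, so $\Omega M$ is uniserial of length $p - n$ and top $S_{i-2n}$.

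Iterating yields $\Omega^{2k}(V_n \down_L)$ uniserial of length $n$ with top $S_{n-1-2k}$, and $\Omega^{2k+1}(V_n \down_L)$ uniserial of length $p-n$ with top $S_{-n-1-2k}$, where indices are taken mod $p-1$ (using $-2p \equiv -2$). Assume $p$ is odd, so $p - 1$ is even (the case $p = 2$ is immediate as $L = P = C_2$ and the only non-projective simple is $V_1 = k$). Then the $\Omega$-orbit of $V_n \down_L$ consists of all uniserial $kL$-modules of radical length $n$ or $p - n$ whose top has index congruent to $n - 1 \pmod 2$, and has size $p - 1$. Since the top of $V_{p-n} \down_L$ is $S_{p-n-1}$, an index of parity opposite to $n - 1$ when $p$ is odd, the orbit of $V_n \down_L$ is disjoint from that of $V_{p-n} \down_L$, and by comparing radical lengths, the $p - 1$ orbits arising from $V_1, \ldots, V_{p-1}$ are pairwise disjoint.

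The conclusion then follows by counting: by Theorem~\ref{th:uniserial}, the non-projective indecomposable $kL$-modules are the uniserial modules of length $1 \leq m \leq p - 1$ with top $S_j$ for $j \in \Z/(p-1)$, giving exactly $(p-1)^2$ of them, which matches the total size of the $p - 1$ disjoint orbits of size $p - 1$. Hence every object of $\stmod{kL}$ decomposes as a direct sum of suspensions of the $V_n \down_L$, and transporting back through the equivalence gives the first assertion. The simple generating hypothesis follows at once: writing any object $M \in \stmod{kG}$ as $\bigoplus_i \Omega^{a_i} V_{n_i}$, a simple ghost $f$ out of $M$ satisfies $\sHom(\Omega^{a_i} V_{n_i}, f) = 0$ for each $i$, whence $\sHom(M, f) = 0$ and $f = 0$. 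The main obstacle is the parity bookkeeping that guarantees the $\Omega$-orbits really are pairwise disjoint, rather than collapsing $V_n$ with $V_{p-n}$ as a naive count might suggest.
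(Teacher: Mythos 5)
Your proposal is correct and follows essentially the same route as the paper: transfer to $L=N(P)$ via the trivial-intersection equivalence, identify the restrictions $V_n\down_L$ as uniserial modules with top $S_{n-1}$, and show their $\Omega$-orbits exhaust the non-projective indecomposables classified by Theorem~\ref{th:uniserial}. The only cosmetic difference is the last step: the paper explicitly exhibits each $M_{i,j}$ as $\Omega^{2k}V_i$ or $\Omega^{2k}\widetilde{\Omega}V_{p-i}$ according to the parity of $i+j$, whereas you reach the same conclusion by checking the $p-1$ orbits are pairwise disjoint of size $p-1$ and counting against $(p-1)^2$; both verifications are sound.
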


Note that despite the equivalence of Theorem~\ref{th:trivial-intersection},
we already observed that the simple generating hypothesis does not hold for $kL$
unless $p$ is $2$ or $3$.

\begin{proof}
By the remarks immediately preceding the theorem, it suffices to show that the modules $S \down _L$,
with $S$ a simple module in $\stmod{kG}$,
generate everything in $\stmod{kL}$ under direct sums, suspensions and retracts.

By Theorem~\ref{th:uniserial}, the indecomposable $kL$-modules are $M_{i,j}$,
for $1 \leq i \leq p$ and $0 \leq j \leq p-2$,
where $M_{i,j}$ has radical length $i$ and radical quotient $M/\rad(M) \iso S_j$.
It thus suffices to show that each module $M_{i,j}$ is a suspension of some $S \down_L$.
For convenience, in the following we will interpret the subscript $j$ modulo $p-1$.

There are $p$ simple $kG$-modules~\cite[p.~14]{A}, and
we write $V_1, \dots, V_p$ for their restrictions to $L$.
The $kL$-module $V_i$ is uniserial of radical length $i$,
with radical quotient $V_i / \rad(V_i) \iso S_{i-1}$~\cite[p.~76]{A},
so $V_i = M_{i,i-1}$.
Note that the module $V_1$ is trivial and the module $V_p$ is projective.
The case $p = 2$ follows immediately, since $L = C_2$, and
$M_{1,0} = V_1 \iso k$ and $M_{2,0} = M_{2,1} = V_2 \iso kC_2$ are the only two
indecomposable $kL$-modules.
Thus we assume that $p$ is odd.

Recall that $W \iso \widetilde{\Omega}^2(k)$, hence $\blank \otimes W$ is isomorphic to
the functor $\Omega^2(\blank)$ on $\stmod{kL}$.
Since \mbox{$\blank \otimes W$} preserves radical lengths
and shifts the simple module $S_j$ to $S_{j-2}$,
we have a stable isomorphism $\Omega^{2k} V_i \iso M_{i,i-1-2k}$ for $k \in \Z$.
This gives all modules $M_{i,j}$ where $i+j$ is odd.

To get the modules $M_{i,j}$ with $i+j$ even and $1 \leq i < p$, note that
$V_p \otimes S_{p-i-1}$ is the projective cover of $V_{p-i}$. 
It follows that $\widetilde{\Omega} V_{p-i}$ has radical length $i$ and
radical quotient $S_{i-2}$, i.e.,
$\widetilde{\Omega} V_{p-i} \iso M_{i,i-2}$.
Then we can apply $\Omega^{2k}$ again to obtain the modules $M_{i,j}$ where $i+j$ is even.
\end{proof}

In general, for which groups the simple generating hypothesis holds remains open.

\section{Strong ghosts}\label{se:strong-ghosts}

In Section~\ref{ss:sgh class}, we motivate and define strong ghosts and show that
the strong ghost number of a group algebra $kG$ equals the strong ghost number of $kP$, 
where $P$ is a Sylow $p$-subgroup of $G$.
In Section~\ref{ss:sgn-C_p^r}, we compute the strong ghost number of
any cyclic $p$-group over a field of characteristic $p$.
In Section~\ref{ss:sgn-D_4q}, we show that the strong ghost number of
any dihedral $2$-group over a field of characteristic $2$
is between $2$ and $3$.

\subsection{The strong ghost projective class}\label{ss:sgh class}

If $H$ is a subgroup of a finite group $G$, then it is rare for
the restriction functor from $G$ to $H$ to preserve ghosts.
For example, we saw in Section~\ref{ss:A_4} that restriction
from the group $A_4$ to its Sylow $p$-subgroup $P$
does not preserve ghosts.
As another example, if $G$ is a $p$-group and $N \leq G$ is any normal subgroup,
then the restriction from $G$ to $N$ does not preserve ghosts,
since $k\up_N^G$ is indecomposable~\cite[Theorem~8.8]{A} and
is not a suspension of $k$.
Strong ghosts, which were introduced in~\cite{sgh}, will by definition
restrict to ghosts.

\begin{de}
Let $G$ be a finite group, and let $k$ be a field whose characteristic divides the order of $G$.
A map in $\StMod{kG}$ is called a \dfn{strong ghost} if 
its restriction to $\StMod{kH}$ is a ghost for every subgroup $H$ of $G$.
\end{de}

It follows immediately that the restriction of a strong ghost to any subgroup is again a strong ghost.

In~\cite{sgh}, Carlson, Chebolu and Min\'a\v{c} study strong ghosts in $\thick k$,
but their results imply the following theorem, 
which says that most groups admit strong ghosts in $\stmod{kG}$:

\begin{thm}[Carlson, Chebolu and Min\'a\v{c} \cite{sgh}]\label{th:stgh}
Let $G$ be a finite group, and let $k$ be a field whose characteristic divides the order of $G$. 
Then every strong ghost in $\stmod{kG}$ is stably trivial if and only if 
the Sylow $p$-subgroup of $G$ is $C_2$, $C_3$, or $C_4$.
\end{thm}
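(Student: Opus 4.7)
The plan is to reduce to the case of a $p$-group and then handle the Sylow types $C_2, C_3, C_4$ directly while exhibiting a non-trivial strong ghost for every remaining $p$-group. First I will appeal to the result stated at the start of Section~\ref{ss:sgh class}, proved just before this theorem, identifying the strong ghost number of $kG$ with that of its Sylow $p$-subgroup $P$. Granting this, it suffices to assume $G = P$ is a $p$-group, in which case $\stmod{kP} = \thick{k}$ and we are asking whether the stable projective class generated by $\{k\up_H^P : H \leq P\}$ exhausts the whole stable module category.

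For the direction that $P \in \{C_2, C_3, C_4\}$ implies every strong ghost is stably trivial: if $P$ is $C_2$ or $C_3$ the generating hypothesis holds for $kP$ by the known classification, so every ghost---hence every strong ghost---is stably trivial. For $P = C_4$ a direct coset calculation gives $k\up_{C_2}^{C_4} \iso M_2$, where $M_i$ denotes the $i$-dimensional indecomposable $kC_4$-module. Using $\Omega M_i \iso M_{4-i}$ in $\stmod{kC_4}$, the $\Omega$-orbits of the test objects $\{M_1, M_2\}$ are $\{M_1, M_3\}$ and $\{M_2\}$, which together exhaust the non-projective indecomposables $\{M_1, M_2, M_3\}$. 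A strong ghost $f\colon X \to Y$ is then stably orthogonal to every indecomposable summand of $X$, forcing $f$ to be stably trivial.

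For the converse I must exhibit a non-trivial strong ghost whenever $P$ is a $p$-group other than $C_2, C_3, C_4$. The cleanest sub-case is $P = C_p$ with $p \geq 5$: the only proper subgroup is trivial, so the corresponding induced module is projective and strong ghosts reduce to ordinary ghosts; since the generating hypothesis fails for $kC_p$ at $p \geq 5$, non-trivial strong ghosts exist. For cyclic $C_{p^r}$ with $p^r \notin \{2, 3, 4\}$, a coset computation analogous to the $C_4$ case gives $k\up_{C_{p^s}}^{C_{p^r}} \iso M_{p^{r-s}}$, so the $\Omega$-orbits of the test objects realize only indecomposables $M_i$ with $i \in \{p^s, p^r - p^s\}$, a proper subset of $\{1, \ldots, p^r-1\}$ once $p^r \geq 8$ or $p^r = 9$. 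Picking any $M_i$ outside this list---for instance $M_3$ in $\stmod{kC_8}$---and taking the connecting map in an appropriate Auslander-Reiten triangle yields a candidate strong ghost whose restriction to each proper subgroup one verifies to be a ghost.

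The main obstacle will be the converse for non-cyclic $p$-groups, especially $V_4 = C_2 \times C_2$ and the non-abelian $2$-groups (together with their $p = 3$ analogues). For these the subgroup lattice is richer and the decomposition of the induced modules $k\up_H^P$ is not as transparent, so producing a strong ghost requires either an Auslander-Reiten argument tailored to the representation type or the iterated composition construction of Carlson, Chebolu and Min\'a\v{c}. The delicate point is the simultaneous verification that the constructed map restricts to a ghost on \emph{every} subgroup $H \leq P$ and remains stably non-zero---a combinatorial bookkeeping task that is much harder than either condition in isolation.
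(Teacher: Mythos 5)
First, a point of order: the paper does not prove this theorem at all --- it is quoted from Carlson, Chebolu and Min\'a\v{c} \cite{sgh} --- so there is no internal proof to compare yours against, and your proposal must stand on its own. The parts you do carry out are essentially sound. The reduction to the Sylow $p$-subgroup via Proposition~\ref{pr:stgl} is legitimate (that proposition appears after the theorem in the text but is logically independent of it). For $C_2$ and $C_3$ the generating hypothesis disposes of everything; for $C_4$ the test objects $M_1$ and $M_2$ together with $\Omega M_i \iso M_{4-i}$ sweep up all non-projective indecomposables, so every object lies in $\stF$ and every strong ghost is killed by the identity of its source. For cyclic groups of order at least $5$ your counting shows some indecomposable $M_i$ lies outside $\stF$; at that point you do not need the Auslander--Reiten triangle you gesture at (whose connecting map you would still have to verify is a strong ghost) --- since $(\stF,\stG)$ is a projective class, $M_i\notin\stF$ already guarantees a non-trivial strong ghost out of $M_i$, namely the cofibre map of its $\stF$-approximation.

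The genuine gap is the one you name yourself and then leave open: the converse for non-cyclic $p$-groups. This is not a peripheral case but the bulk of the theorem. By standard structure theory a non-cyclic $p$-group either is generalized quaternion or contains a subgroup isomorphic to $C_p\times C_p$, so by the monotonicity in Proposition~\ref{pr:stgl} it would suffice to exhibit a non-trivial strong ghost for $C_p\times C_p$ (every $p$) and for each generalized quaternion group; neither is done, and neither is routine, since for $C_p\times C_p$ one must show that some finitely generated indecomposable avoids the closure of the finitely many induced modules $k\up_H^G$ under syzygies, sums and retracts --- which requires engaging with the (tame or wild) classification of indecomposables --- and the quaternion case needs its own argument. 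As written, the proposal proves the theorem only for groups with cyclic Sylow $p$-subgroup.
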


Note that in passing from ghosts to strong ghosts,
we only get one more $p$-group, namely $C_4$,
where all strong ghosts are stably trivial. 

We next observe that strong ghosts form an ideal of a projective class
and use this in further study of strong ghosts.

Let $H$ be a subgroup of $G$. We know that the restriction functor 
\[
 \down_H \colon \StMod{kG} \lra \StMod{kH}
\]
is both left and right adjoint to the induction functor
\[
 \up^G: \StMod{kH} \lra \StMod{kG}.
\]
The pullback (see Definition~\ref{de:pullback}) of the ghost projective class along the restriction functor consists of
maps in $\StMod{kG}$ which restrict to ghosts in $\StMod{kH}$.
The intersection of such ideals when $H$ ranges over all subgroups of $G$
consists of exactly the strong ghosts and
again forms an ideal of a projective class:
the relative projectives are obtained from modules of the form $k \up_H^G$
by closing under suspensions, desuspensions, direct sums and retracts.
This is the \dfn{strong ghost projective class} in $\StMod{kG}$ and
is denoted by $(\stF, \stG)$.
(In the terminology of~\cite{Chr}, it is the \emph{meet} of the pullbacks.)

Note that we can set $\bP = \{ k\up_H^G \st H \text{ is a subgroup of } G\}$ in $\StMod{kG}$, and
this generates exactly the strong ghost projective class.
Since every $kG$-module $M$ is a summand of $M \down_P \up_P^G$,
where $P$ is a Sylow $p$-subgroup of $G$,
and induction is a triangulated functor,
we have that $\thickC G \bP = \stmod{kG}$.
Hence, using the terminology in Section~\ref{ss:length-general},
Theorem~\ref{th:stgh} is the statement that
the generating hypothesis with respect to $\bP$ holds in $\StMod{kG}$ if and only if
the Sylow $p$-subgroup of $G$ is $C_2$, $C_3$, or $C_4$.

For $M \in \stmod{kG}$, we define the \dfn{strong ghost length} of $M$,
denoted by $\stgl (M)$,
to be the Freyd length of $M$ with respect to $\bP$, i.e.,
$\stgl(M) = \Flen_{\bP} (M)$.
The \dfn{strong ghost number} of $kG$ is defined to be the Freyd number of
$\StMod{kG}$ with respect to $\bP$.

One can show that
strong ghosts induce up to strong ghosts by proving the dual statement,
i.e., that relative projectives restrict to relative projectives.
This follows from Mackey's Theorem (Theorem~\ref{th:mackey}) and
the observation that $s(\Omega_H^n k) \iso \Omega_{sHs^{-1}}^nk$~\cite{sgh}.
Since the induction functor is always faithful, one obtains
the following result:

\begin{pro}[Carlson, Chebolu and Min\'a\v{c}~\cite{sgh}]\label{pr:ccm-sgh}
Let $G$ be a finite group, and let $k$ be a field whose characteristic
divides the order of $G$. Let $H$ be a subgroup of $G$.
If $g$ is a stably non-trivial strong ghost in $\StMod{kH}$,
then $g \up^G$ is a stably non-trivial strong ghost in $\StMod{kG}$.\qed
\end{pro}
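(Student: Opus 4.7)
I will split the claim into two parts: (a) induction sends strong ghosts to strong ghosts, and (b) induction is faithful. Granting (a) and (b), the proposition follows: if $g$ is a stably non-trivial strong ghost in $\StMod{kH}$, then $g\up^G$ is a strong ghost by (a), and is stably non-trivial by (b).

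For (a), following the hint, I would establish the dual statement that $\down_H(\stF^G)\subseteq\stF^H$ and then invoke the adjunction. Since $\Omega$ commutes with restriction in $\StMod$ and since both restriction and induction preserve sums and retracts, it suffices to show that $(k\up_K^G)\down_H\in\stF^H$ for each subgroup $K\le G$. By Mackey's Theorem (Theorem~\ref{th:mackey}),
\[
(k\up_K^G)\down_H \iso \bigoplus_{s\in H\backslash G/K} (sk)\down_{H\cap sKs^{-1}}\up^H
 \iso \bigoplus_{s\in H\backslash G/K} k\up_{H\cap sKs^{-1}}^H,
\]
using that $sk\iso k$ as the trivial $k(sKs^{-1})$-module (the base case of the compatibility $s(\Omega_H^n k)\iso\Omega_{sHs^{-1}}^nk$ cited from~\cite{sgh}). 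Each summand is a relative projective for $H$, so $\down_H(\stF^G)\subseteq\stF^H$. Using the right-adjoint half of the Frobenius adjunction $\up^G\dashv\down_H$, for any $P\in\stF^G$, any $i\in\Z$, and any strong ghost $f$ of $H$,
\[
[\Sigma^i P,\, f\up^G]_G \iso [\Sigma^i(P\down_H),\, f]_H = 0,
\]
so $f\up^G\in\stG_G$, proving (a).

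For (b), I would extract from the Mackey decomposition of $M\up^G\down_H$ the direct summand $M\down_H\up^H\iso M$ corresponding to the double coset with representative $s=1$. One checks that the unit $\eta_M\colon M\to M\up^G\down_H$ is the inclusion of this summand, hence is split monic. If $g\up^G$ were stably trivial, then so would be $(g\up^G)\down_H$, and therefore so would its retract $g$, contradicting the hypothesis. The main obstacle, modest in this setting, is the Mackey bookkeeping and the verification that conjugation interacts correctly with $\Omega$ and with restriction; the observation $s(\Omega_H^n k)\iso\Omega_{sHs^{-1}}^nk$ is precisely what makes this transparent.
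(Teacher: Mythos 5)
Your proposal is correct and follows essentially the same route the paper sketches in the paragraph preceding the proposition: prove the dual statement that relative projectives restrict to relative projectives via Mackey's Theorem (with the conjugation compatibility $s(\Omega_H^n k)\iso\Omega_{sHs^{-1}}^n k$), transfer this across the Frobenius adjunction to conclude that induction preserves strong ghosts, and then use faithfulness of induction (the split monic unit coming from the $s=1$ summand in the Mackey decomposition) to preserve non-triviality. No gaps; only minor notational slips (e.g.\ writing $\up^G\dashv\down_H$ while invoking $\up^G$ as the \emph{right} adjoint, which is harmless since induction and coinduction agree here).
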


Next, we prove that the induction functor preserves strong ghost lengths.

\begin{pro}\label{pr:stgl}
Let $G$ be a finite group, and let $k$ be a field whose characteristic
divides the order of $G$.
Let $H$ be a subgroup of $G$.
Then for any $M$ in $\stmod{kH}$, $\stgl(M \up^G) = \stgl(M)$,
and so the strong ghost number of $kG$ is at least as big as the
strong ghost number of $kH$.
Moreover, if $P$ is a Sylow $p$-subgroup of $G$, then
\[
\text{strong ghost number of } kP = \text{strong ghost number of } kG.
\]
\end{pro}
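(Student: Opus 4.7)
The plan is to prove the equality $\stgl(M\up^G) = \stgl(M)$ first, from which the inequality that the strong ghost number of $kG$ is at least that of $kH$ follows immediately by taking suprema over $M \in \stmod{kH}$; the Sylow case then requires an additional argument based on Lemma~\ref{le:background}(ii).

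For the forward direction $\stgl(M) \leq \stgl(M\up^G)$, I would take any $n$-fold composite $\phi = \phi_n \circ \cdots \circ \phi_1$ of strong ghosts in $\stmod{kH}$ out of $M$, with $n \geq \stgl(M\up^G)$, and induce up to obtain $\phi\up^G = (\phi_n)\up^G \circ \cdots \circ (\phi_1)\up^G$. Induction preserves strong ghosts (this is already implicit in the proof of Proposition~\ref{pr:ccm-sgh} via Mackey), so $\phi\up^G$ is an $n$-fold composite of strong ghosts in $\stmod{kG}$ out of $M\up^G$; by definition of $n$ it vanishes, and the faithfulness of induction forces $\phi = 0$.

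The reverse inequality $\stgl(M\up^G) \leq \stgl(M)$ is the substantive point. Given an $m$-fold composite $\phi = \phi_m \circ \cdots \circ \phi_1 \colon M\up^G \to N$ of strong ghosts in $\stmod{kG}$ with $m \geq \stgl(M)$, I would pass to its transpose $\tilde\phi = \phi\down_H \circ \eta_M \colon M \to N\down_H$ under the adjunction $\up^G \dashv \down_H$, where $\eta_M$ denotes the unit. Restriction preserves strong ghosts (any subgroup of $H$ is a subgroup of $G$), so each $\phi_i\down_H$ is a strong ghost in $\stmod{kH}$; moreover, since strong ghosts form an ideal, $\phi_1\down_H \circ \eta_M$ is also a strong ghost. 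Consequently $\tilde\phi = (\phi_m\down_H) \circ \cdots \circ (\phi_2\down_H) \circ (\phi_1\down_H \circ \eta_M)$ is an $m$-fold composite of strong ghosts out of $M$, so it vanishes by the choice of $m$, and the adjunction bijection forces $\phi = 0$.

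For the Sylow statement the remaining inequality is that the strong ghost number of $kG$ is at most that of $kP$. By Lemma~\ref{le:background}(ii), every $M \in \stmod{kG}$ is a retract of $M\down_P\up^G$, and strong ghost length is monotone under retracts (once again using that strong ghosts form an ideal: one absorbs the splitting $p \colon N \to M$ into the first factor of any composite out of $M$). Applying the first part with $H = P$ then gives $\stgl(M) \leq \stgl(M\down_P\up^G) = \stgl(M\down_P)$, which is bounded by the strong ghost number of $kP$. The principal obstacle is the adjunction argument for $\stgl(M\up^G) \leq \stgl(M)$: one must verify that the unit $\eta_M$ is well-defined in the stable module category and that absorbing it into the first strong-ghost factor really yields an $m$-fold composite of strong ghosts rather than requiring an extra step, but both are immediate from the ideal property.
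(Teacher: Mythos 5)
Your proof is correct and follows essentially the same strategy as the paper: Proposition~\ref{pr:ccm-sgh} (induction preserves strong ghosts and is faithful) for the inequality $\stgl(M) \leq \stgl(M\up^G)$, and the adjunction isomorphism together with the ideal property for the reverse inequality. The only minor deviation is in the Sylow step: the paper appeals directly to faithfulness of restriction (Lemma~\ref{le:background}(i)) — any composite of $n$ strong ghosts out of $M$ restricts to a composite of $n$ strong ghosts out of $M\down_P$, which vanishes when $n$ is at least the strong ghost number of $kP$, forcing the original to vanish — whereas you use the retract property (Lemma~\ref{le:background}(ii)) and the already-established equality $\stgl(M\down_P\up^G) = \stgl(M\down_P)$. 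Both routes are valid and of comparable length; the paper's is marginally more direct since it does not need to invoke monotonicity of strong ghost length under retracts as a separate step.
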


\begin{proof}
The proof is essentially the same as the proof of Theorem~\ref{th:normal}.
By Proposition~\ref{pr:ccm-sgh}, we have $\stgl(M \up^G) \geq \stgl(M)$.
Conversely, since the natural isomorphism $\alpha: \sHom_G(M \up^G, L) \to \sHom_H(M, L \down_H)$
preserves strong ghosts, $\stgl(M \up^G) \leq \stgl(M)$.

When $P$ is a Sylow $p$-subgroup of $G$, the restriction functor is faithful by
Lemma~\ref{le:background}(i), and
the last equality follows easily.
\end{proof}

\subsection{Strong ghost numbers of cyclic $p$-groups}\label{ss:sgn-C_p^r}

We study the strong ghost numbers of cyclic $p$-groups in this section.
Our result suggests that the notion of a strong ghost is much stronger than that of a ghost.

We first review ghost lengths in $\stmod{kC_{p^r}}$,
following~\cite[Section~5.1]{Gh in rep}.

\begin{lem}\label{le:C_p^r}
Let $G = C_{p^r}$ be a cyclic group of order $p^r$ with generator $g$,
let $k$ be a field of characteristic $p$, and
let $M_n$ be the indecomposable $kC_{p^r}$-module of radical length $n$.
Then the self map $g-1$ on $M_n$ is a weakly universal ghost, 
i.e., any ghost with domain $M_n$ factors through $g-1$.
Moreover $\gl (M_n) = \min (n, p^r-n)$ and the ghost number of
$kG$ is $\lfloor p^r/2 \rfloor$.
\end{lem}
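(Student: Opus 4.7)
My plan is to work inside the truncated polynomial ring $kC_{p^r} \iso k[x]/(x^{p^r})$ with $x = g - 1$, so that the indecomposable modules are $M_n = k[x]/(x^n)$ for $1 \leq n \leq p^r$ and $M_{p^r}$ is the unique indecomposable projective. Computing the projective cover $M_{p^r} \twoheadrightarrow M_n$ yields $\Omega M_n \iso M_{p^r - n}$, giving the $2$-periodicity of $\stmod{kC_{p^r}}$ and the identity $\gl(M_n) = \gl(M_{p^r - n})$ which is the source of the $\min$ in the formula. I would also record the stable hom dimension formula $\dim \sHom(M_m, M_n) = \min(m, n) - \max(0, m + n - p^r)$, obtained by a direct inspection of $\Hom(M_m, M_n)$ and $\PHom(M_m, M_n)$ in terms of images of generators.

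The next step is to show $g - 1 : M_n \to M_n$ is a ghost. By $2$-periodicity it suffices to verify that composing with $g - 1$ kills any stable map out of $k$ or out of $\Omega k = M_{p^r - 1}$. Any map $k \to M_n$ has image in $\soc M_n = \ker(g - 1)$, so the first case is immediate. For $\phi : M_{p^r - 1} \to M_n$ with $y = \phi(v_0)$, I lift $y$ to $\tilde y \in M_{p^r}$ and factor $(g - 1) \circ \phi$ through $M_{p^r}$ via the map sending $v_0 \mapsto x \tilde y$, which is well-defined since $x^{p^r - 1}(x \tilde y) = 0$. The weakly universal property then follows by a similar projective-correction argument: any ghost $f : M_n \to N$ vanishes stably on $\soc M_n$, so after modifying $f$ within its stable class we may assume $f|_{\soc M_n} = 0$, which yields a factorization $f = h \circ (g - 1)$ because $\ker(g - 1) = \soc M_n$.

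For the ghost length formula, the upper bound $\gl(M_n) \leq n$ uses the short exact sequence $0 \to \soc M_n \to M_n \to M_{n - 1} \to 0$ and induction on $n$: this produces a triangle placing $M_n$ in $\cF_n$, giving $\gl(M_n) \leq \len_{\{k\}}(M_n) \leq n$. For the lower bound, the composite $(g - 1)^{n - 1} : M_n \to M_n$ is a chain of $n - 1$ ghosts sending $v_0$ to $x^{n - 1}$. Since $\PHom(M_n, M_n)$ consists of maps sending $v_0$ into $x^{p^r - n} M_n$, this composite is stably nontrivial exactly when $n - 1 < p^r - n$, i.e., when $n \leq p^r / 2$. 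Combining with the periodicity $\gl(M_n) = \gl(M_{p^r - n})$ yields $\gl(M_n) = \min(n, p^r - n)$, and the ghost number is the maximum of this quantity over $1 \leq n \leq p^r - 1$, namely $\lfloor p^r / 2 \rfloor$.

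The delicate steps are the two projective-lifting arguments: verifying that $g - 1$ is a ghost requires explicitly producing a factorization of $(g - 1) \circ \phi$ through $M_{p^r}$ when $\phi$ comes from $\Omega k$, and the weakly universal property requires upgrading the stable-category vanishing $\sHom(k, f) = 0$ to an actual vanishing of a representative of $f$ on $\soc M_n$, modulo a correcting map through a projective. Once these are in place, the ghost length and ghost number computations are routine bookkeeping with the dimension formula above.
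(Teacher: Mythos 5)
Most of your argument is sound, and it is genuinely more self-contained than the paper's proof, which simply cites \cite{GH for p} for the fact that $g-1$ is a ghost and \cite{Gh in rep} for the (non)vanishing of $(g-1)^l$; your dimension count for $\sHom(M_m,M_n)$, your verification that $g-1$ is a ghost via the explicit lift through $M_{p^r}$, and your upper/lower bound bookkeeping for $\gl(M_n)$ are all correct. The problem is the weak universality step. You claim that once a ghost $f\colon M_n\to N$ has been corrected so that $f|_{\soc M_n}=0$, a factorization $f=h\circ(g-1)$ follows ``because $\ker(g-1)=\soc M_n$.'' That implication is false: since $g-1\colon M_n\to M_n$ is not surjective, killing its kernel only factors $f$ through the coimage $M_n/\soc M_n\iso xM_n\iso M_{n-1}$, and you still must extend the resulting map $xM_n\to N$ over the inclusion $xM_n\hookrightarrow M_n$. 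Concretely, writing $y=f(v_0)$, the condition $f(\soc M_n)=0$ only gives $x^{n-1}y=0$, whereas $f=h\circ(g-1)$ requires $y=xz$ with $x^nz=0$, i.e.\ $y\in xN$. The projection $q\colon M_n\to M_{n-1}$ kills $\soc M_n$ but does not factor through $g-1$ (its image of $v_0$ is a generator of $M_{n-1}$, not in $xM_{n-1}$) --- and indeed $q$ is not a ghost, so your hypothesis has discarded exactly the information needed.

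The missing ingredient is the second half of the ghost condition. The fibre of $g-1$ is $k\oplus\Sigma k$ (this triangle is how the paper deduces weak universality in one line), and you have only used the $k$-summand, namely the socle inclusion. You must also use that $f\circ\rho$ is stably trivial, where $\rho\colon M_{p^r-1}\to M_n$, $v_0\mapsto v_0$, generates $\sHom(\Omega k,M_n)$: decomposing $N$ into indecomposables, this forces each component of $y$ to lie in $xM_{m_i}$ (since $\PHom(M_{p^r-1},M_{m_i})$ consists of maps landing in $xM_{m_i}$), which combined with $x^{n-1}y=0$ gives exactly $y\in x\cdot\ker(x^n|_N)$ and hence the desired $h$. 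With that repair (or by replacing the whole step with the triangle argument: any ghost out of $M_n$ kills the map $k\oplus\Sigma k\to M_n$ and so factors through its cofibre, which is $g-1$), your proof is complete.
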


\begin{proof}
That the map $g-1$ is a ghost is proved in~\cite[Lemma~2.2]{GH for p}.
It is weakly universal, since it fits into a triangle
\[ k \oplus \Sigma k \lra M_n \xrightarrow{g-1} M_n \lra k \oplus \Sigma k. \]
The $l$-fold composite $(g-1)^l$ on $M_n$ is stably trivial if and only if
$l \geq \min(n, p^r-n)$ (see~\cite[Propositions~5.2, 5.3]{Gh in rep}).
Hence $\gl (M_n) = \min (n, p^r-n)$.
Since all indecomposables are of this form, the ghost number of
$kG$ is $\lfloor p^r/2 \rfloor$.
\end{proof}

\begin{thm}\label{th:C_p^r}
Let $G = C_{p^r}$ be a cyclic group of order $p^r$,
let $k$ be a field of characteristic $p$, and
let $M_n$ be the indecomposable $kC_{p^r}$-module of radical length $n < p^r$.
Writing $N = \min(n, p^r - n) = \gl(M_n)$,
we have the following:
\begin{enumerate}[(i)]
\item\label{item:i}
If $N \leq p^{r-1}$, then
\[ \stgl( M_n)
= \begin{cases} 1 & \text{if } N \mid p^r,\\
                \displaystyle 2 & else.
  \end{cases}\]
\item\label{item:ii}
If $N > p^{r-1}$, then
\[ \stgl( M_n) = \left\lceil \frac{N}{p^{r-1}} \right\rceil = \left\lceil \frac{\gl(M_n)}{p^{r-1}} \right\rceil . \]
\end{enumerate}
It follows that 
\[
\text{strong ghost number of } kG = \begin{cases}
  \bigg\lceil \displaystyle \frac{p+1}{2} \bigg\rceil , & \text{when $p=2$ and $r \geq 3$, or $p$ is odd and $r \geq 2$}\\[12pt]

  \bigg\lceil \displaystyle \frac{p-1}{2} \bigg\rceil , & \text{otherwise}
\end{cases}
\]
\end{thm}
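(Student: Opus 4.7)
The plan is to compute $\stgl(M_n)$ by first identifying the class $\stF$ of relative projectives of the strong ghost projective class, and then bounding $\stgl(M_n) = \Flen_{\bP}(M_n)$ from above and below. The first step is to observe that the subgroups of $C_{p^r}$ form a chain $C_{p^0} \subseteq \cdots \subseteq C_{p^r}$, that $k \up_{C_{p^s}}^{C_{p^r}} \iso M_{p^{r-s}}$, and that $\Omega M_{p^s} \iso M_{p^r - p^s}$ stably. Thus $\stF$ consists of direct sums of summands of $\{M_{p^s}, M_{p^r - p^s} : 0 \leq s \leq r\}$, and an indecomposable $M_n$ lies in $\stF$ exactly when $N = \min(n, p^r - n)$ is a power of $p$. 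This immediately gives $\stgl(M_n) = 1$ when $N$ is a power of $p$, and $\stgl(M_n) \geq 2$ otherwise.

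For the general lower bound $\stgl(M_n) \geq \lceil N/p^{r-1} \rceil$, I would verify that multiplication by $x^{p^{r-1}}$ on $M_n$ (writing $x = g-1$) is a strong ghost. For each subgroup $C_{p^s}$ with $s \geq 1$, the weakly universal ghost on any indecomposable $kC_{p^s}$-module is multiplication by $y = x^{p^{r-s}}$ by Lemma~\ref{le:C_p^r}; and since $p^{r-1} = p^{r-s} \cdot p^{s-1}$, on restriction $x^{p^{r-1}} = y^{p^{s-1}}$ factors through $y$ and hence is a ghost on $C_{p^s}$. The $m$-fold iterate $x^{mp^{r-1}}$ on $M_n$ is stably non-trivial whenever $mp^{r-1} < N$, again by Lemma~\ref{le:C_p^r}, so taking $m = \lceil N/p^{r-1} \rceil - 1$ produces an $m$-fold composite of non-trivial strong ghosts out of $M_n$, yielding the claimed lower bound.

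For the upper bound in case~(ii), where $N > p^{r-1}$, I would use the $\Omega$-duality $\stgl(M_n) = \stgl(M_{p^r - n})$ (since $\Omega$ permutes $\stF$) to reduce to $n \leq p^r/2$, so $N = n$, and then iterate the short exact sequence $0 \to M_{p^{r-1}} \to M_n \to M_{n - p^{r-1}} \to 0$ whose subobject $M_{p^{r-1}}$ lies in $\stF$. Lemma~\ref{le:Flengths-in-triangle} then gives $\stgl(M_n) \leq 1 + \stgl(M_{n - p^{r-1}})$; iterating $q - 1$ times, with $q = \lceil n/p^{r-1} \rceil$, until the residue $r' = n - (q-1)p^{r-1}$ satisfies $0 < r' \leq p^{r-1}$ reduces the problem to case~(i) and yields the matching upper bound of $q$ when $r' = p^{r-1}$ and $q+1$ when $r' < p^{r-1}$. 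Finally, I would compute the strong ghost number of $kG$ by maximising $\stgl(M_n)$ over all $n < p^r$, confirming the two-case formula at the end of the statement.

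The main obstacle is the upper bound in case~(i): showing that $\stgl(M_n) \leq 2$ for every $n \leq p^{r-1}$ for which $N$ is not a power of $p$. The naive short exact sequence $0 \to M_{p^s} \to M_n \to M_{n - p^s} \to 0$ does not in general give a triangle whose outer terms both lie in $\stF$, and realising $M_n$ as the middle term of a triangle with both ends in $\stF$ requires a more delicate analysis, potentially using rotations of triangles coming from embeddings $M_n \hookrightarrow M_{p^r - p^s}$ or carefully chosen direct sums of $\stF$-pieces. Handling this case cleanly, and then feeding its conclusion into the recursion in case~(ii), is where the real work of the proof lies.
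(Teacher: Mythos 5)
Your identification of $\stF$ and your lower bound via the strong ghost $(g-1)^{p^{r-1}}$ both match the paper and are correct, but there are two genuine gaps in the upper bounds, and both are closed in the paper by a single observation that you never use: for $j < m$ the self-map $(g-1)^j$ of $M_m$ sits in a triangle
\[ M_j \oplus \Sigma M_j \lra M_m \xrightarrow{(g-1)^j} M_m \lra M_j \oplus \Sigma M_j . \]
For the upper bound in case (i) — the step you flag as the main obstacle and leave open — take $m = p^{r-1}$ and $j = N$: both copies of $M_{p^{r-1}}$ lie in $\stF$, so the fibre $M_N \oplus \Sigma M_N \iso M_n \oplus \Sigma M_n$ lies in $\stF_2$, whence $\stgl(M_n) \le \len_{\stF}(M_n) \le 2$. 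No analysis of embeddings into $M_{p^r - p^s}$ is needed.

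The second gap is that your upper bound in case (ii) does not reach the claimed value. Peeling off $M_{p^{r-1}}$ repeatedly and applying Lemma~\ref{le:Flengths-in-triangle} gives $\stgl(M_n) \le (q-1) + \stgl(M_{r'})$, which is $q+1$ whenever the residue $r'$ does not divide $p^r$; e.g.\ for $p=5$, $r=2$, $n=9$ your recursion yields $\stgl(M_9) \le 1 + \stgl(M_4) = 3$, while the theorem asserts $\stgl(M_9) = \lceil 9/5 \rceil = 2$, so the argument provably cannot be tightened by bookkeeping alone. The paper instead takes $m = n$ and $j = p^{r-1}$ in the displayed triangle: the fibre $M_{p^{r-1}} \oplus \Sigma M_{p^{r-1}}$ lies in $\stF$, so $(g-1)^{p^{r-1}}$ is a \emph{weakly universal} strong ghost out of $M_n$, meaning every composite of $j$ strong ghosts out of $M_n$ factors through $(g-1)^{jp^{r-1}}$; by Lemma~\ref{le:C_p^r} this power is stably trivial exactly when $jp^{r-1} \ge N$, giving the upper bound $\lceil N/p^{r-1} \rceil$ on the nose and matching your lower bound. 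You should replace your filtration argument with this universality argument (or an equivalent device) before the final maximisation over $n$, which is otherwise routine.
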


\begin{proof}
We divide the proof into three cases:
\smallskip

\noindent
\textbf{Case 1:} We first determine the indecomposable modules in $\stF$,
i.e., those of strong ghost length $1$.
The set $\stF$ is generated by $\bP = \{ k\up_{C_{p^j}}^{C_{p^r}} = M_{p^{r-j}} \st 1 \leq j \leq r \}$, and
so an indecomposable module $M_n$ is in $\bP$ if and only if $n \mid p^r$.
Since $\stF$ also contains the suspensions of modules in $\bP$
and $\Sigma M_n \iso M_{p^r - n}$, it follows that
$\stgl(M_n) = 1$ if and only if $n \mid p^r$ or $(p^r - n) \mid p^r$,
i.e., $N \mid  p^r$.

This implies that $\bP \subseteq \cF_{p^{r-1}}$, or equivalently,
that $\cG^{p^{r-1}} \subseteq \stG$, which will be useful below.
\smallskip

\noindent
\textbf{Case 2:}
For $N < p^{r-1}$,
we show that $M_n$ is contained in $\stF_2$.
Indeed, for such $n$ we have a triangle
\[ M_n \oplus \Sigma M_n \lra M_{p^{r-1}} \xrightarrow{(g-1)^N} M_{p^{r-1}} \lra M_n \oplus \Sigma M_n , \]
where $g$ is a generator of $C_{p^r}$.
Hence $M_n \in \stF_2$ and $\stgl(M_n) \leq 2$, completing the proof
of~\eqref{item:i}.
\smallskip

\noindent
\textbf{Case 3:}
We compute the strong ghost length of $M_n$ for $N > p^{r-1}$.
By the previous observation, the self map $(g-1)^{p^{r-1}}$ on $M_n$ is a strong ghost.
This map fits into the triangle
\[ M_{p^{r-1}} \oplus \Sigma M_{p^{r-1}} \lra M_n \xrightarrow{(g-1)^{p^{r-1}}} M_n \lra M_{p^{r-1}} \oplus \Sigma M_{p^{r-1}}, \]
with fibre in $\stF$, so
it is a weakly universal strong ghost.
By Lemma~\ref{le:C_p^r}, its $j$th power is stably trivial
if and only if $j p^{r-1} \geq N = \gl(M_n)$.
The equality in~\eqref{item:ii} then follows.

\smallskip

The calculation of the strong ghost number follows from these results:

When $p=2$, the ghost number of $kC_{2^r}$ is $2^{r-1}$, hence
all $C_{2^r}$-modules are dealt with in~\eqref{item:i}, and
the strong ghost number of $kC_{2^r}$ is $2$ provided $r \geq 3$,
and $1$ otherwise.

When $p$ is odd, the modules in~\eqref{item:ii} dominate.  The strong ghost length
is maximized when $N = (p^r-1)/2$ (the ghost number of $kC_{p^r}$) and is
\[
  \left\lceil \frac{p^r-1}{2 p^{r-1}} \right\rceil = \left\lceil \frac{p-\frac{1}{p^{r-1}}}{2} \right\rceil ,
\]
which simplifies to the desired expressions.
\end{proof}

\subsection{Strong ghost numbers of dihedral 2-groups}\label{ss:sgn-D_4q}

In this section we find an upper bound for the strong ghost number of
a dihedral $2$-group, using the result from the previous section on
the strong ghost numbers of cyclic $p$-groups.

Let $k$ be a field of characteristic $2$.
We write $D_{4q}$ for the dihedral $2$-group of order $4q$, with $q$ a power of $2$:
\[ D_{4q} = \langle x,y \st x^2=y^2=1,\, (xy)^q=(yx)^q \rangle . \]
It has a normal cyclic subgroup $C_{2q}$, generated by $g = xy$. 
We prove the following theorem on the strong ghost number of $kD_{4q}$:

\begin{thm}\label{th:D_4q}
Let $k$ be a field of characteristic $2$, and
let $D_{4q}$ be the dihedral $2$-group of order $4q$,
with $q = 2^r$ and $r \geq 1$. Then
\[ 2 \leq \text{the strong ghost number of } kD_{4q} \leq 3. \]
\end{thm}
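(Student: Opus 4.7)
The plan splits into the easy lower bound and the harder upper bound.

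For the lower bound, I would invoke Theorem~\ref{th:stgh}: since $D_{4q}$ has order $4q \geq 8$ and is therefore not isomorphic to $C_2$, $C_3$ or $C_4$, there exists a stably non-trivial strong ghost in $\stmod{kD_{4q}}$, so the strong ghost number is at least $2$.

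The upper bound of $3$ is the substantive claim, and the plan is to show that every $M \in \stmod{kD_{4q}}$ lies in $\stF_3$; equivalently, every $3$-fold composite of strong ghosts out of $M$ is stably trivial. The first step would be to exploit the normal cyclic subgroup $C_{2q} = \langle xy \rangle$ of index $2$: by Theorem~\ref{th:C_p^r} applied to $C_{2q} = C_{2^{r+1}}$ with $r \geq 1$, the strong ghost number of $kC_{2q}$ is at most $2$. Since restriction of a strong ghost is again a strong ghost, any $2$-fold composite of strong ghosts in $\stmod{kD_{4q}}$ becomes stably trivial after restriction to $\stmod{kC_{2q}}$.

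The pivotal step is then a factorization: I would argue that a strong ghost $f \colon M \to N$ whose restriction to $C_{2q}$ is stably trivial must factor through some object $P \in \stF$. Granted this, for any further strong ghost $g \colon N \to L$ the composite $gf$ would be zero, because by the definition of the projective class $(\stF, \stG)$ any map in $\stG$ vanishes on objects in $\stF$. Combined with the previous step this gives $g_3 g_2 g_1 = 0$ for any three strong ghosts out of $M$, yielding $\stgl(M) \leq 3$.

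The main obstacle, and the source of the ``non-trivial'' flavour flagged in the theorem statement, is establishing this factorization. My approach would be to build a triangle involving the relative projective $k \up_{C_{2q}}^{D_{4q}}$ together with induced modules $k \up_V^{D_{4q}}$ for $V$ a Klein four subgroup of $D_{4q}$, and then use Mackey's theorem together with the adjunction $\up_{C_{2q}}^{D_{4q}} \dashv \down_{C_{2q}}$ to argue that a map killed by restriction factors through the relative-projective term. Since $kD_{4q}$ is of tame representation type, the verification may descend to a check on the classified indecomposables (string and band modules), which is where I would expect the most delicate work.
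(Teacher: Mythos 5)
Your lower bound is fine and matches the paper: $D_{4q}$ is its own Sylow $2$-subgroup and is not $C_2$, $C_3$ or $C_4$, so Theorem~\ref{th:stgh} gives a stably non-trivial strong ghost. Your observation that any $2$-fold composite of strong ghosts restricts to zero on $C_{2q}$ is also correct, since restriction to $C_{2q}$ sends strong ghosts to strong ghosts and the strong ghost number of $kC_{2q}=kC_{2^{r+1}}$ is at most $2$ by Theorem~\ref{th:C_p^r}.

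The gap is your pivotal factorization claim: that a strong ghost $f\colon M\to N$ with $f\down_{C_{2q}}$ stably trivial must factor \emph{through an object of} $\stF$. This is not a consequence of the projective-class formalism, and it conflates the two dual roles in a projective class. A map whose restriction to $C_{2q}$ vanishes is in the ideal of the pullback projective class whose projectives are the relatively $C_{2q}$-projective modules; such a map factors through the \emph{cofibre} of the counit $M\down_{C_{2q}}\up^{D_{4q}}\to M$ (which, for this index-$2$ subgroup, is $\Sigma M$ --- not an object of $\stF$), whereas factoring through a relative projective is the dual, and much stronger, condition. Nothing in the hypothesis ``$f\in\stG$ and $f\down_{C_{2q}}=0$'' forces the latter. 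The claim is also suspiciously strong: for $q=2$ every strong ghost of $kD_8$ already restricts to zero on $C_4$ (since $C_4$ admits no non-trivial strong ghosts), so your claim would give $\stG^2=0$ and pin the strong ghost number of $kD_8$ at exactly $2$ --- a question the paper explicitly leaves open. Your sketch for proving the factorization (a triangle built from $k\up_{C_{2q}}$ and $k\up_V$, Mackey, adjunction, ``check on indecomposables'') is where all the content lies and is not carried out. The paper's actual route is different and concrete: it uses the structure theorem for indecomposable $kD_{4q}$-modules to reduce, via Lemma~\ref{le:Flengths-in-triangle}, to showing $\stgl(M((ab)^s))\leq 2$ and $\stgl(M((ab)^sa))\leq 2$, and then establishes these bounds by explicit octahedra built from the induced modules $M_m\up_{C_{2q}}^{D_{4q}}$. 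Some argument at that level of detail is needed; as written, your proof of the upper bound does not go through.
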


Recall that the strong generating hypothesis fails for $kD_{4q}$ by Theorem~\ref{th:stgh},
so the strong ghost number of $kD_{4q}$ is at least 2 for $r \geq 0$.
When $r = 0$, $D_{4q} \iso C_2 \times C_2$, and the strong ghost number of $k(C_2 \times C_2)$ is $2$.
Indeed, for a $p$-group, the strong ghost number is bounded above by the ghost number, and
by~\cite[Corollary~5.13]{Gh in rep}, the ghost number of $k(C_2 \times C_2)$ is also $2$.

Our goal will be to prove the upper bound.
We will make use of the notation from~\cite{Benson} (see also~\cite[Section~4.6]{Gh num}),
where the indecomposable $kD_{4q}$-modules are written using words in the letters $a$ and $b$.
By the proof of~\cite[Theorem~4.24]{Gh num}, every non-projective indecomposable
$kD_{4q}$-module $M$ sits in a triangle
$\Omega W \to M \to M'' \to W$, where $M''$ is a sum of modules of the form $M((ab)^s)$ and $M((ab)^s a)$,
for $0 \leq s < q$, and the same, with $a$ and $b$ exchanged,
and $W$ is a sum of suspensions and desuspensions of the trivial module.
Thus, by Lemma~\ref{le:Flengths-in-triangle},
it will suffice to show that the modules $M((ab)^s)$ and $M((ab)^s a)$ have
strong ghost length at most $2$.

\begin{proof}[Proof of Theorem]

By the discussion above, it suffices to show that
\[ \stgl (M((ab)^s)) \leq 2 \quad\text{and}\quad \stgl (M((ab)^s a)) \leq 2 \]
for $0 \leq s < q$.

It will be convenient to make the following notational convention:
when we write $(ab)^\frac{m}{2}$, we mean $a b a \cdots$ with $m$ letters in total.
For example, $(ab)^\frac{5}{2} = a b a b a$.
In addition, $(ba)^{-\frac{m}{2}}$ denotes $((ba)^\frac{m}{2})^{-1}$, so
$(ba)^{-\frac{5}{2}} = b^{-1} a^{-1} b^{-1} a^{-1} b^{-1}$.
Let $M = M ( (ab)^{\frac{q}{2}} (ba)^{-\frac{q}{2}}, \id) \iso k \up_{C_2}^{D_{4q}} \iso M_q \up_{C_{2q}}^{D_{4q}}$,
which has strong ghost number $1$.
Similarly, for $0 \leq m \leq q-1$, we write
$M_m'$ for the module 
$M( (ab)^{\frac{m}{2}} (ba)^{-\frac{m}{2}}, \id)$,
which has $2m$ letters in total.
Then $M_m' \iso M_m \up_{C_{2q}}^{D_{4q}}$,
as one can check that $(1-xy)^m(yxyx \cdots) = XYXY \cdots - YXYX \cdots$ 
($m$ factors in each expression) by induction,
where $X = x-1$ and $Y = y-1$.
Thus, by Proposition~\ref{pr:stgl}, $M_m'$ has strong ghost length at most $2$, since $M_m$ does.
Inducing up the triangle
$M_q \xrightarrow{\!(1-g)^m\!\!} M_q \to M_m \oplus M_{2q-m} \to M_q$ in $\stmod{kC_{2q}}$,
we get the triangle
\begin{equation}\label{eq:(1-g)m}
M \xrightarrow{(1-xy)^m} M \xrightarrow{\ \alpha\ } M_m' \oplus M_{2q-m}' \xrightarrow{\ \beta\ } M.
\end{equation}
Let $j: M_m' \oplus M_{2q-m}' \to k$ be zero on $M_m'$ and non-zero on $M_{2q-m}'$.
Then the composite $j \alpha$ is stably trivial.
One can check this fact by looking at the adjoint of $j \alpha$.
Similarly, let $i: k \to M_m' \oplus M_{2q-m}'$ be zero on $M_m'$ and non-zero on $M_{2q-m}'$.
The composite $\beta i$ is stably trivial as well.

The kernel of the non-zero map $M_{2q-m}' \to k$ is 
$M ( (ab)^{q-\frac{m+1}{2}} (ba)^{\frac{m+1}{2}-q} )$,
which we denote $K_{2q-m}$.
Let $\psi: M_m' \oplus K_{2q-m} \to M_m' \oplus M_{2q-m}'$ be the fibre of $j$, and
let $\phi = \beta \psi$.
Starting from the relation $\phi = \beta \psi$,
we form the following octahedron:
\begin{equation}\label{eq:oct1}
\begin{gathered}
\xymatrix @R=10pt @C=18pt {
                              &                                   & \Omega k \ar[dl] \ar[dr]  &                                                                                            &             \\  
                              & M \oplus \Omega k \ar[dd] \ar[rr] &                           & M_m' \oplus
 											        K_{2q-m} \ar[dd]^-{\psi}\ar[dr]^-{\phi}  &             \\   
M  \ar[ur]^-{\gamma} \ar[dr]_{(1-xy)^m\ } &                                   &                           &                                                                                            & M\\  
                              & M \ar[rr]^-{\alpha}\ar[dr]_0       &                           & M_m' \oplus M_{2q-m}' \ar[dl]^-j\ar[ur]_-{\beta}                                             &             \\  
                              &                                   &  k                        &                                                                                            &.}
\end{gathered}
\end{equation}
Since $M$ is one-periodic, we have $\Omega M = M$ at the left, and
the octahedron defines a map $\gamma: M \to M \oplus \Omega k$.
The first component of $\gamma$ is $(1-xy)^m : M \to M$ and
the second component $M \to \Omega k$ is stably non-trivial if $m > 0$.

We can use $K_{2q-m}$ to build
the module $M((ab)^{q-\frac{m}{2}-1})$, using the triangle
\[ k \llra{\tilde{\theta}} K_{2q-m} \lra
   M((ab)^{q-\frac{m}{2}-1}) \oplus M((ba)^{q-\frac{m}{2}-1}) \lra \Sigma k,\]
where $\tilde{\theta}$ is the inclusion of the socle.
Now let $\theta : k \to M_m' \oplus K_{2q-m}$ have components $0 : k \to M_m'$ and $\tilde{\theta} : k \to K_{2q-m}$.
Then $\phi \theta = \beta \psi \theta = \beta i$ is stably trivial, and
we get another octahedron based on the relation $0 = \phi \theta$:
\begin{equation*}\label{eq:oct2}
\begin{gathered}
\xymatrix @R=10pt @C=15pt{
                              &                                   &  k \ar[dl]_-{\theta} \ar[dr]^-0  &                                                                                            &             \\  
                              & M_m' \oplus 
			        K_{2q-m} \ar[dd] \ar[rr]^-{\phi} &                           & M \ar[dd] \ar[dr]^{\Sigma \gamma} &             \\   
M \oplus \Omega k \ar[ur] \ar[dr] &                                   &                           &                                                                                            & M \oplus k\\  
                              & \hspace*{-15pt}M _m' \oplus
                                M ( (ab)^{q-\frac{m}{2}-1} )  \oplus M ((ba)^{q-\frac{m}{2}-1}) \ar[rr]\ar[dr]       &                           & M \oplus \Sigma k \ar[dl]\ar[ur]^{f}                                             &             \\  
                              &                                   &  \Sigma k                        &                                                                                            &.}
\end{gathered}
\end{equation*}
The bottom horizontal triangle
\[
   M    \oplus \Omega k \lra
   M_m' \oplus M ( (ab)^{q-\frac{m}{2}-1} )  \oplus M ((ba)^{q-\frac{m}{2}-1}) \lra
   M    \oplus \Sigma k \llra{f}
   M    \oplus k
\]
shows that $\stgl(M((ab)^s))$ and $\stgl(M((ab)^{s-1}a))$ are at most $2$
for $ \frac{q-1}{2} \leq s \leq q-1$.
For example, take $m = 0$. Then the map $(1-xy)^m$ is the identity on $M$.
Picking the right basis, we see that the map $\Sigma \gamma: M \to M \oplus k$
is the identity on $M$ and zero on $k$.
It follows that the map $f$ is the direct sum of the identity map on $M$ and
a non-trivial map $\Sigma k \to k$, and
we find $M((ab)^{q-1})$ and $M((ba)^{q-1})$ as summands of the fibre of the map $\Sigma k \to k$.

To construct the modules $M((ab)^s)$ and $M((ab)^sa)$ for $s$ small,
we consider the triangle starting with $\Sigma \gamma$:
\[M \llra{\Sigma \gamma}
   M \oplus k \lra
   M_{2q-m}' \oplus M((ba)^{-\frac{m}{2}} (ab)^{\frac{m}{2}}) \llra{\Sigma \phi} M. \]
Then we can build the modules $M ((ba)^{\frac{m}{2}})$ and $M((ab)^{\frac{m}{2}})$ from $M((ba)^{-\frac{m}{2}} (ab)^{\frac{m}{2}})$
using the triangle 
\[ \Omega k \llra{\tilde{\theta'}} M((ba)^{-\frac{m}{2}} (ab)^{\frac{m}{2}}) \lra
   M ((ba)^{\frac{m}{2}}) \oplus M((ab)^{\frac{m}{2}})
  \lra k .\]
Here the rightmost map is any map that is stably non-trivial on both summands,
and $\tilde{\theta'}$ is defined by the triangle.
Let $\theta' : \Omega k \to M_{2q-m}' \oplus M((ba)^{-\frac{m}{2}} (ab)^{\frac{m}{2}})$
have components $0$ and $\tilde{\theta'}$.
One can check that $(\Sigma \phi) \theta'$ is stably trivial and that we get a triangle
similar to the one above:
\[ M \oplus \Omega k \lra
   M \oplus k \lra
   M ( (ab)^{\frac{m}{2}} ) \oplus M ((ba)^{\frac{m}{2}}) \oplus M_{2q-m}' \lra
   M \oplus k . \]
It follows that $\stgl(M((ab)^s))$ and $\stgl(M((ab)^{s-1} a))$ are at most $2$
for $1 \leq s \leq \frac{q-1}{2}$.

The two remaining cases are $M((ab)^0) = k$
and $M((ab)^{q-1} a) \iso k \up_{\smd{1,y}}^{D_{4q}}$,
both of which have strong ghost length $1$, so we are done.
\end{proof}

We illustrate the triangles in the octahedron~\eqref{eq:oct1} as follows,
taking $q=4$ and $m=2$.
The triangle~\eqref{eq:(1-g)m} corresponds to a short exact sequence
\begin{center}
\begin{tikzpicture}[scale=0.6]
\Dstartt{0,1}
\Dnext{X}{post}{(-1,-1)}
\DY
\DX
\DYw
\Dxw
\Dy
\Dx
\Dnext{Y}{pre}{(-1,1)}
\Dend 

\draw[->,semithick] (1.5,1) -- (4,1);

\Dstartt{5.5,3}
\Dnext{X}{post}{(-1,-1)}
\DY
\DX
\DYw
\Dxw
\Dy
\Dx
\Dnext{Y}{pre}{(-1,1)}
\Dend

\draw (7,1) node {$\oplus$};

\Dstartt{8.5,5}
\Dnext{X}{post}{(-1,-1)}
\DY \DX
\DY \DX
\DY \DX
\DYw \Dxw
\Dy \Dx
\Dy \Dx
\Dy \Dx
\Dnext{Y}{pre}{(-1,1)}
\Dend

\draw[->,semithick] (10,1) -- (12.5,1) ;

\Dstartt{14,3}
\Dnext{X}{post}{(-1,-1)}
\DYw
\Dxw
\Dnext{Y}{pre}{(-1,1)}
\Dend 

\draw (15.5,2) node {$\oplus$};

\Dstartt{17,5}
\Dnext{X}{post}{(-1,-1)}
\DY \DX
\DY \DX
\DYw \Dxw
\Dy \Dx
\Dy \Dx
\Dnext{Y}{pre}{(-1,1)}
\Dend
\draw (18.5,-1) node {,};
\end{tikzpicture}
\end{center}
where a free module $kD_{16}$ has been included to make the first map into an injection.
In these diagrams, the downward-left arrows indicate the action of $X = x-1$
and the downward-right arrows indicate the action of $Y = y-1$.

And the top horizontal triangle
\[ M \llra{\gamma} M \oplus \Omega k \lra M_2' \oplus M ( (ab)^{2}ab^{-1} (ba)^{-2}) \llra{\phi} M \]
in~\eqref{eq:oct1} 
has $\Omega k$ in place of the free summand and 
corresponds to a short exact sequence
\begin{center}
\begin{tikzpicture}[scale=0.6]


\Dstartt{0,1}
\Dnext{X}{post}{(-1,-1)}
\DY
\DX
\DYw
\Dxw
\Dy
\Dx
\Dnext{Y}{pre}{(-1,1)}
\Dend 

\draw[->,semithick] (1.5,1) -- (4,1);

\Dstartt{5.5,3}
\Dnext{X}{post}{(-1,-1)}
\DY
\DX
\DYw
\Dxw
\Dy
\Dx
\Dnext{Y}{pre}{(-1,1)}
\Dend

\draw (7,1) node {$\oplus$};

\Dstartt{7.5,4}
\DY \DX
\DY \DX
\DY \DX
\DYw \Dxw
\Dy \Dx
\Dy \Dx
\Dy \Dx
\Dend

\draw[->,semithick] (10,1) -- (12.5,1);

\Dstartt{14,3}
\Dnext{X}{post}{(-1,-1)}
\DYw
\Dxw
\Dnext{Y}{pre}{(-1,1)}
\Dend 

\draw (15.5,2) node {$\oplus$};

\Dstartt{16,4}
\DY \DX
\DY \DX
\DYw \Dxw
\Dy \Dx
\Dy \Dx
\Dend
\draw (18.5,-1) node {.};


\end{tikzpicture}
\end{center}
One can check that the map $\theta: k \to M_2' \oplus M ( (ab)^{2}ab^{-1} (ba)^{-2})$
factors through the middle term,
so that $\phi \theta$ is stably trivial.

\end{document}